\newtheorem{remark}{Remark}[section]
\newcommand{\norm}[1]{\left\Vert#1\right\Vert}
\newcommand{\norml}[2]{\Vert#1\Vert_{\bL^2(#2)}}
\newcommand{\normL}[2]{\Vert{\hskip -1.3pt}\vert #1 \vert{\hskip -1.3pt}\Vert_{\bL^2(#2)}}
\newcommand{\norme}[1]{\Vert{\hskip -1.3pt}\vert #1 \vert{\hskip -1.3pt}\Vert_{DG}}
\newcommand{\abs}[1]{\bigl\vert#1\bigr\vert}
\newcommand{\pdaj}[2]{\left\langle \left\{#1\right\}, \left[#2\right]\right\rangle_e}
\newcommand{\set}[1]{\left\{#1\right\}}
\newcommand{\av}[1]{\left\{#1\right\}}
\newcommand{\jm}[1]{\left[#1\right]}
\newcommand{\T}{\mathcal{T}_h}
\newcommand{\hcsta}{\widehat{C}_{\rm sta}}
\newcommand{\db}{\displaybreak[0]}
\newcommand{\nn}{\nonumber}
\newcommand{\al}{\alpha}
\newcommand{\be}{\beta}
\newcommand{\ep}{\varepsilon}
\newcommand{\ga}{\gamma}
\newcommand{\Ga}{\Gamma}
\newcommand{\la}{\lambda}
\newcommand{\na}{\nabla}
\newcommand{\Om}{\Omega}
\newcommand{\pa}{\partial}
\newcommand{\pr}{\prime}
\newcommand{\vp}{\varphi}
\renewcommand{\i}{{\rm\mathbf i}}
\DeclareMathOperator{\re}{{Re}}
\DeclareMathOperator{\im}{{Im}}
\newcommand{\bR}{\mathbf{R}}
\newcommand{\bL}{\mathbf{L}}
\newcommand{\bH}{\mathbf{H}}
\newcommand{\bP}{\mathbf{P}}
\newcommand{\bV}{\mathbf{V}}
\newcommand{\bn}{\mathbf{n}}
\newcommand{\bw}{\mathbf{w}}
\newcommand{\p}{\partial}
\newcommand{\Ome}{\Omega}
\newcommand{\nab}{\nabla}
\newcommand{\Del}{\Delta}
\newcommand{\cT}{\mathcal{T}}
\newcommand{\cE}{\mathcal{E}}
\newcommand{\cF}{\mathcal{F}}
\newcommand{\cJ}{\mathcal{J}}
\newcommand{\Div}{{\rm div\,}}
\newcommand{\Langle}{\bigl\langle}
\newcommand{\Rangle}{\bigr\rangle}
\def\jump#1{[#1]}
\def\avrg#1{\{#1\}}
\newcommand{\cR}{\mathcal R}
\DeclareMathOperator{\ddiv}{{div}}
\DeclareMathOperator{\curl}{\mathbf{curl}}
\newcommand{\bff}{\mathbf{f}}
\newcommand{\bfe}{\mathbf{e}}
\newcommand{\bfg}{\mathbf{g}}
\newcommand{\bfu}{\mathbf{u}}
\newcommand{\bfv}{\mathbf{v}}
\newcommand{\bfw}{\mathbf{w}}
\newcommand{\bfx}{\mathbf{x}}
\newcommand{\bfz}{\mathbf{z}}
\newcommand{\bfE}{\mathbf{E}}
\newcommand{\bfH}{\mathbf{H}}
\newcommand{\bfF}{\mathbf{F}}
\newcommand{\obfF}{\overline{\bfF}}
\newcommand{\bfR}{\mathbf{R}}
\newcommand{\bfW}{\mathbf{W}}
\newcommand{\bfV}{\mathbf{V}}
\newcommand{\bfnu}{\boldsymbol{\nu}}
\newcommand{\bfxi}{\boldsymbol{\xi}}
\newcommand{\bfeta}{\boldsymbol{\eta}}
\newcommand{\bfPhi}{\boldsymbol{\Phi}}
\newcommand{\bfPsi}{\boldsymbol{\Psi}}
\newcommand{\bcV}{\boldsymbol{\mathcal{V}}}
\newcommand{\Eheem}{\mathbf{E}_h^{\mathrm{EEM}}}
\newcommand{\ls}{\lesssim}
\newcommand{\uga}{\underline{\gamma}}
\begin{document}
\title{An absolutely stable discontinuous Galerkin method for the indefinite
time-harmonic Maxwell equations with large wave number}
\markboth{X. FENG AND H. WU}{DG METHODS FOR THE MAXWELL EQUATIONS}

\author{
Xiaobing Feng\thanks{Department of Mathematics, The University of
Tennessee, Knoxville, TN 37996, U.S.A.  ({\tt xfeng@math.utk.edu}).
The work of this author was partially supported by the NSF grants DMS-0710831
and DMS-1016173.} 
\and
Haijun Wu\thanks{Department of Mathematics, Nanjing University, Jiangsu,
210093, P.R. China. ({\tt hjw@nju.edu.cn}). The work of this author was
partially supported by the National Magnetic Confinement Fusion Science Program under grant 2011GB105003 and by the NSF of China grants 10971096, 11071116, 91130004.}}



\maketitle

\begin{abstract}
This paper develops and analyzes an interior penalty discontinuous 
Galerkin (IPDG) method using piecewise linear polynomials for the indefinite
time harmonic Maxwell equations with the impedance boundary condition 
in the three dimensional space. The main novelties of the proposed IPDG method 
include the following: first, the method penalizes not only the jumps of the 
tangential component of the electric field across the element faces
but also the jumps of the tangential component of its vorticity field; second, 
the penalty parameters are taken as complex numbers of negative imaginary parts. 
For the differential problem, we prove that the sesquilinear form 
associated with the Maxwell problem satisfies a generalized 
weak stability (i.e., inf-sup condition) for star-shaped domains.
Such a generalized weak stability readily infers wave-number explicit 
a priori estimates for the solution of the Maxwell problem, which 
plays an important role in the error analysis for the IPDG method. 
For the proposed IPDG method, we show that the discrete sesquilinear form
satisfies a coercivity for all positive mesh size $h$ and wave number $k$ and 
for general domains including non-star-shaped ones. 
In turn, the coercivity easily yields the well-posedness 
and stability estimates (i.e., a priori estimates) for the discrete 
problem without imposing any mesh constraint. Based on these discrete stability 
estimates, by adapting a nonstandard error estimate technique 
of \cite{fw08a}, we derive both the 
energy-norm and the $L^2$-norm error estimates for the IPDG method 
in all mesh parameter regimes including pre-asymptotic regime 
(i.e., $k^2 h\gtrsim 1$). Numerical experiments are also presented 
to gauge the theoretical results and to numerically examine the 
pollution effect (with respect to $k$) in the error bounds.  
\end{abstract}

\begin{keywords}
Time harmonic Maxwell equations, impedance boundary condition, 
interior penalty discontinuous Galerkin methods, absolute stability, error estimates
\end{keywords}

\begin{AMS}
65N12, 
65N15, 
65N30, 
78A40  
\end{AMS}


\section{Introduction}\label{sec-1}

This paper develops and analyzes interior penalty discontinuous Galerkin (IPDG) 
methods for the following time harmonic Maxwell problem:
\begin{alignat}{2} \label{e1.1}
\curl\curl\bfE- k^2\bfE&=\bff &&\qquad\mbox{in }\Omega, \\
\curl\bfE\times\bfnu-\i \lambda \bfE_T&=\bfg 
&&\qquad\mbox{on }\Gamma:=\partial\Ome,\label{e1.2}
\end{alignat}
where $\Ome\subset\bfR^3$ is a bounded domain with
Lipschitz continuous boundary $\partial\Ome$ and of diameter $R$. 
$\bfnu$ denotes the unit outward normal to $\partial\Omega$, $\i:=\sqrt{-1}$, the
imaginary unit, and $\bfE_T=(\bfnu\times\bfE)\times\bfnu$,
the {\em tangential component} of the electric field $\bfE$. $k$, called
{\em wave number}, is a positive constant and $\lambda> 0$
is known as the impedance constant. \eqref{e1.2} is 
the standard impedance boundary condition. Assume that $\bfg\cdot\bfnu=0$,
hence, $\bfg_T=\bfg$.

Problem \eqref{e1.1}--\eqref{e1.2} is a prototypical problem in
electromagnetic scattering (cf. \cite{Colton_Kress99} and the references therein)
and has been used extensively as a model (and benchmark) problem to develop various 
numerical discretization methods including finite element methods 
\cite{Monk03,ZSWX09} and discontinuous Galerkin methods 
\cite{HMP11,HPSS05,HPS04,CLS04,NPC11}, and to develop fast 
solvers (cf. \cite{TW05} and the references therein). 
The above Maxwell problem with large wave number $k$ is numerically 
difficult to solve mainly because of the following two reasons. 
First, the large wave number $k$ implies the small wave length
$\ell:=2\pi/k$, that is, the wave is a short wave and very oscillatory.
It is well known that, in every coordinate direction, one must put some
minimal number of grid points in each wave length in order to resolve the
wave. Using such a fine mesh evidently results in a huge algebraic problem 
to solve regardless what discretization
method is used. Practically, ``the rule of thumb" is to use $6-10$ grid 
points per wave length, which means that the mesh 
size $h$ must satisfy the constraint $hk\ls 1$. To the
best of our knowledge, no numerical method in the literature has been proved to be 
uniquely solvable and to have an error bound under the mesh constraint $hk\ls 1$
for the above Maxwell problem. Moreover, numerical experiments have shown 
that under the mesh condition $hk\ls 1$ the errors of all existing numerical
methods grow as the wave number $k$ increases. This means that
the error is not completely controlled by the product $hk$ and it 
provides strong evidences of the existence of so-called ``pollution"
in the error bounds. It is known now \cite{bs00} that 
the existence of pollution is related to the loss of stability 
of numerical methods with large wave numbers for the scalar wave equation,
which is also expected to be the case for the vector wave equations.
Second, for large wave number $k$, the Maxwell operator is strongly 
indefinite. Such a strong indefiniteness certainly passes onto
any discretization of the Maxwell problem. In other words,  
the stiffness matrix of the discrete problem is not only very 
large but also strongly indefinite. Solving such a large, 
strongly indefinite, and ill-conditioned algebraic problem
is proved to be very challenging and all the well-known iterative 
methods were proved numerically to be either ineffective or divergent
for indefinite wave problems in the case of large wave number 
(cf. \cite{TW05} and the references therein).

This paper is an attempt to address the first difficulty 
mentioned above for the Maxwell equations. In particular, 
our goal is to design and analyze discretization methods which 
have superior stability properties and give optimal rates of convergence   
for the Maxwell problem. Motivated by our previous
experiences with the Helmholtz equation \cite{fw08a,fw08b}, 
we again try to accomplish the goal by developing some interior 
penalty discontinuous Galerkin method for problem
\eqref{e1.1}--\eqref{e1.2}.  The focus of the paper
is to establish the rigorous stability and error analysis 
for the proposed IPDG method, in particular, in 
the preasymptotic regime (i.e., when $k^2h\gtrsim 1$).  
For the ease of presentation and to better present ideas, we confine 
ourselves to only consider the linear element in this paper
and will discuss its high order extensions in a forthcoming paper. 
 
The remainder of this paper is organized as follows. section 
\ref{sec-2} is devoted to the study of the coercivity of the Maxwell 
operator and the wave-number explicit estimates for 
the solution of \eqref{e1.1}--\eqref{e1.2}. We show that
the sesquilinear form associated with the Maxwell problem
satisfies a generalized weak coercivity (i.e., inf-sup
condition). This coercivity in turn readily infers 
the wave-number explicit solution estimates which 
were proved in \cite{Feng10,HMP10}. We note that the proofs 
of both results given in this paper are of independent interest
and refer the reader to \cite{Feng10b} for further 
discussions in the direction. section \ref{sec-3} presents 
the construction of our IPDG method and some simple properties
of the proposed discrete sesquilinear form. section \ref{sec-4}
studies the coercivity of the discrete sesquilinear form and
derives stability estimates for the IPDG solutions. It is proved
that the discrete sesquilinear form satisfies a
coercivity for all mesh size $h>0$ and all wave number $k>0$ 
and for general domains including non-star-shaped ones, 
which is stronger than the generalized 
weak coercivity satisfied by its continuous counterpart. 
All these are possible because of the special design of the 
discrete sesquilinear form and the special property 
$\curl\curl\bfv_h=0$ (element-wise) for all piecewise 
linear functions $\bfv_h$.  
This coercivity in turn readily infers the well-posedness
and stability estimates for the discrete problem without imposing
any mesh constraint. section \ref{sec-5} devotes to the error 
analysis for the proposed IPDG method. By using the discrete 
stability estimates and adapting a nonstandard
error estimate technique of  \cite{fw08a}, 
we derive both the energy-norm and the $L^2$-norm error estimates 
for the IPDG method in all mesh parameter regimes including 
pre-asymptotic regime (i.e., $k^2 h\gtrsim 1$). Finally, 
we present some numerical experiment results in section \ref{sec-6}
to gauge the theoretical results and to numerically 
examine the pollution effect (with respect to $k$) in the error bounds.

\section{Generalized inf-sup condition and stability estimates for PDE solutions} 
\label{sec-2}
The standard space, norm and inner product notation
are adopted in this paper. Their definitions can be found in 
\cite{bs94,ciarlet78}.
In particular, $(\cdot,\cdot)_Q$ and $\langle \cdot,\cdot\rangle_\Sigma$
for $Q\subset \Ome$ and $\Sigma\subset \pa \Ome$ denote the $L^2$-inner product
on {\em complex-valued} $L^2(Q)$ and $L^2(\Sigma)$ spaces, respectively. 
For a given function space $W$, let $\bfW=(W)^3$. In particular,
$\bL^2(\Ome)=(L^2(\Ome))^3$ and $\bH^k(\Ome)=(H^k(\Ome))^3$.
We also define
\begin{align*}
\bH(\curl, \Ome)&:=\bigl\{ \bfv\in \bL^2(\Ome);\, \curl \bfv\in  \bL^2(\Ome) \bigr\},\\
\bH(\ddiv, \Ome)&:=\bigl\{ \bfv\in \bL^2(\Ome);\, \ddiv \bfv\in  L^2(\Ome) \bigr\},\\
\bH(\ddiv_0, \Ome)&:=\bigl\{ \bfv\in \bL^2(\Ome);\, \ddiv \bfv=0 \bigr\},\\
\bcV &:= \bigl\{ \bfv\in \bH(\curl, \Ome);\, \bfv_T \in \bL^2(\Gamma) \bigr\},\\
\hat{\bcV} &:= \bigl\{ \bfv\in \bH(\curl, \Ome);\, \curl \bfv\in \bH(\curl,\Ome),\, \bfv \in \bH(\curl, \Gamma) \bigr\}. 
\end{align*} 

Throughout this paper, the bold face letters are used to denote 
three-dimensional vectors or vector-valued functions, and
$C$ is used to denote a generic positive constant
which is independent of $h$ and $k$. We also use the shorthand
notation $A\lesssim B$ and $B\gtrsim A$ for the
inequality $A\leq C B$ and $B\geq CA$. $A\simeq B$ is a shorthand
notation for the statement $A\lesssim B$ and $B\lesssim A$.

We now recall the definition of star-shaped domains.

\begin{definition}\label{def1}
$Q\subset \bR^3$ is said to be a {\em star-shaped} domain with respect
to $\bfx_Q\in Q$ if there exists a nonnegative constant $c_Q$ such that
\begin{equation}\label{estar}
(\bfx-\bfx_Q)\cdot \bfnu_Q\ge c_Q \qquad \forall \bfx\in\pa Q.
\end{equation}
$Q\subset \bR^3$ is said to be {\em strictly star-shaped} if $c_Q$ is positive.
Where $\bfnu_Q$ denotes the unit outward normal to $\p Q$.
\end{definition}
Throughout this paper, we assume that $\Ome$ is a strictly star-shaped 
domain.  

Introduce the following sesquilinear form on $\bcV\times \bcV$
\begin{equation}\label{e1.3}
a(\bfu,\bfv):= (\curl\bfu,\curl\bfv)_\Ome-k^2(\bfu,\bfv)_\Ome
-\i\lambda\langle \bfu_T,\bfv_T\rangle_\Gamma,
\end{equation}
Then the weak formulation for the Maxwell system \eqref{e1.1}--\eqref{e1.2} 
is defined as seeking $\bfE\in \bcV$ such that
\begin{eqnarray}\label{e1.4}
a(\bfE,\bfv) =(\bff,\bfv)_\Ome+\langle\bfg,\bfv_T\rangle_\Gamma 
\qquad\forall\bfv\in \bcV.
\end{eqnarray}
Using the Fredholm Alternative Principle it can be
shown that problem \eqref{e1.4} has a unique solution
(cf. \cite{Colton_Kress99, Monk03}).

Note that choosing $\bfv=\nabla \psi$ with $\psi\in H^1_0(\Ome)$ shows that $
(k^2\bfE+\bff, \nabla \psi)_\Ome=0,
$
or
\begin{equation}\label{e1.6}
\ddiv (k^2\bfE+\bff)=0 \qquad\text{in }  \Ome.
\end{equation}

Next, we prove that the sesquilinear form $a(\cdot,\cdot)$ satisfies
a generalized weak coercivity which is expressed in terms of a generalized
{\em inf-sup} condition.

\begin{theorem}\label{inf-sup}
Let $\Ome\subset \mathbf{R}^3$ be a bounded star-shaped domain 
with the positive constant $c_\Ome$ and the diameter $R=\mbox{\rm dim}(\Ome)$. 
Then for any $\bfu\in \hat{\bcV}\cap \bfH(\ddiv_0,\Ome)$ there holds the following 
generalized inf-sup condition for the sesquilinear form $a(\cdot,\cdot)$:
\begin{align}\label{eq2.0a}
\sup_{\bfv\in \hat{\bcV}} \frac{ |\im a(\bfu,\bfv)|}{ \|\bfv\|_{E}} 
\,\,+\sup_{\bfv\in \hat{\bcV}} \frac{ |\re a(\bfu,\bfv)|}{ \normL{\bfv}{\Ome}} 
\geq \frac{1}{\gamma} \|\bfu\|_{E},
\end{align}
where 
\begin{align}\label{eq2.0b}
&\gamma :=\max\bigl\{4kR, M \bigr\},\qquad
M:= \frac{4R^2(k^2+\lambda^2)}{\lambda c_\Ome}, \db \\
&\normL{\bfu}{\Ome} :=\Bigl( k^2\|\bfu\|_{\bL^2(\Ome)}^2 
+  k^2 c_\Ome\|\bfu\|_{\bL^2(\Gamma)}^2 \Bigr)^{\frac12}, \db\\
&\|\bfu\|_{E} := \Bigl( k^2 \|\bfu\|_{\bL^2(\Ome)}^2+ k^2 c_\Ome \|\bfu\|_{\bL^2(\Gamma)}^2+\|\curl\bfu\|_{\bL^2(\Ome)}^2 
 +c_\Ome \|\curl\bfu\|_{\bL^2(\Gamma)}^2 \Bigr)^{\frac12}.\label{eq2.0c}
\end{align}

\end{theorem}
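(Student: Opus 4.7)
The plan is to exhibit two test functions $\bfv_1, \bfv_2 \in \hat{\bcV}$ whose pairings with $\bfu$ through $a(\cdot,\cdot)$ separately control the imaginary and real channels in \eqref{eq2.0a}. The first is the energy multiplier $\bfv_1 = \bfu$; the second is a Rellich--Morawetz-type multiplier built from the radial vector field $\bfw(\bfx) := \bfx - \bfx_\Ome$. The divergence-free assumption $\bfu \in \bfH(\ddiv_0,\Ome)$ will be used to turn $\curl\curl\bfu$ into $-\Del\bfu$ at the formal level, which is what makes the Rellich computation clean.

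\emph{Imaginary part.} With $\bfv_1 = \bfu$, the first two summands of $a(\bfu,\bfu)$ are real, so
$$\im a(\bfu,\bfu) = -\lambda\,\|\bfu_T\|_{\bL^2(\Gamma)}^2,$$
and trivially $\|\bfv_1\|_E = \|\bfu\|_E$. This directly bounds $\lambda\|\bfu_T\|_{\bL^2(\Gamma)}^2$ by the first supremum in \eqref{eq2.0a} times $\|\bfu\|_E$, and is the only identity I need for the tangential trace.

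\emph{Real part.} Take $\bfv_2$ to be a suitably symmetrized version of $2(\bfw\cdot\nabla)\bfu$, adjusted so that $\bfv_2 \in \hat{\bcV}$; this is precisely where the regularity clauses $\curl\bfu \in \bH(\curl,\Ome)$ and $\bfu \in \bH(\curl,\Gamma)$ in the definition of $\hat{\bcV}$ get used. Integrating $(\curl\bfu,\curl\bfv_2)_\Ome$ and $k^2(\bfu,\bfv_2)_\Ome$ by parts yields a Maxwell Rellich identity of the schematic form
$$
\re\,a(\bfu,\bfv_2) = \|\curl\bfu\|_{\bL^2(\Ome)}^2 + k^2\|\bfu\|_{\bL^2(\Ome)}^2 + \int_\Gamma (\bfw\cdot\bfnu)\bigl(|\curl\bfu|^2 - k^2|\bfu|^2\bigr)\,ds + \mathcal{R},
$$
where $\mathcal{R}$ collects cross-terms of the form $\int_\Gamma (\bfw\cdot\bfu_T)(\curl\bfu\times\bfnu)$ produced by the two integrations by parts. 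The star-shapedness $\bfw\cdot\bfnu \geq c_\Ome$ converts the leading boundary integral into $c_\Ome\,\|\curl\bfu\|_{\bL^2(\Gamma)}^2$, while the sign-indefinite $-k^2|\bfu|^2$ piece and the remainder $\mathcal{R}$ are absorbed by substituting the impedance condition $\curl\bfu\times\bfnu = \i\lambda\bfu_T + \bfg$ on $\Gamma$ and invoking the bound from Step~1.

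\emph{Assembly and main obstacle.} Since $\ddiv\bfu = 0$, one has $\|\nabla\bfu\|_{\bL^2(\Ome)} \lesssim \|\curl\bfu\|_{\bL^2(\Ome)}$ (up to a trace correction), hence $\|\bfv_2\|_{\bL^2(\Ome)} \lesssim R\,\|\curl\bfu\|_{\bL^2(\Ome)}$ and $\normL{\bfv_2}{\Ome} \lesssim kR\,\|\bfu\|_E$; this is the source of the factor $4kR$ in $\gamma$, while the remaining $M = 4R^2(k^2+\lambda^2)/(\lambda c_\Ome)$ emerges from balancing $\mathcal R$ against the Step~1 bound. The main technical obstacle is the Rellich calculation itself: checking that $\bfv_2$ genuinely lies in $\hat{\bcV}$, and that the numerous boundary fragments produced by the two integrations by parts reassemble cleanly, after the impedance substitution, into a positive lower bound with constants matching $\gamma$. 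The star-shaped hypothesis is essential both to sign the leading boundary term and to make the $c_\Ome$-weighted piece of $\|\bfu\|_E$ available for absorbing the tangential remainders coming out of $\mathcal R$.
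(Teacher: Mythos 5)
Your Step 1 (testing with $\bfv_1=\bfu$ to get $\im a(\bfu,\bfu)=-\lambda\|\bfu_T\|_{\bL^2(\Gamma)}^2$) is exactly the paper's first step, but your ``real part'' step contains a fatal conceptual error: you substitute the impedance boundary condition $\curl\bfu\times\bfnu=\i\lambda\bfu_T+\bfg$ on $\Gamma$ to absorb the sign-indefinite boundary terms. No such condition is available here. Theorem \ref{inf-sup} is a statement about the sesquilinear form acting on an \emph{arbitrary} $\bfu\in\hat{\bcV}\cap\bH(\ddiv_0,\Ome)$; $\bfu$ is not assumed to solve \eqref{e1.1}--\eqref{e1.2}, there is no datum $\bfg$, and any bound obtained by inserting a boundary condition would contain $\bfg$ and hence would not be the inf-sup inequality \eqref{eq2.0a} but a solution-specific stability estimate (that weaker statement could not then be invoked, as the paper does in Theorem \ref{stability}, for the auxiliary field $\bfu=\bfE-\nabla\varphi$ via the variational identity \eqref{e2.4}). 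The paper's proof uses only algebraic/integration-by-parts identities for the form itself, with the two test functions $\bfu$ and $\bfv=\curl\bfu\times\bfw$.

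Even setting that aside, your multiplier cannot produce the energy norm \eqref{eq2.0c}. For the genuine Rellich/Morawetz multiplier $2(\bfw\cdot\nabla)\bfu$ the interior terms come out with \emph{opposite} signs (schematically $-\|\curl\bfu\|_{\bL^2(\Ome)}^2+3k^2\|\bfu\|_{\bL^2(\Ome)}^2$ in three dimensions, not both positive as you wrote), and the leading boundary term is, as you wrote, $\langle\bfw\cdot\bfnu,|\curl\bfu|^2-k^2|\bfu|^2\rangle_\Gamma$ with a \emph{minus} sign on the $k^2|\bfu|^2$ piece. With that sign the full-trace term $k^2c_\Ome\|\bfu\|_{\bL^2(\Gamma)}^2$ in $\|\bfu\|_E$ is unobtainable: the only boundary quantity you control is the tangential one, $\lambda\|\bfu_T\|_{\bL^2(\Gamma)}^2$ from Step 1 (and the impedance relation, were it even admissible, also involves only $\bfu_T$), so the normal trace $\bfu\cdot\bfnu$ on $\Gamma$ is controlled by nothing; in addition, the cross terms $\mathcal R$ of the Morawetz identity involve tangential derivatives of $\bfu$ on $\Gamma$, which are likewise uncontrolled. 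There is also an admissibility gap: $\bfu\in\hat{\bcV}\cap\bH(\ddiv_0,\Ome)$ does not imply $\bfu\in\bH^1(\Ome)$ on a Lipschitz domain, so $(\bfw\cdot\nabla)\bfu$ need not even lie in $\bL^2(\Ome)$, and no unspecified ``symmetrization'' repairs this. The paper's choice $\bfv=\curl\bfu\times\bfw$ --- related to yours by the identity $\curl\bfu\times\bfw=(\bfw\cdot\nabla)\bfu+\bfu-\nabla(\bfu\cdot\bfw)$ --- is engineered precisely to avoid all three problems: it is well defined using only $\curl\bfu\in\bH(\curl,\Ome)$, i.e.\ $\bfu\in\hat{\bcV}$; combined with \eqref{eq2.3}, \eqref{eq2.5} and $\ddiv\bfu=0$ it yields \emph{positively} signed volume terms $\|\curl\bfu\|_{\bL^2(\Ome)}^2+k^2\|\bfu\|_{\bL^2(\Ome)}^2$ and \emph{positively} signed boundary terms $\langle\bfw\cdot\bfnu,|\curl\bfu|^2\rangle_\Gamma+k^2\langle\bfw\cdot\bfnu,|\bfu|^2\rangle_\Gamma$, whose star-shaped lower bounds are exactly the four pieces of $\|\bfu\|_E^2$; and its only cross terms, $\langle\bfw\times\bfu,\bfu\times\bfnu\rangle_\Gamma$ and $\im\langle\bfu_T,\bfv_T\rangle_\Gamma$, are products of controlled traces that Young's inequality and Step 1 absorb, which is where $M$ and hence $\gamma$ arise.
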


\begin{proof}
Let $\bfw:=\bfx-\bfx_\Ome$. Setting $\bfv=\bfu$ in \eqref{e1.3} and taking 
the real and imaginary parts we get
\begin{align}\label{eq2.1}
\re a(\bfu,\bfu) &=\|\curl \bfu\|_{\bL^2(\Ome)}^2 - k^2\|\bfu\|_{\bL^2(\Ome)}^2,\\
\im a(\bfu,\bfu) &=-\lambda \|\bfu_T\|_{\bL^2(\Gamma)}^2. \label{eq2.2}
\end{align}
Alternatively, setting $\bfv=\curl \bfu\times \bfw$ in \eqref{e1.3} (notice that $\bfv\in\bcV$ is a valid test function
for $\bfu\in \hat{\bcV}$),  taking the real part, and using the following integral identity (cf. \cite{Feng10})
\begin{align} \label{eq2.3}
\re (\bfu,\bfv)_\Ome + \frac12 \|\bfu\|_{\bL^2(\Ome)}^2
&+\frac12 \langle \bfw\cdot \bfnu, |\bfu|^2 \rangle_\Gamma \\
&=\re \langle \bfw\times \bfu, \bfu\times \bfnu \rangle_\Gamma
+\re (\ddiv \bfu, \bfu\cdot \bw)_\Ome \nonumber
\end{align}
and the assumption that $\ddiv \bfu=0$, we get
\begin{align}\label{eq2.4}
2\re a(\bfu,\bfv) &=2\re\bigr( \curl\bfu,\curl\bfv)_\Ome 
- 2k^2\re (\bfu,\bfv)_\Ome +2\lambda \im \langle \bfu_T,\bfv_T \rangle_\Gamma \\
&=2\re\bigr( \curl\bfu,\curl\bfv)_\Ome  + k^2 \|\bfu\|_{\bL^2(\Ome)}^2
+k^2 \langle \bfw\cdot \bfnu, |\bfu|^2 \rangle_\Gamma \nonumber\\
&\qquad
-2k^2 \re \langle \bfw\times \bfu, \bfu\times \bfnu \rangle_\Gamma
+2\lambda \im \langle \bfu_T,\bfv_T \rangle_\Gamma. \nonumber
\end{align}

From \eqref{eq2.1} and \eqref{eq2.4} and using the following integral
identity (cf. \cite{Feng10})
\begin{align}\label{eq2.5}
2\re\bigr( \curl\bfu,\curl\bfv)_\Ome 
=\|\curl \bfu\|_{\bL^2(\Ome)}^2 + \langle \bfw\cdot\bfnu, |\curl \bfu|^2 \rangle_\Gamma,
\end{align}
we have
\begin{align}\label{eq2.6}
2k^2 \|\bfu\|_{\bL^2(\Ome)}^2 &= k^2 \|\bfu\|_{\bL^2(\Ome)}^2 + k^2 \|\bfu\|_{\bL^2(\Ome)}^2\\
&=-2\re\bigr( \curl\bfu,\curl\bfv)_\Ome
   -k^2 \langle \bfw\cdot \bfnu, |\bfu|^2 \rangle_\Gamma \nonumber\\
&\qquad
+2k^2 \re \langle \bfw\times \bfu, \bfu\times \bfnu \rangle_\Gamma
-2\lambda \im \langle \bfu_T,\bfv_T \rangle_\Gamma \nonumber  \\
&\qquad
+ 2\re a(\bfu,\bfv) +\|\curl \bfu\|_{\bL^2(\Ome)}^2 - \re a(\bfu,\bfu) \nonumber \db\\
&=-\langle \bfw\cdot\bfnu, |\curl \bfu|^2 \rangle_\Gamma
-k^2 \langle \bfw\cdot \bfnu, |\bfu|^2 \rangle_\Gamma \nonumber\\
&\qquad
+2k^2 \re \langle \bfw\times \bfu, \bfu\times \bfnu \rangle_\Gamma
-2\lambda \im \langle \bfu_T,\bfv_T \rangle_\Gamma \nonumber  \\
&\qquad
+ 2\re a(\bfu,\bfv)-\re a(\bfu,\bfu)  \nonumber  \db\\
&=-\langle \bfw\cdot\bfnu, |\curl \bfu|^2 \rangle_\Gamma
-k^2 \langle \bfw\cdot \bfnu, |\bfu|^2 \rangle_\Gamma \nonumber \\
&\qquad
-2k^2 \langle \bfw\cdot \bfnu, |\bfu\times \bfnu|^2 \rangle_\Gamma
+2k^2 \re \langle \bfw_T\times \bfu, \bfu\times \bfnu \rangle_\Gamma \nonumber \\
&\qquad
-2\lambda\im\langle \bfu_T,\bfv_T \rangle_\Gamma  
+ 2\re a(\bfu,\bfv)-\re a(\bfu,\bfu). \nonumber
\end{align}
Here we have used the decomposition $\bfw=(\bfw\cdot\bfnu) \bfnu +\bfw_T$ 
to obtain the last equality.

On noting that $\langle \bfw_T\times \bfu, \bfu\times \bfnu \rangle_\Gamma 
=\langle \bfu\cdot \bfnu, \bfw_T\cdot \bfu_T \rangle_\Gamma$,
$\|\bfw\|_{L^\infty(\Ome)} \leq R$,  and that $|\bfv|\le|\curl \bfu||\bfw|$, using the star-shaped domain assumption 
and Schwarz inequality we obtain
\begin{align}\label{eq2.7}
2k^2 \|\bfu\|_{\bL^2(\Ome)}^2 
&\le -c_\Ome \|\curl \bfu\|_{\bL^2(\Gamma)}^2
-k^2 c_\Ome \|\bfu\|_{\bL^2(\Gamma)}^2 \\
&\quad
-2 k^2 c_\Ome \|\bfu_T\|_{\bL^2(\Gamma)}^2
+2k^2 R \norml{\bfu}{\Gamma}\norml{\bfu_T}{\Gamma} \nonumber \\
&\quad
+2\lambda R\norml{\bfu_T}{\Gamma}\norml{\curl \bfu}{\Gamma}  
+ 2\re a(\bfu,\bfv)-\re a(\bfu,\bfu). \nonumber \db\\
&\leq -\frac{c_\Ome}2 \|\curl \bfu\|_{\bL^2(\Gamma)}^2
-\frac{k^2 c_\Ome}{2} \|\bfu\|_{\bL^2(\Gamma)}^2
-2 k^2 c_\Ome \|\bfu_T\|_{\bL^2(\Gamma)}^2 \nonumber\\
&\quad
+\frac{2R^2(k^2+\lambda^2)}{c_\Ome} \|\bfu_T\|_{\bL^2(\Gamma)}^2
+ 2\re a(\bfu,\bfv)- \re a(\bfu,\bfu).  \nonumber
\end{align}

Finally, it follows from \eqref{eq2.1}, \eqref{eq2.2} and \eqref{eq2.7} that
\begin{align}\label{eq2.9}
&2k^2 \|\bfu\|_{\bL^2(\Ome)}^2 +2\|\curl \bfu\|_{\bL^2(\Ome)}^2
+c_\Ome \|\curl \bfu\|_{\bL^2(\Gamma)}^2+k^2 c_\Ome \|\bfu\|_{\bL^2(\Gamma)}^2  \\
&
\leq M |\im a(\bfu,\bfu)| + |\re a(\bfu,4\bfv)|, \nonumber
\end{align}
where $\bfv=\curl \bfu\times \bfw$ and $M$ is defined in \eqref{eq2.0b}.

It is easy to check that there holds for $\bfv=\curl \bfu\times \bfw$ 
\[
\normL{\bfv}{\Ome} \leq kR\|\bfu\|_{E}. 
\]
Hence, it follows from \eqref{eq2.9} that
\begin{align}\label{eq2.10}
\frac{ |\im a(\bfu,\bfu)|}{ \|\bfu\|_{E}} 
+\frac{ |\re a(\bfu,4\bfv)|}{ \normL{4\bfv}{\Ome}} 
&\geq \frac{ |\im a(\bfu,\bfu)|}{ \|\bfu\|_{E}}
+ \frac{|\re a(\bfu,4\bfv)|}{ 4kR \|\bfu\|_{E} } \\
& \geq \frac{1}{\gamma} \cdot \frac{ M|\im a(\bfu,\bfu)| + |\re a(\bfu,4\bfv)| }{\|\bfu\|_{E} }
 \geq \frac{1}{\gamma} \|\bfu\|_{E}, \nonumber
\end{align}
where $\gamma= \max\bigl\{4kR, M \bigr\}$ as defined in \eqref{eq2.0b}. 
The proof is complete.
\end{proof}

An immediate consequence of the above generalized {\em inf-sup} 
condition is the following stability estimate for solutions of 
problem \eqref{e1.1}--\eqref{e1.2}.

\begin{theorem}\label{stability}
In addition to the assumptions of Theorem \ref{inf-sup}, assume
that $\bff\in \bfH(\ddiv,\Ome)$ and $\bfg\in \bL^2(\Gamma)$.
Let $\bfE\in \hat{\bcV}\cap \bfH(\ddiv,\Ome)$ be a solution of the variational
problem \eqref{e1.4}.  Then there holds following stability estimate:
\begin{align}\label{e2.1}
\|\curl\bfE\|_{\bL^2(\Ome)} &+k\|\bfE\|_{\bL^2(\Omega)}
+\sqrt{c_\Ome}\|\curl\bfE\|_{\bL^2(\Ga)}+k\sqrt{c_\Ome}\|\bfE\|_{\bL^2(\Gamma)} \\
&\lesssim k^{-1} \gamma M(\bff,\bfg) + k^{-2}\|\ddiv \bff\|_{\bL^2(\Ome)}   \nonumber
\end{align}
for all $k,\lambda> 0$. Where
\begin{align}\label{e2.2}
M(\bff,\bfg) &:= \|\bff\|_{\bL^2(\Ome)} + c_\Ome^{-\frac12}\|\bfg\|_{\bL^2(\Gamma)}.
\end{align}
\end{theorem}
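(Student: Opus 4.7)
My plan is to apply the generalized inf-sup condition of Theorem \ref{inf-sup} to the weak formulation \eqref{e1.4}, after first removing the (forced, nonzero) divergence of $\bfE$ implied by \eqref{e1.6}. Since $\ddiv\bfE = -k^{-2}\ddiv\bff$, I would let $\phi \in H_0^1(\Ome)$ be the weak solution of $-\Delta\phi = k^{-2}\ddiv\bff$ and set $\bfE_0 := \bfE - \nabla\phi$. The boundary condition $\phi|_\Gamma = 0$ makes $(\nabla\phi)_T = \mathbf{0}$ and $\curl\nabla\phi = \mathbf{0}$, so $\bfE_0 \in \hat\bcV \cap \bH(\ddiv_0,\Ome)$ with $\curl\bfE_0 = \curl\bfE$ and $(\bfE_0)_T = \bfE_T$ on $\Gamma$. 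Standard energy estimates for $\phi$ give both $\|\nabla\phi\|_{\bL^2(\Ome)} \le k^{-2}\|\bff\|_{\bL^2(\Ome)}$ (by testing against $\bff$ after integration by parts) and $\|\nabla\phi\|_{\bL^2(\Ome)} \lesssim k^{-2}\|\ddiv\bff\|_{\bL^2(\Ome)}$ (via Poincar\'e), and these are what will give rise to the $k^{-2}\|\ddiv\bff\|_{\bL^2(\Ome)}$ summand on the right-hand side of \eqref{e2.1}.

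Next, since $\curl\nabla\phi = \mathbf{0}$ and $(\nabla\phi)_T = \mathbf{0}$ force $a(\nabla\phi,\bfv) = -k^2(\nabla\phi,\bfv)_\Ome$, subtracting from \eqref{e1.4} gives
\begin{equation*}
a(\bfE_0,\bfv) = (\bff,\bfv)_\Ome + \langle\bfg,\bfv_T\rangle_\Gamma + k^2(\nabla\phi,\bfv)_\Ome \qquad \forall\,\bfv \in \hat\bcV.
\end{equation*}
The continuous embeddings $\|\bfv\|_{\bL^2(\Ome)} \le k^{-1}\|\bfv\|_{E}$ and $\|\bfv_T\|_{\bL^2(\Gamma)} \le (k\sqrt{c_\Ome})^{-1}\|\bfv\|_{E}$ (and the analogous statements against $\normL{\bfv}{\Ome}$) are baked straight into definitions \eqref{eq2.0b}--\eqref{eq2.0c}; combined with $k^2\|\nabla\phi\|_{\bL^2(\Ome)} \le \|\bff\|_{\bL^2(\Ome)} \le M(\bff,\bfg)$ they show each right-hand-side term to be bounded by $k^{-1}M(\bff,\bfg)\|\bfv\|_{E}$ and by $k^{-1}M(\bff,\bfg)\normL{\bfv}{\Ome}$. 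Substituting into the inf-sup \eqref{eq2.0a} yields $\|\bfE_0\|_{E} \lesssim \gamma\,k^{-1}M(\bff,\bfg)$; the triangle inequality $\bfE = \bfE_0 + \nabla\phi$ together with Step~1 then recovers \eqref{e2.1}.

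The one point I expect to require genuine care is the boundary piece $k\sqrt{c_\Ome}\|\bfE\|_{\bL^2(\Gamma)}$ on the left-hand side of \eqref{e2.1}: because $(\nabla\phi)_T = \mathbf{0}$ the tangential part of $\bfE$ is controlled immediately by $\|\bfE_0\|_{E}$, but the normal trace $\bfE\cdot\bfnu$ is only assured a priori in $H^{-1/2}(\Gamma)$ from $\bfE \in \bH(\ddiv,\Ome)$. Upgrading it to $\bL^2(\Gamma)$ should combine the regularity $\bfE \in \hat\bcV$ with the Sobolev-type embedding for $\bH(\curl,\Ome)\cap\bH(\ddiv,\Ome)$ with $\bL^2$ tangential trace on star-shaped Lipschitz domains, together with the divergence constraint $\ddiv\bfE = -k^{-2}\ddiv\bff$; the delicate task is to route the resulting contribution into the $k^{-2}\|\ddiv\bff\|_{\bL^2(\Ome)}$ summand of \eqref{e2.1} without picking up an extra $\gamma$ prefactor.
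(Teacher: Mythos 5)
Your plan reproduces the paper's own strategy almost verbatim: the corrector $\nabla\phi$ is exactly the paper's $\bfF=\nabla\varphi$ with $\Delta\varphi=-k^{-2}\ddiv\bff$, your reduced equation for $\bfE_0$ is the paper's \eqref{e2.4}, and your application of the inf-sup condition giving $\|\bfE_0\|_{E}\lesssim k^{-1}\gamma M(\bff,\bfg)$ is the paper's \eqref{e2.7}--\eqref{e2.8}. However, there is a genuine gap at precisely the point you flag and then leave open: the term $k\sqrt{c_\Ome}\|\bfE\|_{\bL^2(\Gamma)}$ in \eqref{e2.1} requires an $\bL^2(\Gamma)$ bound on $\nabla\phi$ (its tangential part vanishes, but its normal component $\partial_{\bfnu}\phi$ does not), and neither of your two interior energy estimates says anything about that trace. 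Your proposed repair --- an embedding theorem for $\bH(\curl,\Ome)\cap\bH(\ddiv,\Ome)$ fields with $\bL^2$ tangential trace, applied to $\bfE$ itself --- is not carried out, and it is not the right tool here: such embeddings carry constants with unspecified domain dependence and offer no mechanism for producing the weight $c_\Ome$ and the factor $k^{-2}$ in front of $\|\ddiv\bff\|_{\bL^2(\Ome)}$ that the statement demands.

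The paper closes this gap with an elementary Rellich-identity argument on the Poisson problem itself: testing $\Delta\varphi=-k^{-2}\ddiv\bff$ against $\nabla\varphi\cdot\bfw$ with $\bfw=\bfx-\bfx_\Ome$, integrating by parts, and using $(\nabla\varphi)_T=0$ together with star-shapedness ($\bfw\cdot\bfnu\ge c_\Ome$) yields
\begin{equation*}
\|\nabla\varphi\|_{\bL^2(\Ome)}^2+c_\Ome\|\nabla\varphi\|_{\bL^2(\Gamma)}^2
\le 2k^{-4}R\,\|\ddiv\bff\|_{\bL^2(\Ome)}\|\bff\|_{\bL^2(\Ome)},
\end{equation*}
which is the paper's \eqref{e2.6}. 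This single estimate settles both the boundary and the interior contributions of the corrector with the correct constants: multiplying its square root by $k$ and applying Young's inequality gives $2R\|\bff\|_{\bL^2(\Ome)}+(2k)^{-2}\|\ddiv\bff\|_{\bL^2(\Ome)}$, and the first summand is absorbed into $k^{-1}\gamma M(\bff,\bfg)$ because $\gamma\ge 4kR$. Note that your interior bookkeeping needs the same geometric-mean trick: the Poincar\'e bound alone gives $k\|\nabla\phi\|_{\bL^2(\Ome)}\lesssim k^{-1}\|\ddiv\bff\|_{\bL^2(\Ome)}$, a full factor of $k$ worse than the claimed $k^{-2}\|\ddiv\bff\|_{\bL^2(\Ome)}$, while the bound $k\|\nabla\phi\|_{\bL^2(\Ome)}\le k^{-1}\|\bff\|_{\bL^2(\Ome)}$ cannot simply be absorbed into $k^{-1}\gamma M(\bff,\bfg)$ since $\gamma$ need not be bounded below for all $k,\lambda>0$; you must multiply your two interior estimates together and take square roots (or just reproduce the Rellich bound) to land on \eqref{e2.1}.
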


\begin{proof}
Let $\varphi \in H^1_0(\Ome)$ solve 
\begin{equation}\label{e2.3}
\Delta \varphi = -k^{-2} \ddiv \bff \qquad \mbox{in } \Ome.
\end{equation}
Set $\bfF=\nab \varphi$ and $\bfu=\bfE-\bfF$, where $\bfE$ is a solution 
to \eqref{e1.4}. Trivially, we have $\curl \bfF=0$ and $\ddiv \bfF= - k^{-2} \ddiv \bff$
in $\Ome$, and $\bfF_T= \nab_T \varphi =0$ on $\Gamma$.
By \eqref{e1.6} we also have $\ddiv\bfu=\ddiv(\bfE-\bfF)=0$. 
Hence, $\bfu\in\bH(\ddiv_0,\Ome)$. Moreover, since $\bfE$ 
satisfies \eqref{e1.4}, it is easy to verify that $\bfu$ satisfies
\begin{align}\label{e2.4}
a(\bfu,\bfv)=(\bff+k^2\bfF,\bfv)_\Ome + \langle \bfg,\bfv_T\rangle_\Gamma
\qquad \forall \bfv\in \bcV.
\end{align}

Testing \eqref{e2.3} by $\varphi$ and integrating by parts on both sides 
of the resulting equation yield
\[
\|\nab \varphi\|_{\bL^2(\Ome)}^2 = -k^{-2} (\bff, \nab\varphi)_\Ome
\leq k^{-2} \|\bff\|_{\bL^2(\Ome)} \|\nab \varphi\|_{\bL^2(\Ome)}.
\] 
Hence,
\begin{equation}\label{e2.5}
\|\bfF\|_{\bL^2(\Ome)}=\|\nab \varphi\|_{\bL^2(\Ome)}\leq k^{-2} \|\bff\|_{\bL^2(\Ome)}.
\end{equation}
Alternatively, testing \eqref{e2.3} by $\nab\varphi\cdot\bfw =\bfF\cdot \bfw$ 
with $\bfw=\bfx-\bfx_\Ome$, using the following Rellich identity for the Laplacian 
(cf. \cite{Rellich40, Cummings_Feng06}):
\[
2\re (\Del \varphi\, \nab\overline{\varphi}\cdot\bfw) 
= |\nab \varphi|^2 + 2\re \bigl(\ddiv(\nab \varphi\, \nab \overline{\varphi}\cdot \bfw)\bigr)
-\ddiv( \bfw |\nab \varphi|^2 ),
\]
and integrating by parts we get (note that $\bfF_T=0$)
\begin{align*}
-2 k^{-2} (\ddiv \bff, \bfF\cdot \bfw)_\Ome 
&=\|\bfF\|_{\bL^2(\Ome)}^2 
+2\re \langle \bfF\cdot \bfnu, \bfF\cdot\bfw\rangle_\Gamma
- \langle \bfw\cdot \bfnu, |\bfF|^2 \rangle_\Gamma \\
&=\|\bfF\|_{\bL^2(\Ome)}^2 + \langle \bfw\cdot \bfnu, |\bfF|^2 \rangle_\Gamma.  
\end{align*}
Hence, by \eqref{e2.5} and the star-shaped domain assumption we obtain
\begin{align}\label{e2.6}
\|\bfF\|_{\bL^2(\Ome)}^2+c_\Ome \|\bfF\|_{\bL^2(\Gamma)}^2 
&\leq 2 k^{-2}\|\bfw\|_{L^\infty(\Ome)} \|\ddiv \bff\|_{\bL^2(\Ome)} 
\|\bfF\|_{\bL^2(\Ome)} \\
&\leq 2 k^{-4} R\, \|\ddiv \bff\|_{\bL^2(\Ome)}\norml{\bff}{\Ome}.\nn
\end{align}

Finally, by \eqref{e2.5} and Schwarz inequality we get
\begin{align}\label{e2.7}
\bigl| (\bff+k^2\bfF,\bfv)_\Ome + \langle \bfg,\bfv_T\rangle_\Gamma \bigr|
&\leq 2k^{-1} M(\bff,\bfg)\Bigl( k^2 \|\bfv\|_{\bL^2(\Ome)}^2 
+ k^2 c_\Ome \|\bfv_T\|_{\bL^2(\Gamma)}^2 \Bigr)^{\frac12} \\
&\leq 2k^{-1} M(\bff,\bfg)\,\normL{\bfv}{\Ome}. \nonumber
\end{align}
It follows from the generalized inf-sup condition \eqref{eq2.0a},
\eqref{e2.4} and \eqref{e2.7} that
\begin{align}\label{e2.8}
\gamma^{-1} \|\bfu\|_{E} \leq  4k^{-1} M(\bff,\bfg),
\end{align}
which together with \eqref{e2.6} and the relation $\bfu=\bfE-\bfF$ as well
as the definition of the energy norm $\|\bfu\|_{E}$ infer that
(again, note that $\bfF_T=0$)
\begin{align*}
\|\bfE\|_{E} &\leq \|\bfu\|_{E} + \|\bfF\|_{E} \\
&\leq  4k^{-1} \gamma M(\bff,\bfg) + k\big(\|\bfF\|_{\bL^2(\Ome)}^2+c_\Ome \|\bfF\|_{\bL^2(\Gamma)}^2\big)^{\frac12}  \\
&\leq  4k^{-1} \gamma M(\bff,\bfg) 
+ 2R \|\bff\|_{\bL^2(\Ome)} + (2k)^{-2}\,\|\ddiv \bff\|_{\bL^2(\Ome)}.
\end{align*}
Hence, \eqref{e2.1} holds. The proof is complete.
\end{proof}

We conclude this section with a few remarks.

\begin{remark}
Since problem \eqref{e1.1}--\eqref{e1.2} is linear,
the stability estimate \eqref{e2.1} immediately implies
the uniqueness of the problem in the function class
in which the estimate is derived. This provides an
alternative method (to the traditional integral equation
method and the unique continuation method) for establishing
uniqueness (and existence) for the Maxwell problem \eqref{e1.1}--\eqref{e1.2}.
\end{remark}

\begin{remark}
(a) The generalized inf-sup condition \eqref{eq2.0a} is 
a stronger result than a stability estimate for the solution of 
the Maxwell problem. The reason to restrict $\bfu\in \bH(\ddiv_0,\Ome)$ 
in \eqref{eq2.0a} is that $\curl$ operator has a non-trivial kernel.

(b) Stability estimates similar to \eqref{e2.1} were established 
independently early in \cite{Feng10} and \cite{HMP10}. 
\eqref{e2.1} also explicitly shows the dependence on the {\em size} 
and the {\em shape constant} of the domain. 
Such an estimate plays an important role for designing multilevel Schwarz 
preconditioners for discretizations of \eqref{e1.4} and for doing practical 
simulations because in practice the size of the computational domain $\Ome$ is
often taken to be proportional to the wave length.

In addition, not only the sharp wave number-explicit and domain size-explicit 
stability estimate \eqref{e2.1} is obtained as a corollary of the 
generalized inf-sup condition \eqref{eq2.0a}, but also the derivation
reveals some deep insights about the dependence of the solution on 
the datum functions and the domain.

(c) The generalized inf-sup condition \eqref{eq2.0a} provides a guideline 
for constructing ``good" numerical schemes for the Maxwell equations. 
We shall call a discretization method ``a coercivity preserving 
method" if it satisfies a discrete inf-sup condition which mimics
the continuous inf-sup condition. Constructing such a coercivity preserving
IPDG method is one of primary goals of this paper. 

(d) Generalized inf-sup conditions similar to \eqref{eq2.0a} also hold for
the scalar Helmholtz equation and the elastic Helmholtz equations 
(cf. \cite{Feng10b}).
\end{remark}

Based on the above stability estimates in lower norms, one can also derive 
stability estimates in higher norms when the solution $\bfE$ is sufficient 
regular. We state an $H^\delta$-estimate for $\curl\bfE$ below without 
giving a proof (cf. \cite[Remark 4.9]{HMP10}).

\begin{theorem}\label{high_stability}
Suppose that $\Div\bff =0$ and  the solution $\bfE$ of problem \eqref{e1.1}--\eqref{e1.2}
satisfies $\bfE\in \bfH^\delta(\curl,\Ome)$ for $\frac12< \delta\leq 1$. 
Then there holds estimate
\begin{align}\label{e2.100a}
\|\bfE\|_{\bfH^\delta(\curl,\Ome)} \ls (1+\lambda+k) M(\bff,\bfg)
+ \|\bfg\|_{H^{\frac12}(\Ga)},
\end{align}
where
\begin{align}\label{e2.100b}
\bfH^\delta(\curl,\Ome) &:=\bigl\{\bfu\in \bfH^\delta(\Ome);\, 
\curl\bfu\in \bfH^\delta(\Ome) \bigr\}, \\
\|\bfu\|_{\bfH^\delta(\curl,\Ome)} &:=\Bigl( \|\bfu\|_{\bfH^\delta(\Ome)}^2 
+\|\curl \bfu\|_{\bfH^\delta(\Ome)}^2 \Bigr)^{\frac12}.\label{e2.100c}
\end{align}

\end{theorem}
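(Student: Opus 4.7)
The plan is to bootstrap from the $\bL^2$-stability bound of Theorem \ref{stability} (specialised to $\Div\bff=0$) to $\bfH^\delta$-regularity by applying a classical regularity shift for vector fields in $\bH(\curl,\Ome)\cap\bH(\Div_0,\Ome)$ with controlled tangential trace, first to $\bfE$ itself and then to $\bfU:=\curl\bfE$. The baseline $\bL^2$-estimate follows from \eqref{e2.1} after noting that $\Div\bff=0$ also forces $\Div\bfE=0$ via \eqref{e1.6}, and gives
\[
\|\curl\bfE\|_{\bL^2(\Ome)}+k\|\bfE\|_{\bL^2(\Ome)}\ls (1+k+\lambda)\,M(\bff,\bfg).
\]

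For the $\bfH^\delta$-bound on $\bfE$, since $\bfE\in\bH(\curl,\Ome)\cap\bH(\Div_0,\Ome)$, the standard regularity shift for such fields yields, for $\tfrac12<\delta\le 1$,
\[
\|\bfE\|_{\bfH^\delta(\Ome)}\ls \|\bfE\|_{\bL^2(\Ome)}+\|\curl\bfE\|_{\bL^2(\Ome)}+\|\bfE\times\bfnu\|_{H^{\delta-\frac12}(\Ga)},
\]
with the boundary smoothness needed for this shift implicitly encoded in the hypothesis $\bfE\in\bfH^\delta(\curl,\Ome)$. Rewriting the impedance condition \eqref{e1.2} as $\i\lambda\bfE_T=\curl\bfE\times\bfnu-\bfg$ lets one control the boundary term, producing a bound of the form $(1+k+\lambda)M(\bff,\bfg)+\|\bfg\|_{H^{1/2}(\Ga)}$ for $\|\bfE\|_{\bfH^\delta(\Ome)}$.

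For the $\bfH^\delta$-bound on $\curl\bfE$, I set $\bfU:=\curl\bfE$ and use \eqref{e1.1}--\eqref{e1.2} to observe that $\curl\bfU=k^2\bfE+\bff\in\bL^2(\Ome)$, $\Div\bfU=0$, and $\bfU\times\bfnu=\i\lambda\bfE_T+\bfg$ on $\Ga$. Applying the same regularity shift to $\bfU$ and controlling its tangential trace by the trace theorem (which requires $\delta>\tfrac12$ to map $\bfE\in\bfH^\delta(\Ome)$ into $H^{1/2}(\Ga)$), one obtains
\[
\|\curl\bfE\|_{\bfH^\delta(\Ome)}\ls \|\curl\bfE\|_{\bL^2(\Ome)}+k^2\|\bfE\|_{\bL^2(\Ome)}+\|\bff\|_{\bL^2(\Ome)}+\lambda\|\bfE\|_{\bfH^\delta(\Ome)}+\|\bfg\|_{H^{1/2}(\Ga)}.
\]
Substituting the preceding $\bfH^\delta$-bound on $\bfE$ and the baseline $\bL^2$-bounds---and crucially pairing the $k^2\|\bfE\|_{\bL^2}$ term with the $k$-weighted baseline $k\|\bfE\|_{\bL^2}\ls(1+k+\lambda)M(\bff,\bfg)$ so as not to lose an extra factor of $k$---yields the claimed prefactor $(1+\lambda+k)M(\bff,\bfg)+\|\bfg\|_{H^{1/2}(\Ga)}$.

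The main obstacle is the regularity-shift step itself: on a general Lipschitz domain one only obtains $\delta=\tfrac12$, so reaching $\delta>\tfrac12$ tacitly demands a smoother boundary (e.g., piecewise $C^{1,1}$ or convexity), which is the setting of \cite[Remark 4.9]{HMP10}. A secondary but essential subtlety is the wave-number bookkeeping throughout---in particular the pairing of the $k^2\bfE$ source term in $\curl\bfU$ with the $k$-weighted baseline estimate on $\bfE$---so that no superfluous power of $k$ appears in the final $(1+\lambda+k)$ prefactor.
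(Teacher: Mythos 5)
The paper offers no proof of this theorem at all: it is stated ``without giving a proof'' and deferred to \cite[Remark 4.9]{HMP10}, so your attempt has to stand on its own. It does not, because there is a genuine circularity at its core. The two regularity shifts you invoke are coupled through the impedance condition in a way that substitution cannot decouple. To bound $\|\bfE\|_{\bfH^\delta(\Ome)}$ you need $\|\bfE\times\bfnu\|_{H^{\delta-\frac12}(\Ga)}$; since $\delta>\frac12$ this is a \emph{positive}-order boundary norm, which is not controlled by the $\bL^2(\Ga)$ trace bounds that Theorem \ref{stability} provides. Rewriting the boundary condition as $\i\lambda\bfE_T=\curl\bfE\times\bfnu-\bfg$ merely trades it for $\|\curl\bfE\times\bfnu\|_{H^{\delta-\frac12}(\Ga)}$, which by the trace theorem requires $\|\curl\bfE\|_{\bfH^\delta(\Ome)}$ --- exactly the quantity you bound only in the next step; and that step in turn estimates $\|\bfU\times\bfnu\|_{H^{\delta-\frac12}(\Ga)}\le \lambda\|\bfE_T\|_{H^{\delta-\frac12}(\Ga)}+\|\bfg\|_{H^{\delta-\frac12}(\Ga)}\ls \lambda\|\bfE\|_{\bfH^\delta(\Ome)}+\|\bfg\|_{H^{\frac12}(\Ga)}$. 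Writing $X=\|\bfE\|_{\bfH^\delta(\Ome)}$ and $Y=\|\curl\bfE\|_{\bfH^\delta(\Ome)}$, your two steps have the form $X\le C_1(\cdots+\lambda^{-1}Y)$ and $Y\le C_2(\cdots+\lambda X)$; substituting one into the other gives $X\le\cdots+C_1C_2X$, where the factors of $\lambda$ cancel exactly (the boundary condition is used once in each direction) and $C_1C_2$ is a product of embedding and trace constants with no smallness. There is no absorption mechanism, so the argument does not close for any $\delta>\frac12$. It is telling that your scheme \emph{does} work at the excluded endpoint $\delta=\frac12$: there the div--curl embeddings need only $\bL^2(\Ga)$ traces, and Theorem \ref{stability} supplies precisely $\|\bfE\|_{\bL^2(\Ga)}$ and $\|\curl\bfE\|_{\bL^2(\Ga)}$. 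The failure occurs exactly on the range $\frac12<\delta\le1$ the theorem claims. A correct argument needs a regularity shift for the impedance boundary-value problem treated as a single elliptic system (curl--curl with a Robin-type condition), established by localization and boundary flattening or quoted from the literature, which is what \cite{HMP10} and \cite{gm11} do --- not two div--curl embeddings glued together by the boundary condition.

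A secondary issue is the wave-number bookkeeping you flag as ``crucial.'' Pairing $k^2\|\bfE\|_{\bL^2(\Ome)}\le k\bigl(k\|\bfE\|_{\bL^2(\Ome)}\bigr)$ with the baseline $k\|\bfE\|_{\bL^2(\Ome)}\ls(1+k+\lambda)M(\bff,\bfg)$ yields $k(1+k+\lambda)M(\bff,\bfg)$, which contains a $k^2$ term and is \emph{not} $\ls(1+k+\lambda)M(\bff,\bfg)$ in general. The claimed prefactor is only recovered when $k/\lambda\ls1$, i.e.\ in the regime $\lambda\gtrsim k$ (as in the paper's experiments, where $\lambda=k$, and in \cite{HMP10}, where the impedance coefficient is proportional to the wave number); for $\lambda=O(1)$ your route loses a full factor of $k$.
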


\section{Formulation of discontinuous Galerkin methods}\label{sec-3}
To formulate our IPDG methods, we first need to introduce some notation.
Let $\{\cT_h\}$ be a family of partitions (into tetrahedrons 
and/or parallelepipeds) 
of the domain $\Ome$ parameterized by $h>0$. For any ``element"
$K\in \cT_h$, we define $h_K:=\mbox{diam}(K)$. Similarly, for each 
face $\cF$ of $K\in \cT_h$, define $h_\cF:=\mbox{diam}(\cF)$.
We assume that the elements of $\cT_h$ satisfy the minimal angle 
condition. Let
\begin{eqnarray*}
\cE_h^I&:=& \mbox{ set of all interior faces of $\cT_h$} ,\\
\cE_h^B&:=& \mbox{ set of all boundary faces of $\cT_h$ on $\Ga=\p\Ome$}.
\end{eqnarray*}
We define the jump $\jump{\bfv}$ and average $\avrg{\bfv}$ of $\bfv$ on an interior face
$\cF=\p K\cap \p K^\pr$ as
\[
\jump{\bfv}|_{\cF}:=\left\{\begin{array}{l}
   \bfv|_{K}-\bfv|_{K^\pr}, \mbox{ if the global label of $K$ is bigger},\\
   \bfv|_{K^\pr}-\bfv|_{K}, \mbox{ if the global label of $K^\pr$ is bigger},
\end{array} \right.\;\; \avrg{\bfv}|_{\cF}:=\frac12\bigl( \bfv|_{K}+ \bfv|_{K^\pr} \bigr).
\]
If $\cF\in\cE_h^B$, set $\jump{\bfv}|_{\cF}=\bfv|_{\cF}$ and $\avrg{\bfv}|_{\cF}=\bfv|_{\cF}$. For every
$\cF=\p K\cap \p K^\pr\in\cE_h^I$, let $\bfnu_\cF$ be the unit outward normal
to the face $\cF$ of the element $K$ if the global label of $K$ is bigger
and of the element $K^\pr$ if the other way around. For every 
$\cF\in\cE_h^B$, let $\bfnu_\cF=\bfnu$ the unit outward normal to $\pa\Om$.

To formulate our IPDG methods, we recall the following (local) integration 
by parts formula:
\begin{align}\label{e3.1}
(\curl\bfE,\bfF)_K =(\bfE,\curl\bfF)_K 
- \langle \bfE\times \bfnu_K, \bfF_T\rangle_{\p K}.
\end{align}
where $\bfF_T=(\bfnu_K\times \bfF)\times \bfnu_K$.

Next, multiplying equation \eqref{e1.1} by a 
test function $\obfF$, integrating over $K\in \cT_h$,
using the integration by parts formula \eqref{e3.1}, and 
summing the resulted equation over all $K\in \cT_h$ we get 
\begin{align}\label{e3.2}
\sum_{K\in\cT_h} \bigl( (\curl \bfE,\curl\bfF)_K 
- \langle\curl \bfE\times \bfnu_K, \bfF_T\rangle_{\p K} \bigr)
-k^2 (\bfE,\bfF)_\Ome= (\bff,\bfF)_\Ome.
\end{align} 
To deal with the boundary terms in the big sum, we appeal
to the following algebraic identity. For each interior 
face $\cF=K\cap K'\in \cE_h^I$ there holds
\begin{align}\label{e3.3}
\langle\curl \bfE\times \bfnu_K, \bfF_T\rangle_{\cF}
&+\langle\curl \bfE\times \bfnu_{K'}, \bfF_T\rangle_{\cF} \\
&=\big\langle \jump{\curl \bfE\times\bfnu_{\cF}},\avrg{\bfF_T}\bigr\rangle_{\cF}
+\big\langle \avrg{\curl \bfE\times\bfnu_{\cF}},\jump{\bfF_T}\bigr\rangle_{\cF}.
\nonumber
\end{align}
Substituting identity \eqref{e3.3} into \eqref{e3.2}
after dropping the first term on the right-hand side of \eqref{e3.3}
(because $\jump{\curl \bfE\times \bfnu_{\cF}}|_{\cF}=0$ if $\bfE$ is 
sufficiently regular) yields
\begin{align*}
\sum_{K\in\cT_h} (\curl \bfE,\curl\bfF)_K 
&-\sum_{\cF\in \cE_h^I} \big\langle \avrg{\curl \bfE\times \bfnu_\cF}, 
\jump{\bfF_T}\bigr\rangle_{\cF}\\ 
&-\big\langle \curl \bfE\times \bfnu, 
\bfF_T\bigr\rangle_{\Ga} -k^2 (\bfE,\bfF)_\Ome= (\bff,\bfF)_\Ome,
\end{align*}
 Utilizing the boundary condition \eqref{e1.2} in the third term 
on the left-hand side and adding a ``symmetrization" term then lead 
to the following equation:
\begin{align}\label{e3.4}
&\sum_{K\in\cT_h} (\curl \bfE,\curl\bfF)_K 
-\sum_{\cF\in \cE_h^I} \Bigl( \big\langle \avrg{\curl \bfE\times \bfnu_\cF},
\jump{\bfF_T}\bigr\rangle_{\cF}  \\
+&\epsilon \big\langle \jump{\bfE_T},\avrg{\curl\bfF\times \bfnu_\cF}\bigr\rangle_{\cF}
\Bigr)-\i\lambda \langle \bfE_T, \bfF_T\rangle_{\Ga}
-k^2 (\bfE,\bfF)_\Ome
=(\bff,\bfF)_\Ome + \langle\bfg, \bfF_T\rangle_{\Ga} \nonumber
\end{align}
where $\epsilon=-1, 0, 1$.

The most important and tricky issue for designing an IPDG method is 
how to introduce suitable {\em interior penalty} term(s) 
on the left-hand side of \eqref{e3.4}. Obviously, different 
interior penalty terms will result in different numerical methods. 
As it was proved in \cite{HPSS05}, using the standard interior
penalty terms will lead to IPDG methods which require 
a restrictive mesh constraint to ensure the stability and 
accuracy in the case of large wave number $k$. Inspired 
by our previous work \cite{fw08a} on IPDG methods for the Helmholtz
equation and guided by our stability analysis 
(see section \ref{sec-4}), here we introduce some non-standard 
interior penalty terms into \eqref{e3.4}, which we shall describe 
below, and the IPDG method so constructed will be proved
to be absolutely stable (with respect to wave number $k$ and
mesh size $h$) in the next section. 

To define our IPDG methods, we first introduce
the ``energy" space $\bV$ and the sesquilinear
form $b_h^\epsilon(\cdot,\cdot)$ on $\bV\times \bV$ as follows.
\begin{align*}
\bV&:=\prod_{K\in\cT_h} \bV_K,\quad \bV_K:=\bigl\{ \bfv\in \bH(\curl,K);\, \bfv|_{\p K} \in \bL^2(\partial K),\, 
\curl \bfv|_{\p K} \in \bL^2(\p K) \bigr\}.
\end{align*}
\begin{align}\label{eah}
b_h^\epsilon(\bfu,\bfv):=&\sum_{K\in\cT_h} (\curl\bfu,\curl\bfv)_K 
 \\
&-\sum_{\cF\in\cE_h^I} \Bigl( \big\langle \avrg{\curl \bfu\times \bfnu_\cF},
\jump{\bfv_T}\bigr\rangle_{\cF} +\epsilon\big\langle \jump{\bfu_T},\avrg{\curl\bfv\times \bfnu_\cF}\bigr\rangle_{\cF}
\Bigr)\nn\\
&- \i \bigl(\cJ_0(\bfu,\bfv) 
+\cJ_1(\bfu,\bfv) \bigr), \nn\db\\
\cJ_0(\bfu,\bfv):=&\sum_{\cF\in\cE_h^I}\frac{\ga_{0,\cF}}{h_\cF}\, 
\bigl\langle \jump{\bfu_T},\jump{\bfv_T}\bigr\rangle_\cF,\label{e3.6}\db\\
\cJ_1(\bfu,\bfv):=&\sum_{\cF\in\cE_h^I} \ga_{1,\cF} h_\cF
\bigl\langle\jump{\curl\bfu\times\bn_\cF},\jump{\curl\bfv\times\bn_\cF}
\bigr\rangle_\cF, \label{cJ1}
\end{align}
where $\ga_{0,\cF}$ and $\ga_{1,\cF}$ are nonnegative numbers to be specified later.

\begin{remark}
(a) Clearly, $b_h^\epsilon(\cdot,\cdot)$ is a consistent discretization for
$\curl\curl$ since $(\curl\curl \bfu,\bfv)_\Ome = b_h^\epsilon(\bfu,\bfv)$ 
for all $\bfu\in \bH^2(\Ome)$ and $\bfv\in \bV$ with $\bfv_T|_\Gamma=0$.

(b) The terms in $-\i\bigl(\cJ_0(\bfu,\bfv)+\cJ_1(\bfu,\bfv) \bigr)$ 
are called penalty terms.
The penalty parameters  $-\i\gamma_{0,\cF}$ and $-\i\gamma_{1,\cF}$ are
pure imaginary numbers with {\em negative} imaginary parts. Our analysis still applies if they are taken as complex numbers of negative imaginary parts.

(c) The $\cJ_0$ term penalizes the jumps of the vector field $\bfu$
and the $\cJ_1$ term penalizes the jumps of the 
tangential component of the vector field $\curl \bfu$.
which, to the best of our knowledge, has not been used before in the 
context of IPDG methods for the Maxwell equations.
They play a vital role for our IPDG methods being 
absolutely stable, see section \ref{sec-4}.

(d) $\epsilon=-1,0,1$ correspond to the nonsymmetric, incomplete, and 
symmetric IPDG methods for the Poisson problem. In the remainder of
this paper, we shall only consider the symmetric case $\epsilon=1$ 
and set $b_h(\cdot,\cdot)=b_h^1(\cdot,\cdot)$ for notation brevity.
\end{remark}

With the help of the sesquilinear form $b_h(\cdot,\cdot)$ we now
introduce the following weak formulation for \eqref{e1.1}--\eqref{e1.2}:
Find $\bfE\in \bV\cap \bH(\curl,\Ome)$ such that
\begin{equation} \label{e3.16}
a_h(\bfE,\bfF) =(f,\bfF)_\Ome +\langle \bfg, \bfF_T\rangle_{\Ga} 
\qquad \forall \bfF\in \bV\cap \bH(\curl,\Ome),
\end{equation}
where
\begin{equation} \label{e3.16a}
a_h(\bfE,\bfF):= b_h(\bfE,\bfF) - k^2(\bfE,\bfF)_\Ome 
-\i \lambda \langle \bfE_T,\bfF_T\rangle_{\Ga}.
\end{equation}
From \eqref{e3.4}, it is clear that, if $\bfE\in \bH^2(\Ome)$ is the 
solution of \eqref{e1.1}--\eqref{e1.2}, then \eqref{e3.16} holds 
for all $\bfF\in \bV$.

For any $K\in \cT_h$,  let $P_r(K)$ denote the set of all complex-valued polynomials
whose degrees in all variables (total degrees) do not exceed $r (\geq 1)$.
 We define our IPDG approximation space $\bV_h$ as
\[
\bV_h:=\prod_{K\in \cT_h} \bP_r(K).
\]
Clearly, $\bV_h\subset \bV\subset \bL^2(\Ome)$. But $\bV_h\not\subset 
\bH(\curl,\Ome)$.

We are now ready to define our IPDG methods based on the weak formulation
\eqref{e3.16}: Find $\bfE_h\in \bV_h$ such that for all $\bfF_h\in \bV_h$
\begin{equation}\label{e3.17}
a_h(\bfE_h,\bfF_h)
=(f,\bfF_h)_\Ome + \bigl\langle \bfg, (\bfF_h)_{T} \bigr\rangle_{\Ga}.
\end{equation}

We note that \eqref{e3.17} defines a family of IPDG methods 
for $r\geq 1$. For the ease of presentation and to better 
present ideas, in the rest of this paper we only consider the
case $r=1$, the linear element case.
In the next two sections, we shall study the stability and
error estimates for the above IPDG method with $r=1$. Especially, 
we are interested in knowing how the stability constants and error 
constants depend on the wave number $k$ (and mesh size $h$, of course) 
and what are the ``optimal" relationship between mesh size $h$ and 
the wave number $k$. We remark that the IPDG method with $r=1$ uses piecewise linear polynomials even for Cartesian meshes. By contrast, for the corresponding linear conforming edge element method on Cartesian meshes, the trial functions have to be chosen as piecewise trilinear polynomials. 
We also note that the linear system resulted from \eqref{e3.17} is 
ill-conditioned and strongly indefinite because the coefficient
matrix has many eigenvalues with very large negative real
parts. Solving such a large linear system is another challenging problem
associated with time harmonic Maxwell problems, which will be addressed
in a future work.

For further analysis we introduce the following semi-norms/norms on  $\bV$:
\begin{align} \label{e3.11}
\|\curl\bfv\|_{\bL^2(\cT_h)}^2
:=&\sum_{K\in\cT_h} \norml{\curl \bfv}{K}^2, \db\\
\norm{\bfv}_{DG}^2 :=&\|\curl\bfv\|_{\bL^2(\cT_h)}^2 +\norml{\bfv}{\Om}^2  
\label{e3.12} \\
&+ 
\sum_{\cF\in\cE_h^I} \Bigl( \frac{\ga_{0,\cF}}{h_\cF} \norml{\jm{\bfv_T}}{\cF}^2 
+\ga_{1,\cF} h_\cF \norml{\jm{\curl\bfv\times \bfnu_\cF}}{\cF}^2 \Bigr)
\nonumber\db\\
=& \|\curl\bfv\|_{\bL^2(\cT_h)}^2 +\norml{\bfv}{\Om}^2+\cJ_0(\bfv,\bfv) + \cJ_1(\bfv,\bfv), \nn  \db\\ 
\norme{\bfv}^2 :=&\norm{\bfv}_{DG}^2 +\sum_{\cF\in\cE_h^I}
\frac{h_\cF}{\ga_{0,\cF}}\norml{\av{\curl \bfv\times \bfnu_\cF}}{\cF}^2.
\label{e3.13}
\end{align}

Clearly, the sesquilinear form $b_h(\cdot,\cdot)$ satisfies: 
For any $\bfv\in \bV$
\begin{align} \label{e3.14}
\re b_h(\bfv,\bfv)&=\|\curl\bfv\|_{\bL^2(\cT_h)}^2
-2\re \sum_{\cF\in\cE_h^I} \bigl\langle \avrg{\curl \bfv\times\bfnu_\cF},
\jump{\bfv_T} \bigr\rangle_\cF, \\
\im b_h(\bfv,\bfv)&=-\cJ_0(\bfv,\bfv)-\cJ_1(\bfv,\bfv). \label{e3.15}
\end{align}

\section{Discrete coercivity and stability estimates}\label{sec-4} 
In this section we shall prove that the discrete sesquilinear form
$a_h(\cdot,\cdot)$ satisfies a discrete coercivity, which 
is slightly stronger than the generalized inf-sup 
condition proved in the previous section for the sesquilinear form
$a(\cdot,\cdot)$. Such a discrete coercivity is possible for 
the linear element because $\curl\curl \bfv_h=0$ (defined element-wise) for all
$\bfv_h\in \bfV_h$.  As an immediate corollary of the discrete 
coercivity, we shall derive a priori estimates for solutions of
\eqref{e3.17} for all $h,k>0$, which then infer 
the well-posedness of \eqref{e3.17}. 

We state the first main theorem of this section which 
establishes a coercivity for the discrete sesquilinear 
form $a_h(\cdot,\cdot)$.

\begin{theorem}\label{discrete_coercivity}
Let $\ga_0=\min_{\cF\in\cE_h^I} \set{\ga_{0,\cF}}$, 
$\ga_1=\min_{\cF\in\cE_h^I} \set{\ga_{1,\cF}}$, and
$h_{\min}= \min_{\cF\in\cE_h} \set{h_{\cF}}$. 
Then there exists a constant $0<C<1$ such that
\end{theorem}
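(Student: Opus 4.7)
The plan is to analyze $a_h(\bfv_h,\bfv_h)$ via its real and imaginary parts separately, which by \eqref{e3.14}--\eqref{e3.15} take the explicit form
\begin{align*}
\re a_h(\bfv_h,\bfv_h) &= \|\curl\bfv_h\|^2_{\bL^2(\cT_h)} - 2\re\sum_{\cF\in\cE_h^I}\bigl\langle\{\curl\bfv_h\times\bfnu_\cF\},[\bfv_{h,T}]\bigr\rangle_\cF - k^2\|\bfv_h\|^2_{\bL^2(\Ome)},\\
-\im a_h(\bfv_h,\bfv_h) &= \cJ_0(\bfv_h,\bfv_h) + \cJ_1(\bfv_h,\bfv_h) + \lambda\|\bfv_{h,T}\|^2_{\bL^2(\Ga)}.
\end{align*}
Because $|a_h(\bfv_h,\bfv_h)|$ majorizes both $|\re a_h(\bfv_h,\bfv_h)|$ and $|\im a_h(\bfv_h,\bfv_h)|$, the strategy is to extract each positive summand appearing in $\|\bfv_h\|^2_{DG}$ by combining the two parts, exploiting in a crucial way the fact that $\curl\curl\bfv_h\equiv 0$ elementwise for piecewise linear fields.

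The first technical step is to absorb the face cross-term in the real part. Applying Cauchy--Schwarz face by face together with the standard discrete trace inequality $\|\curl\bfv_h\|_{\bL^2(\cF)}\ls h_K^{-1/2}\|\curl\bfv_h\|_{\bL^2(K)}$ (valid since $\curl\bfv_h$ is piecewise constant) and Young's inequality, I would bound
$$2\Bigl|\sum_{\cF}\bigl\langle\{\curl\bfv_h\times\bfnu_\cF\},[\bfv_{h,T}]\bigr\rangle_\cF\Bigr| \leq \tfrac12\|\curl\bfv_h\|^2_{\bL^2(\cT_h)} + \tfrac{C}{\ga_0}\cJ_0(\bfv_h,\bfv_h),$$
so that, taking $\ga_0$ beyond an absolute threshold, the residual $\cJ_0$ on the right can be absorbed against $-\im a_h(\bfv_h,\bfv_h)$. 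This alone gives $|a_h(\bfv_h,\bfv_h)|$-control of $\|\curl\bfv_h\|^2_{\bL^2(\cT_h)}$, $\cJ_0$, $\cJ_1$, and $\lambda\|\bfv_{h,T}\|^2_{\bL^2(\Ga)}$, up to the stubborn $-k^2\|\bfv_h\|^2_{\bL^2(\Ome)}$ remaining in the real part.

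The decisive structural ingredient is then the identity $\curl\curl\bfv_h\equiv 0$ on every $K\in\cT_h$, which is precisely why the analysis is confined to $r=1$. Combined with the novel penalty $\cJ_1$ that controls the tangential jumps of $\curl\bfv_h$ across interior faces, this property should yield a broken Poincar\'e-type estimate in which $\|\bfv_h\|^2_{\bL^2(\Ome)}$ is bounded by $\|\curl\bfv_h\|^2_{\bL^2(\cT_h)}$, $\cJ_0$, $\cJ_1$, and $\|\bfv_{h,T}\|^2_{\bL^2(\Ga)}$, with a constant that depends explicitly on $\ga_0,\ga_1, h_{\min}$. The heuristic is that $\cJ_0$ penalizes the tangential discontinuities of $\bfv_h$ while $\cJ_1$ penalizes the tangential discontinuities of $\curl\bfv_h$; together they push $\bfv_h$ close enough to an $\bH(\curl)$-conforming regime that the pointwise identity $\curl\curl\bfv_h=0$ becomes usable. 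Absorbing $k^2\|\bfv_h\|^2_{\bL^2(\Ome)}$ into the positive quantities then forces $\ga_0,\ga_1$ to be taken large enough in terms of $k$ and $h_{\min}$, which is how those parameters enter the theorem statement.

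Finally, combining the real and imaginary estimates through $|a_h(\bfv_h,\bfv_h)| \geq \tfrac{1}{\sqrt 2}\bigl(|\re a_h(\bfv_h,\bfv_h)|+|\im a_h(\bfv_h,\bfv_h)|\bigr)$ delivers the coercivity $|a_h(\bfv_h,\bfv_h)| \geq C\|\bfv_h\|^2_{DG}$ with some $0<C<1$. I expect the broken Poincar\'e-type step to be the main obstacle, because it must genuinely exploit \emph{both} penalty terms $\cJ_0$ and $\cJ_1$ simultaneously, together with the elementwise relation $\curl\curl\bfv_h=0$; this is exactly what justifies the unusual design of penalizing the tangential jumps of both $\bfv_h$ and $\curl\bfv_h$, and what explains why the argument does not extend without modification beyond piecewise linear elements.
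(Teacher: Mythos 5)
Your opening move (bounding the face cross-term in $\re a_h$ by Cauchy--Schwarz, the discrete trace inequality, and Young's inequality) matches the paper's first step, but your ``decisive structural ingredient'' is not the paper's argument, and as stated it is false. You propose a broken Poincar\'e-type estimate of the form
\begin{equation*}
\|\bfv_h\|_{\bL^2(\Ome)}^2 \ls \|\curl\bfv_h\|_{\bL^2(\cT_h)}^2 + \cJ_0(\bfv_h,\bfv_h) + \cJ_1(\bfv_h,\bfv_h) + \|(\bfv_h)_T\|_{\bL^2(\Ga)}^2,
\end{equation*}
to be used to absorb the term $-k^2\|\bfv_h\|_{\bL^2(\Ome)}^2$. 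No such inequality can hold, for any choice of $\ga_0,\ga_1$, because the right-hand side annihilates gradient fields: take $\bfv_h=\nab\vp$ with $\vp$ a continuous piecewise quadratic vanishing on $\Ga$ (e.g., an interior edge bubble), so that $\bfv_h$ is a legitimate member of $\bfV_h$. Then $\curl\bfv_h=0$ on every element, $\jump{(\bfv_h)_T}=0$ on interior faces (tangential derivatives of a continuous function are single-valued), $\jump{\curl\bfv_h\times\bfnu_\cF}=0$, and $(\bfv_h)_T=\nab_T\vp=0$ on $\Ga$, while $\|\bfv_h\|_{\bL^2(\Ome)}>0$. This is exactly the nontrivial-kernel obstruction for the $\curl$ operator that the paper flags in the remarks following Theorem~\ref{stability}, and it cannot be repaired by enlarging the penalty parameters. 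Your plan also ends up requiring $\ga_0,\ga_1$ large in terms of $k$ and $h_{\min}$, which contradicts the statement being proved: the coercivity \eqref{e4.5a} holds \emph{for all} $k,\ga_0,\ga_1>0$ (this is the advertised absolute stability), with all parameter dependence pushed into the constant $\ga_h$ of \eqref{e4.5b}, never into a hypothesis.

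The paper's proof runs the key inequality in the \emph{opposite} direction, and that reversal is the whole point. Integrating by parts elementwise and using $\curl\curl\bfu_h=0$ (so the volume term vanishes identically), the broken curl energy is expressed purely through face terms, which after trace and Young inequalities gives \eqref{e4.7}:
\begin{align*}
2\|\curl\bfu_h\|_{\bL^2(\cT_h)}^2 \le{}& \frac{k^2}{2}\|\bfu_h\|_{\bL^2(\Ome)}^2
+ C\sum_{\cF\in\cE_h^B}h_\cF^{-1}\|(\bfu_h)_T\|_{\bL^2(\cF)}^2 \\
&+ C\sum_{\cF\in\cE_h^I}h_\cF^{-1}\Bigl(\|\jump{(\bfu_h)_T}\|_{\bL^2(\cF)}^2
+ k^{-2}\|\jump{\curl\bfu_h\times\bfnu_\cF}\|_{\bL^2(\cF)}^2\Bigr),
\end{align*}
i.e.\ the curl energy is dominated by a \emph{small} multiple of $k^2\|\bfu_h\|_{\bL^2(\Ome)}^2$ plus jump terms; the gradient fields above are harmless here because for them the left-hand side is zero. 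Adding this to the bound \eqref{e4.6} for $\re a_h$ turns the indefiniteness into an asset: the positive $\frac32\|\curl\bfu_h\|^2_{\bL^2(\cT_h)}$ is swallowed by the negative $-k^2\|\bfu_h\|^2_{\bL^2(\Ome)}$, leaving $\|\curl\bfu_h\|_{\bL^2(\cT_h)}^2+k^2\|\bfu_h\|_{\bL^2(\Ome)}^2 \le -2\re a_h(\bfu_h,\bfu_h) + (\text{jump and boundary terms})$, after which those jump and boundary terms are paid for by $-\im a_h(\bfu_h,\bfu_h) = \cJ_0(\bfu_h,\bfu_h)+\cJ_1(\bfu_h,\bfu_h)+\la\|(\bfu_h)_T\|_{\bL^2(\Ga)}^2$ at the price of the factor $\ga_h$. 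In short: you tried to control $k^2\|\bfu_h\|^2_{\bL^2(\Ome)}$ by the curl energy and the jumps (impossible), whereas the proof controls the curl energy by $k^2\|\bfu_h\|^2_{\bL^2(\Ome)}$ and the jumps, and lets the sign of the $-k^2$ term supply the $\bL^2$ control.
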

\begin{align}\label{e4.5a}
|a_h(\bfu_h,\bfu_h)| &\geq  \frac{C}{\gamma_h} \|\bfu_h\|_{E,h}^2 
\qquad \forall \bfu_h\in \bfV_h
\end{align}
for all $k, \gamma_{0}, \gamma_{1}>0$. Where
\begin{align}\label{e4.5b}
\gamma_h &:= \frac{1}{\lambda h_{\min}} + \frac{1}{\gamma_1 k^2 h_{\min}^2} 
+\frac{1}{\gamma_0}+1, \\
\|\bfu_h\|_{E,h}&:=\Bigl(\|\curl\bfu_h\|_{\bL^2(\cT_h)}^2
+ k^2 \|\bfu_h\|_{\bL^2(\Ome)}^2 \label{e4.5c} \\
&\qquad+\ga_h\big(\cJ_0(\bfu_h,\bfu_h)+\cJ_1(\bfu_h,\bfu_h)+\lambda \|(\bfu_h)_T\|_{\bL^2(\Gamma)}^2\big) \Bigr)^{\frac12}. \nn 
\end{align}

\begin{proof} 
For any $\cF\in \cE_h$, define $\Ome_\cF:=\bigcup\big\{K\in\cT_h; \, \pa K\cap 
\cF\neq \emptyset \big\}$.  By \eqref{e3.16a}, \eqref{e3.14}, and the following trace inequality
\begin{equation}\label{trace_ineq}
\|\{\bfv_h\}\|_{\bL^2(\cF)} \leq C h_\cF^{-\frac12}\|\bfv_h\|_{\bL^2(\Ome_\cF)} 
\qquad\forall \bfv_h\in \bfV_h
\end{equation}
for some $h_\cF$-independent positive constant $C$, we get
\begin{align}\label{e4.6}
&\re a_h(\bfu_h, \bfu_h)
\leq\|\curl\bfu_h\|_{\bL^2(\cT_h)}^2 - k^2\|\bfu_h\|_{\bL^2(\Ome)}^2\\ 
&\hskip 1.2in 
+2\sum_{\cF\in \cE_h^I} 
\|\{\curl\bfu_h\times \bfnu_\cF\}\|_{\bL^2(\cF)} \|[(\bfu_h)_T]\|_{\bL^2(\cF)}\nonumber\\
&\hskip 0.5in
\leq\|\curl\bfu_h\|_{\bL^2(\cT_h)}^2 - k^2\|\bfu_h\|_{\bL^2(\Ome)}^2 \nonumber\\
&\hskip 1.2in 
+C \sum_{\cF\in \cE_h^I} h_\cF^{-\frac12} \|\curl\bfu_h\|_{\bL^2(\Ome_\cF)} 
\|[(\bfu_h)_T]\|_{\bL^2(\cF)}\nonumber\\
&\hskip 0.5in
\leq \frac32 \|\curl\bfu_h\|_{\bL^2(\cT_h)}^2 - k^2\|\bfu_h\|_{\bL^2(\Ome)}^2 
+C \sum_{\cF\in \cE_h^I} h_\cF^{-1} \|[(\bfu_h)_T]\|_{\bL^2(\cF)}^2.
\nonumber 
\end{align}

Since $\bfu_h\in \bfV_h$ is piecewise linear, then $\curl\curl\bfu_h=0$
in each $K\in \cT_h$. By integrating by parts and using the trace inequality
\eqref{trace_ineq} we obtain 
\begin{align*}
&\norml{\curl\bfu_h}{\cT_h}^2
=\sum_{K\in\cT_h} \bigl( \curl\bfu_h, \curl\bfu_h \bigr)_K\\
&\qquad
=\sum_{K\in\cT_h}\Bigl( \bigl(\curl\curl\bfu_h, \bfu_h \bigr)_K 
-\Langle\curl\bfu_h\times\bfnu_K,(\bfu_h)_T\Rangle_{\pa K} \Bigr) \nn  \db\\
&\qquad
=-\sum_{\cF\in\cE_h^B} \Langle\curl\bfu_h\times\bfnu_K,(\bfu_h)_T\Rangle_{\cF} \nn \\
&\qquad\qquad
-\sum_{\cF\in\cE_h^I}\Bigl(\Langle[\curl\bfu_h\times\bfnu_\cF],\{(\bfu_h)_T\}
\Rangle_{\cF}+\Langle \{\curl\bfu_h\times\bfnu_\cF\},[(\bfu_h)_T]\Rangle_{\cF}\Bigr)
\nn  \db\\ 
&\qquad 
\leq C\sum_{\cF\in\cE_h^B} h_\cF^{-\frac12} \|\curl \bfu_h\|_{\bL^2(\Ome_\cF)} 
\|(\bfu_h)_T\|_{\bL^2(\cF)} \nn \\
&\qquad\qquad
+ C\sum_{\cF\in\cE_h^I}  h_\cF^{-\frac12} 
\|[\curl\bfu_h\times\bfnu_\cF]\|_{\bL^2(\cF)} \|\bfu_h\|_{\bL^2(\Ome_\cF)}  \nn \\
&\qquad\qquad
+ C\sum_{\cF\in\cE_h^I}  h_\cF^{-\frac12}
\|\curl\bfu_h\|_{\bL^2(\Ome_\cF)} \|[(\bfu_h)_T]\|_{\bL^2(\cF)} \nn \db\\
&\qquad
\leq \frac13 \|\curl\bfu_h\|_{\bL^2(\cT_h)}^2 + \frac{k^2}{6} \|\bfu_h\|_{\bL^2(\Ome)}^2
+C \sum_{\cF\in\cE_h^B} h_\cF^{-1} \|(\bfu_h)_T\|_{\bL^2(\cF)}^2 \nn\\
&\qquad\qquad
+C\sum_{\cF\in\cE_h^I} h_\cF^{-1}\Bigl( \|[(\bfu_h)_T]\|_{\bL^2(\cF)}^2
+k^{-2} \|[\curl\bfu_h\times\bfnu_\cF]\|_{\bL^2(\cF)}^2  \Bigr). \nn
\end{align*}
Hence,
\begin{align}\label{e4.7}
&2\norml{\curl\bfu_h}{\cT_h}^2 \leq \frac{k^2}{2} \|\bfu_h\|_{\bL^2(\Ome)}^2
+C \sum_{\cF\in\cE_h^B} h_\cF^{-1} \|(\bfu_h)_T\|_{\bL^2(\cF)}^2 \\
&\hskip 1.0in
+C\sum_{\cF\in\cE_h^I} h_\cF^{-1}\Bigl( \|[(\bfu_h)_T]\|_{\bL^2(\cF)}^2
+ k^{-2} \|[\curl\bfu_h\times\bfnu_\cF]\|_{\bL^2(\cF)}^2 \Bigr). \nn
\end{align}
Adding \eqref{e4.6} and \eqref{e4.7} and rearranging the terms yield
\begin{align}\label{e4.8}
&\|\curl\bfu_h\|_{\bL^2(\cT_h)}^2 + k^2\|\bfu_h\|_{\bL^2(\Ome)}^2 \\
&
\leq -2\re a_h(\bfu_h, \bfu_h)
+\frac{C}{\lambda h_{\min}} \lambda \|(\bfu_h)_T\|_{\bL^2(\Gamma)}^2+ 
\frac{C}{\gamma_0}\sum_{\cF\in\cE_h^I}  \frac{\ga_{0,\cF}}{h_\cF} \norml{\jm{(\bfu_h)_T}}{\cF}^2 \nn \\
&\quad+ \frac{C}{\gamma_1 k^2 h_{\min}^2} \sum_{\cF\in\cE_h^I}\ga_{1,\cF} h_\cF \norml{\jm{\curl\bfu_h\times\bfnu_\cF}}{\cF}^2 .\nn 
\end{align}
Therefore,  by the definitions of $\cJ_0(\cdot,\cdot)$ and $\cJ_1(\cdot,\cdot)$ and 
the identity \eqref{e3.15} we get
\begin{align*}
&\|\curl\bfu_h\|_{\bL^2(\cT_h)}^2 + k^2\|\bfu_h\|_{\bL^2(\Ome)}^2 
+\ga_h\big(\cJ_0(\bfu_h,\bfu_h)+\cJ_1(\bfu_h,\bfu_h)+\lambda \|(\bfu_h)_T\|_{\bL^2(\Gamma)}^2\big)\db\\
&\qquad
\leq -2\re a_h(\bfu_h, \bfu_h)
+ C\ga_h\big(\cJ_0(\bfu_h,\bfu_h)+\cJ_1(\bfu_h,\bfu_h)+\lambda \|(\bfu_h)_T\|_{\bL^2(\Gamma)}^2\big) \nn \db\\
&\qquad
= -2\re a_h(\bfu_h, \bfu_h) - C \gamma_h \im a_h(\bfu_h, \bfu_h) \nn  \db\\
&\qquad
\leq C \gamma_h \bigl(|\re a_h(\bfu_h,\bfu_h)|+|\im a_h(\bfu_h, \bfu_h)|\bigr) \nn\db 
\leq  C \gamma_h |a_h(\bfu_h, \bfu_h)|, \nn 
\end{align*}
where $\gamma_h$ is defined by \eqref{e4.5b}. Hence, \eqref{e4.5a} holds.
The proof is completed.
\end{proof}

\begin{remark}
(a) The discrete sesquilinear form $a_h(\cdot,\cdot)$ satisfies a stronger 
coercivity than its continuous counterpart $a(\cdot,\cdot)$ does, 
see  Theorem \ref{inf-sup}. Moreover, the proof of Theorem 
\ref{discrete_coercivity} is simpler than that of Theorem \ref{inf-sup},
all these are possible because of the special form of $a_h(\cdot,\cdot)$ 
and the fact that $\curl\curl \bfv_h=0$ in $K\in \cT_h$ for all piecewise
linear functions $\bfv_h\in \bfV_h$. However, a weak coercivity is only
expected to hold in the case of high order elements.

(b) It is also important to point out that Theorem \ref{discrete_coercivity}
holds without assuming that $\Ome$ is a star-shaped domain.
\end{remark}

An immediate consequence of the above discrete coercivity are the 
following a priori estimates for solutions to the IPDG method \eqref{e3.17}.

\begin{theorem}\label{discrete_stability}
Every solution $\bfE_h$ of the IPDG method \eqref{e3.17} satisfies the 
following stability estimates.
\begin{align*}
&\norml{\curl\bfE_h}{\cT_h}+ k\norml{\bfE_h}{\Om} 
\ls k^{-1}\ga_h  \|\bff\|_{\bL^2(\Ome)}+(\la^{-1}\ga_h)^{\frac12}\|\bfg\|_{\bL^2(\Gamma)}, \\
&\bigl( \cJ_0(\bfE_h,\bfE_h) + \cJ_1(\bfE_h,\bfE_h)+\lambda \|(\bfE_h)_T\|_{\bL^2(\Gamma)}^2  \bigr)^{\frac12}
\ls k^{-1}\ga_h^\frac12  \|\bff\|_{\bL^2(\Ome)}+\la^{-\frac12}\|\bfg\|_{\bL^2(\Gamma)}.
\end{align*}
\end{theorem}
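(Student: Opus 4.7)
The plan is to derive both stability estimates from the discrete coercivity already established in Theorem \ref{discrete_coercivity} by testing the IPDG equation against the solution itself. Since the energy norm $\|\cdot\|_{E,h}$ contains every quantity that appears in the two stability estimates, the whole argument reduces to a careful weighted Cauchy--Schwarz on the data side.

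First, I would set $\bfF_h=\bfE_h$ in \eqref{e3.17} to obtain
\[
a_h(\bfE_h,\bfE_h)=(\bff,\bfE_h)_\Ome+\langle \bfg,(\bfE_h)_T\rangle_\Gamma.
\]
By Theorem \ref{discrete_coercivity}, the left-hand side is bounded below in modulus by $(C/\gamma_h)\|\bfE_h\|_{E,h}^2$. For the right-hand side I would split each term so that the $\bfE_h$-factor fits inside $\|\bfE_h\|_{E,h}$: write $(\bff,\bfE_h)_\Ome=(k^{-1}\bff,k\bfE_h)_\Ome$ and $\langle\bfg,(\bfE_h)_T\rangle_\Gamma=\langle(\gamma_h\lambda)^{-1/2}\bfg,(\gamma_h\lambda)^{1/2}(\bfE_h)_T\rangle_\Gamma$. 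A single Cauchy--Schwarz then yields
\[
|a_h(\bfE_h,\bfE_h)|\le\Bigl(k^{-2}\|\bff\|_{\bL^2(\Ome)}^2+(\gamma_h\lambda)^{-1}\|\bfg\|_{\bL^2(\Gamma)}^2\Bigr)^{\!\frac12}\|\bfE_h\|_{E,h}
\]
because, by definition \eqref{e4.5c}, $k^2\|\bfE_h\|_{\bL^2(\Ome)}^2+\gamma_h\lambda\|(\bfE_h)_T\|_{\bL^2(\Gamma)}^2\le\|\bfE_h\|_{E,h}^2$.

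Combining the two estimates gives $\|\bfE_h\|_{E,h}\lesssim \gamma_h\bigl(k^{-2}\|\bff\|_{\bL^2(\Ome)}^2+(\gamma_h\lambda)^{-1}\|\bfg\|_{\bL^2(\Gamma)}^2\bigr)^{1/2}$, which after distributing the $\gamma_h$ reads
\[
\|\bfE_h\|_{E,h}\lesssim k^{-1}\gamma_h\|\bff\|_{\bL^2(\Ome)}+(\lambda^{-1}\gamma_h)^{1/2}\|\bfg\|_{\bL^2(\Gamma)}.
\]
Now I would simply read off the two advertised inequalities. The first one follows from $\|\curl\bfE_h\|_{\bL^2(\cT_h)}+k\|\bfE_h\|_{\bL^2(\Ome)}\le\sqrt2\,\|\bfE_h\|_{E,h}$. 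For the second, the definition of $\|\cdot\|_{E,h}$ gives $\gamma_h\bigl(\cJ_0(\bfE_h,\bfE_h)+\cJ_1(\bfE_h,\bfE_h)+\lambda\|(\bfE_h)_T\|_{\bL^2(\Gamma)}^2\bigr)\le\|\bfE_h\|_{E,h}^2$, so dividing by $\gamma_h^{1/2}$ and inserting the bound on $\|\bfE_h\|_{E,h}$ yields exactly $k^{-1}\gamma_h^{1/2}\|\bff\|_{\bL^2(\Ome)}+\lambda^{-1/2}\|\bfg\|_{\bL^2(\Gamma)}$.

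There is essentially no obstacle here; the only thing to watch is the bookkeeping of the weights, i.e.\ making sure the $\gamma_h$ that comes from coercivity and the $\gamma_h$ inside the definition of $\|\cdot\|_{E,h}$ combine to produce the asymmetric powers $\gamma_h$ and $\gamma_h^{1/2}$ seen in the two estimates. All heavy lifting has already been done in Theorem \ref{discrete_coercivity}; this result is strictly a corollary.
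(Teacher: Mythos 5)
Your proposal is correct and follows exactly the paper's own argument: test \eqref{e3.17} with $\bfF_h=\bfE_h$, bound the data terms by a weighted Cauchy--Schwarz so that the factor $\bigl(k^2\|\bfE_h\|_{\bL^2(\Ome)}^2+\gamma_h\lambda\|(\bfE_h)_T\|_{\bL^2(\Gamma)}^2\bigr)^{1/2}$ is absorbed into $\|\bfE_h\|_{E,h}$, and then invoke the coercivity \eqref{e4.5a}. The paper states the final read-off more tersely ("the desired estimates follow from combining\dots"), but your bookkeeping of the $\gamma_h$ versus $\gamma_h^{1/2}$ weights is precisely what that sentence hides, so there is nothing to add.
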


\begin{proof}
By \eqref{e3.17} and Schwarz inequality we get 
\begin{align*}
\abs{a_h(\bfE_h,\bfE_h)}&=\bigl|(\bff,\bfE_h)_\Ome + \langle \bfg,(\bfE_h)_T \rangle_\Gamma \bigr| 
\leq \|\bff\|_{\bL^2(\Ome)} \|\bfE_h\|_{\bL^2(\Ome)} 
  + \|\bfg\|_{\bL^2(\Gamma)} \|(\bfE_h)_T\|_{\bL^2(\Gamma)} \\
& 
\leq \bigl( k^2 \|\bfE_h\|_{\bL^2(\Ome)}^2 
+ \lambda\ga_h \|(\bfE_h)_T^2\|_{\bL^2(\Gamma)}\bigr)^{\frac12}
\bigl( k^{-2} \|\bff\|_{\bL^2(\Ome)}^2 
+ (\lambda\ga_h)^{-1} \|\bfg\|_{\bL^2(\Gamma)}^2 \bigr)^{\frac12} \\
&
\leq \|\bfE_h\|_{E,h}\,  \bigl( k^{-1} \|\bff\|_{\bL^2(\Ome)} 
+ (\lambda\ga_h)^{-\frac12} \|\bfg\|_{\bL^2(\Gamma)} \bigr).
\end{align*}
The desired estimates follow from combining the above inequality
with \eqref{e4.5a}.  The proof is completed.
\end{proof}

The above discrete stability estimates in turn immediately imply the 
well-posedness of the IPDG method \eqref{e3.17}. 

\begin{corollary}\label{existence}
There exists a unique solution to \eqref{e3.17} for any fixed set of 
parameters $k, h_\cF, \ga_{0,\cF}, \ga_{1,\cF}>0$.
\end{corollary}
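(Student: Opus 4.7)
The plan is to exploit the fact that the IPDG problem \eqref{e3.17} is a square finite-dimensional linear system: the trial and test spaces are both $\bfV_h=\prod_{K\in\cT_h}\bfP_1(K)$, which has finite (and equal) dimension. Consequently, by the rank-nullity theorem from linear algebra, existence of a solution for every right-hand side is equivalent to uniqueness of the solution. Thus it suffices to establish that the only $\bfE_h\in \bfV_h$ satisfying the homogeneous problem $a_h(\bfE_h,\bfF_h)=0$ for all $\bfF_h\in\bfV_h$ is $\bfE_h=\mathbf{0}$.

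To verify uniqueness, I would invoke Theorem \ref{discrete_stability} (or directly the discrete coercivity in Theorem \ref{discrete_coercivity}) with $\bff=\mathbf{0}$ and $\bfg=\mathbf{0}$. The stability bound then gives
\begin{equation*}
\norml{\curl\bfE_h}{\cT_h}+k\norml{\bfE_h}{\Om}\;\lesssim\;0,
\end{equation*}
so $\bfE_h$ vanishes identically in $\bL^2(\Ome)$, and hence as an element of the finite-dimensional space $\bfV_h$. Equivalently, setting $\bfu_h=\bfE_h$ in \eqref{e4.5a} yields $\|\bfE_h\|_{E,h}^2\le C\gamma_h|a_h(\bfE_h,\bfE_h)|=0$, which forces $\bfE_h=\mathbf{0}$ since the $\|\cdot\|_{E,h}$ functional is a norm on $\bfV_h$ for any fixed $k,\ga_{0,\cF},\ga_{1,\cF}>0$ (the $L^2(\Ome)$ piece alone already controls the finite-dimensional vector).

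There is essentially no obstacle to overcome here: the work was done in Theorem \ref{discrete_coercivity}, and the corollary is a standard three-line consequence. The only very minor point to be careful about is checking that $\|\cdot\|_{E,h}$ is indeed a (semi)norm that separates points on $\bfV_h$, which is immediate from the presence of the $k^2\|\bfu_h\|_{\bL^2(\Ome)}^2$ term together with $k>0$. Existence for arbitrary data $\bff,\bfg$ then follows automatically from the square-system argument.
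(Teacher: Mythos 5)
Your proposal is correct and is exactly the argument the paper intends: the corollary follows "immediately" from Theorem \ref{discrete_stability} (equivalently, the coercivity \eqref{e4.5a}) by taking $\bff=\mathbf{0}$, $\bfg=\mathbf{0}$ to get uniqueness, and then invoking the square finite-dimensional linear-system (rank-nullity) argument for existence. Your write-up just makes explicit the steps the paper leaves unstated.
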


\section{Error estimates}\label{sec-5}
In what follows, we suppose $\ga_{0,\cF}\simeq \ga_0$ and 
$\ga_{1,\cF}\simeq \ga_1$ for brevity. For simplicity, we assume
that $\Div\bff=0$ and that $\cT_h$ is a quasi-uniform 
partition of $\Ome$ consisting of tetrahedrons. Let $h:=\max\{h_K;\, K\in \cT_h\}$.

\subsection{$\bH(\curl,\Ome)$-elliptic projection and its error estimates}
\label{sec-5.1}
Let $\bfE$ be the solution to problem \eqref{e1.1}--\eqref{e1.2} 
and $\widetilde\bfE_h\in\bV_h$ be its IPDG $\bH(\curl,\Ome)$-elliptic 
projection defined as follows.
\begin{equation}\label{e5.1}
b_h(\bfE-\widetilde\bfE_h,\bfv_h) +(\bfE-\widetilde\bfE_h,\bfv_h)_\Ome=0 
\quad\forall \bfv_h\in \bfV_h.
\end{equation} 

The following lemma establishes the continuity and coercivity 
for the discrete sesquilinear form $b_h(\cdot,\cdot)$.

\begin{lemma}\label{lem5.1} For any $\bfv, \bfw\in\bV$, the sesquilinear form 
$b_h(\cdot,\cdot)$ satisfies
\begin{equation}\label{e5.2}
\abs{b_h(\bfv,\bfw)+(\bfv,\bfw)_\Om},\quad \abs{b_h(\bfw,\bfv)+(\bfw,\bfv)_\Om}\ls\norme{\bfv}\norme{\bfw}.
\end{equation}
In addition, there exists a positive constant $\uga$ such that, for $\ga_0\ge\uga$,
\begin{equation}\label{e5.3}
\re b_h(\bfv_h,\bfv_h)-\im b_h(\bfv_h,\bfv_h)+(\bfv_h,\bfv_h)_\Om
\ge\frac12\norme{\bfv_h}^2 \qquad\forall \bfv_h\in\bV_h.
\end{equation}
\end{lemma}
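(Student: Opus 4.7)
The plan is to handle both bounds by expanding $b_h(\cdot,\cdot)$ according to definition \eqref{eah} and applying weighted Cauchy--Schwarz inequalities matched to the structure of $\norme{\cdot}$, the extra novelty compared to the standard $\norm{\cdot}_{DG}$-norm being the face-average term $\sum_\cF (h_\cF/\gamma_{0,\cF})\,\norml{\av{\curl\bfv\times\bfnu_\cF}}{\cF}^2$.

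For the continuity bound \eqref{e5.2}, I would split $b_h(\bfv,\bfw)+(\bfv,\bfw)_\Ome$ into the broken $\curl$--$\curl$ term, the two symmetrization sums on $\cE_h^I$, the imaginary pieces $\cJ_0(\bfv,\bfw)$ and $\cJ_1(\bfv,\bfw)$, and the $\bL^2$ inner product, and estimate each by ordinary Cauchy--Schwarz. The only non-trivial ingredient is the symmetrization sum: for a generic term $\langle \av{\curl\bfv\times\bfnu_\cF},[\bfw_T]\rangle_\cF$, I would insert the weights $(h_\cF/\ga_{0,\cF})^{1/2}$ on the average factor and $(\ga_{0,\cF}/h_\cF)^{1/2}$ on the jump factor, then sum and apply Cauchy--Schwarz over the face set. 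The average factor is then controlled by the extra term in $\norme{\bfv}^2$ and the jump factor by $\cJ_0(\bfw,\bfw)^{1/2}\le\norme{\bfw}$; the second symmetrization sum is estimated with the roles of $\bfv$ and $\bfw$ swapped. Collecting all pieces gives $\norme{\bfv}\norme{\bfw}$, and the bound for $b_h(\bfw,\bfv)+(\bfw,\bfv)_\Om$ is identical by the same argument.

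For the coercivity \eqref{e5.3}, I would start from the real/imaginary decomposition \eqref{e3.14}--\eqref{e3.15}, so that
\begin{align*}
\re b_h(\bfv_h,\bfv_h) - \im b_h(\bfv_h,\bfv_h) + (\bfv_h,\bfv_h)_\Om
&= \norml{\curl\bfv_h}{\cT_h}^2 + \norml{\bfv_h}{\Om}^2 + \cJ_0(\bfv_h,\bfv_h) + \cJ_1(\bfv_h,\bfv_h) \\
&\quad- 2\re\sum_{\cF\in\cE_h^I} \langle \av{\curl\bfv_h\times\bfnu_\cF},[(\bfv_h)_T]\rangle_\cF.
\end{align*}
Using the same weighted Cauchy--Schwarz splitting followed by Young's inequality with a free parameter $\epsilon>0$, the cross sum is bounded by $\frac{\epsilon}{2}\sum_\cF \frac{h_\cF}{\ga_{0,\cF}}\norml{\av{\curl\bfv_h\times\bfnu_\cF}}{\cF}^2 + \frac{1}{2\epsilon}\cJ_0(\bfv_h,\bfv_h)$. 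For $\bfv_h\in\bfV_h$ the discrete trace inequality \eqref{trace_ineq} yields $\sum_\cF \frac{h_\cF}{\ga_{0,\cF}}\norml{\av{\curl\bfv_h\times\bfnu_\cF}}{\cF}^2 \le \frac{C}{\ga_0}\norml{\curl\bfv_h}{\cT_h}^2$. Combining, I get
$$\re b_h(\bfv_h,\bfv_h)-\im b_h(\bfv_h,\bfv_h)+\norml{\bfv_h}{\Om}^2 \ge \Bigl(1-\tfrac{\epsilon C}{\ga_0}\Bigr)\norml{\curl\bfv_h}{\cT_h}^2 + \Bigl(1-\tfrac1{\epsilon}\Bigr)\cJ_0(\bfv_h,\bfv_h) + \cJ_1(\bfv_h,\bfv_h) + \norml{\bfv_h}{\Om}^2.$$
I would then choose $\epsilon$ (e.g.\ $\epsilon=4$) and apply the trace inequality once more on the right-hand side to convert $\tfrac12 \norml{\curl\bfv_h}{\cT_h}^2$ into $\tfrac12 \sum_\cF(h_\cF/\ga_{0,\cF})\|\av{\curl\bfv_h\times\bfnu_\cF}\|^2$; selecting $\uga$ so that $\ga_0\ge\uga$ absorbs all the prefactors below $\tfrac12$ delivers $\tfrac12 \norme{\bfv_h}^2$.

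The main obstacle is a bookkeeping one: the $\norme{\cdot}$-norm explicitly contains the face-average of $\curl\bfv_h$, which does not appear naturally in $\re b_h$ or in the penalty. It is therefore essential to invest one application of the discrete trace inequality \eqref{trace_ineq} to turn $\norml{\curl\bfv_h}{\cT_h}^2$ into a bound for this extra term, and this is precisely what forces the smallness requirement $\ga_0\ge\uga$. Because $\cJ_1$ and the $\bL^2$ term do not need to be sacrificed, the argument is tight enough that only $\ga_0$, and not $\ga_1$, enters the threshold $\uga$.
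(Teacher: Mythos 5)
Your proposal is correct and follows essentially the same route as the paper's proof: both rest on the decomposition \eqref{e3.14}--\eqref{e3.15}, the discrete trace inequality \eqref{trace_ineq}, and weighted Cauchy--Schwarz/Young with a free parameter, with the threshold $\uga$ involving only $\ga_0$ and not $\ga_1$. The only difference is bookkeeping: the paper writes the left-hand side of \eqref{e5.3} as $\norme{\bfv_h}^2$ minus the cross term minus the face-average term and bounds both subtracted pieces by $\bigl(\tfrac14+\tfrac{c_0+c_1}{\ga_0}\bigr)\norme{\bfv_h}^2$, whereas you keep $\norm{\bfv_h}_{DG}^2$ and regenerate the face-average contribution from a portion of $\norml{\curl\bfv_h}{\cT_h}^2$ at the end --- the same trace-inequality estimate, so your splitting constants need only the routine adjustment that the free parameter and a large enough $\uga$ already provide.
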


\begin{proof}
Clearly, \eqref{e5.2} follows from the definitions  \eqref{eah}--\eqref{cJ1},  \eqref{e3.11}--\eqref{e3.13}, and Schwarz inequality. It remains to prove \eqref{e5.3}.

From \eqref{e3.11}--\eqref{e3.15} we have
\begin{align*}
\re b_h(\bfv_h,\bfv_h)&-\im b_h(\bfv_h,\bfv_h)+(\bfv_h,\bfv_h)_\Om\\
=& \norme{\bfv_h}^2
-2\re \sum_{\cF\in\cE_h^I} \pdaj{\curl \bfv_h\times\bfnu_\cF}{(\bfv_h)_T}\\
&-\sum_{\cF\in\cE_h^I}
\frac{h_\cF}{\ga_{0,e}}\norml{\av{\curl \bfv_h\times \bfnu_\cF}}{\cF}^2.
\end{align*}
 It follows from the derivation of \eqref{e4.6} that
there exists a constant $c_0>0$ such that
\begin{align*}
2\re \sum_{\cF\in\cE_h^I}& \pdaj{\curl \bfv_h\times\bfnu_\cF}{(\bfv_h)_T}\\
&\leq \frac{1}{4}\norml{\curl \bfv_h}{\cT_h}^2 +\frac{c_0}{\ga_0}\sum_{\cF\in\cE_h^I}\frac{\ga_{0,\cF}}{h_\cF}
\norml{\jm{(\bfv_h)_T}}{\cF}^2.
\end{align*}
On the other hand, from  \eqref{trace_ineq}, there exists a  constant $c_1>0$ such that, 
\begin{align*}
\sum_{\cF\in\cE_h^I}
\frac{h_\cF}{\ga_{0,e}}\norml{\av{\curl \bfv_h\times \bfnu_\cF}}{\cF}^2\le \frac{c_1}{\ga_0}\norml{\curl \bfv_h}{\cT_h}^2
\end{align*}
Therefore, 
\begin{align*}
\re b_h(\bfv_h,\bfv_h)-\im b_h(\bfv_h,\bfv_h)+(\bfv_h,\bfv_h)_\Om
\ge\Big(1-\frac14-\frac{c_0+c_1}{\ga_0}\Big)\norme{\bfv_h}^2
\end{align*}
which gives \eqref{e5.3} if $\ga_0$ large enough. The proof is completed.
\end{proof}

\begin{remark}\label{rem5.1}
The coercivity and continuity of $b_h(\cdot,\cdot)$ ensure that
the above $\bfH(\curl,\Ome)$-elliptic projection is well defined.
\end{remark}

The following lemma establishes error estimates for $\bfE-\widetilde\bfE_h$.

\begin{lemma}\label{lem5.3}
Suppose problem \eqref{e1.1}--\eqref{e1.2} is $H^2$-regular, then, under the conditions of Lemma~\ref{lem5.1}, there hold
the following estimates:
\begin{align}\label{e5.6}
&\norme{\bfE-\widetilde\bfE_h}
\ls h\,(1+\ga_1)^\frac12 \|\bfE\|_{\bH^1(\curl,\Ome)},\\
&\|\bfE-\widetilde\bfE_h\|_{\bL^2(\Om)} \ls h^2 (1+\ga_1) \cR(\bfE), \label{e5.7}\\
& \|\bfE-\widetilde\bfE_h\|_{\bL^2(\Ga)}\ls h^\frac32 (1+\ga_1) \cR(\bfE), \label{e5.8}
\end{align}
where
\begin{align}\label{e5.7a}
\cR(\bfE) &:= (1+\ga_1)^{\frac12} \|\bfE\|_{\bH^1(\curl,\Ome)}
+ \|\bfE\|_{H^2(\Ome)}.
\end{align}
\end{lemma}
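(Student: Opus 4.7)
The plan is to establish the three bounds in sequence: a C\'ea-type argument gives the energy estimate \eqref{e5.6}, an Aubin--Nitsche duality gives the $L^2$ estimate \eqref{e5.7}, and a split-and-trace argument gives the boundary estimate \eqref{e5.8}.

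For \eqref{e5.6}, let $\bfE_I\in\bfV_h$ be a suitable piecewise linear interpolant of $\bfE$, and set $\eta:=\bfE-\bfE_I$, $\xi_h:=\widetilde\bfE_h-\bfE_I$. The Galerkin orthogonality \eqref{e5.1} gives
\begin{equation*}
b_h(\xi_h,\bfv_h)+(\xi_h,\bfv_h)_\Ome=b_h(\eta,\bfv_h)+(\eta,\bfv_h)_\Ome\qquad\forall\,\bfv_h\in\bfV_h.
\end{equation*}
Setting $\bfv_h=\xi_h$ and taking the combination $\re(\cdot)-\im(\cdot)$, the coercivity \eqref{e5.3} bounds the left side from below by $\tfrac12\norme{\xi_h}^2$, while the continuity \eqref{e5.2} bounds the right side by $C\norme{\eta}\norme{\xi_h}$. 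Hence $\norme{\xi_h}\ls\norme{\eta}$, and the triangle inequality reduces \eqref{e5.6} to bounding $\norme{\bfE-\bfE_I}$ term by term using standard polynomial approximation and the trace inequality $\|\bfv\|_{\bL^2(\cF)}^2\ls h_\cF^{-1}\|\bfv\|_{\bL^2(K)}^2+h_\cF\|\nabla\bfv\|_{\bL^2(K)}^2$; the factor $(1+\ga_1)^{1/2}$ enters through the $\cJ_1$ penalty and relies on $\curl\bfE\in\bfH^1(\Ome)$.

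For \eqref{e5.7}, let $\bfz$ solve the auxiliary problem
\begin{equation*}
\curl\curl\bfz+\bfz=\bfE-\widetilde\bfE_h\ \text{ in }\Ome,\qquad \curl\bfz\times\bfnu=0\ \text{ on }\Gamma,
\end{equation*}
so $H^2$-regularity yields $\|\bfz\|_{\bfH^2(\Ome)}\ls\|\bfE-\widetilde\bfE_h\|_{\bL^2(\Ome)}$. Element-wise integration by parts, using $\bfz\in\bfH^2$ (which kills $[\bfz_T]$ and $[\curl\bfz\times\bfnu]$ across interior faces, and hence both penalty pairings $\cJ_0(\cdot,\bfz)$ and $\cJ_1(\cdot,\bfz)$) together with the natural boundary condition on $\bfz$ (which absorbs the $\Gamma$ contribution) gives the identity $(\bfE-\widetilde\bfE_h,\curl\curl\bfz)_\Ome=b_h(\bfE-\widetilde\bfE_h,\bfz)$. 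Therefore
\begin{equation*}
\|\bfE-\widetilde\bfE_h\|_{\bL^2(\Ome)}^2=b_h(\bfE-\widetilde\bfE_h,\bfz-\bfz_I)+(\bfE-\widetilde\bfE_h,\bfz-\bfz_I)_\Ome
\end{equation*}
after subtracting any $\bfz_I\in\bfV_h$ via \eqref{e5.1}. Applying \eqref{e5.2}, the already-proved \eqref{e5.6}, and the analogous interpolation estimate $\norme{\bfz-\bfz_I}\ls h(1+\ga_1)^{1/2}\|\bfz\|_{\bfH^2(\Ome)}$ (which needs $\bfz\in\bfH^2$ to control the $\cJ_1$-part with the full rate $h$) yields \eqref{e5.7}; the $\|\bfE\|_{\bfH^2(\Ome)}$ summand in $\cR(\bfE)$ surfaces when the $H^2$-refinement of \eqref{e5.6} is tracked carefully through this chain of inequalities.

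For \eqref{e5.8} I would split $\bfE-\widetilde\bfE_h=(\bfE-\bfE_I)+(\bfE_I-\widetilde\bfE_h)$. The continuous piece is handled by the multiplicative trace inequality $\|\bfv\|_{\bL^2(\Gamma)}\ls\|\bfv\|_{\bL^2(\Ome)}^{1/2}\|\bfv\|_{\bfH^1(\Ome)}^{1/2}$ combined with the $L^2$- and $H^1$-interpolation estimates used above, yielding $O(h^{3/2})\|\bfE\|_{\bfH^2(\Ome)}$. The discrete remainder $\bfE_I-\widetilde\bfE_h\in\bfV_h$ is controlled by the inverse trace $\|\bfv_h\|_{\bL^2(\Gamma)}\ls h^{-1/2}\|\bfv_h\|_{\bL^2(\Ome)}$ together with $\|\bfE_I-\widetilde\bfE_h\|_{\bL^2}\leq\|\bfE-\bfE_I\|_{\bL^2}+\|\bfE-\widetilde\bfE_h\|_{\bL^2}$, now bounded via \eqref{e5.7}, which produces the $h^{-1/2}\cdot h^2=h^{3/2}$ rate. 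The main technical obstacle I anticipate is the duality identity $(\bfE-\widetilde\bfE_h,\curl\curl\bfz)_\Ome=b_h(\bfE-\widetilde\bfE_h,\bfz)$: it hinges on the $\bfH^2$-regularity of $\bfz$ killing the appropriate jump and penalty terms, and on the correct natural dual boundary condition absorbing the $\Gamma$-integral produced by element-wise integration by parts. Careful bookkeeping of the $(1+\ga_1)$-dependence and of the split between $\|\bfE\|_{\bfH^1(\curl,\Ome)}$ and $\|\bfE\|_{\bfH^2(\Ome)}$ throughout $\cR(\bfE)$ also requires care.
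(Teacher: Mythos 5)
Your treatment of \eqref{e5.6} (C\'ea-type argument via Lemma~\ref{lem5.1}) and of \eqref{e5.8} (splitting plus trace/inverse-trace inequalities, reusing \eqref{e5.7}) matches the paper's proof. The genuine gap is in your duality argument for \eqref{e5.7}. You pose the dual problem $\curl\curl\bfz+\bfz=\bfE-\widetilde\bfE_h$ and claim $\|\bfz\|_{\bfH^2(\Ome)}\ls\|\bfE-\widetilde\bfE_h\|_{\bL^2(\Ome)}$. This regularity is false, and not for a technical reason: the operator $\curl\curl+I$ provides \emph{no} smoothing of the gradient (divergence) component of the data. Taking the divergence of your dual equation gives $\ddiv\bfz=\ddiv(\bfE-\widetilde\bfE_h)$ distributionally; since $\widetilde\bfE_h$ is a discontinuous piecewise polynomial, $\ddiv(\bfE-\widetilde\bfE_h)$ contains surface distributions supported on the interelement faces, so $\bfz\notin\bfH^1(\Ome)$, let alone $\bfH^2(\Ome)$. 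Equivalently, in the Helmholtz decomposition of the data the gradient part is reproduced exactly in $\bfz$, with zero gain of regularity. Consequently your interpolation estimate $\norme{\bfz-\bfz_I}\ls h(1+\ga_1)^{1/2}\|\bfz\|_{\bfH^2(\Ome)}$ has no valid right-hand side, and the duality chain produces no rate at all. (Your consistency identity $(\bfE-\widetilde\bfE_h,\curl\curl\bfz)_\Ome=b_h(\bfE-\widetilde\bfE_h,\bfz)$, which you flagged as the main risk, is actually the harmless part.)

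This is precisely the Maxwell-specific obstruction that forces the paper's more elaborate route (Steps 3--4 and the Appendix): one must eliminate the gradient component \emph{before} invoking duality. The paper uses that $\bfE-\widetilde\bfE_h$ is discrete divergence-free (test \eqref{e5.1} with $\nabla\vp_h$), replaces the discrete error component $\bfPhi_h$ by a conforming approximation $\bfPhi_h^c\in\bfV_h\cap\bfH(\curl,\Ome)$ whose distance to $\bfPhi_h$ is controlled by the tangential jumps (the estimates \eqref{e5.11c}--\eqref{e5.11d} from \cite{HPSS05}), performs a discrete Helmholtz decomposition $\bfPhi_h^c=\bfw_h+\nabla r_h$ so that the pairing with $\nabla r_h$ vanishes by the discrete divergence-freeness, and then approximates $\bfw_h$ by an exactly divergence-free $\bfw\in\bfH^1(\Ome)$ with $\curl\bfw=\curl\bfw_h$ and $\|\bfw_h-\bfw\|_{\bL^2(\Ome)}\ls h\|\curl\bfPhi_h^c\|_{\bL^2(\Ome)}$. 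Only then is a dual problem solved, with the divergence-free right-hand side $\bfw$, for which (on the convex domain) the regularity $\|\bfz\|_{\bH^1(\curl,\Ome)}\ls\|\bfw\|_{\bL^2(\Ome)}$ does hold --- note that even this is $\bfH^1(\curl)$-regularity, not $\bfH^2$. Without this decomposition machinery, an Aubin--Nitsche argument for the Maxwell elliptic projection cannot be made to work, so your proof of \eqref{e5.7} (and hence of \eqref{e5.8}, which depends on it) is incomplete.
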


\begin{proof}
{\em Step 1:} It follows from \cite{Monk03, nedelec1986, gm11} that there exists 
$\widehat\bfE_h\in \bV_h\cap \bH(\curl,\Ome)$ (i.e., the conforming N\'ed\'elec interpolation 
of $\bfE$) such that the following estimates hold: 
\begin{align}\label{e5.8a}
\|\bfE-\widehat\bfE_h\|_{\bL^2(\Om)}
&\ls h^2\|\bfE\|_{H^2(\Om)},\\
\|\bfE-\widehat\bfE_h\|_{\bL^2(\Gamma)} &\ls h^{\frac32} \|\bfE\|_{H^2(\Om)},
\label{e5.8b}\\
\|\bfE-\widehat\bfE_h\|_{\bH(\curl,\Om)} &\ls h\|\bfE\|_{\bH^1(\curl,\Ome)},
\label{e5.8c}\\
\norme{\bfE-\widehat\bfE_h}&\ls h\, (1+\ga_1)^\frac12 \|\bfE\|_{\bH^1(\curl,\Ome)},\label{e5.8d}
\end{align}
where \eqref{e5.8d} can be proved by  \eqref{e5.8c}, the commuting property between the curl-conforming interpolation operator and the div-conforming interpolation
operator \cite[Lemma~8.13]{Monk03},  and the trace inequality. 

Let $\bfPhi_h:=\widetilde\bfE_h - \widehat\bfE_h$ 
and $\bfPsi_h := \bfE-\widehat\bfE_h$,
then $\bfE- \widetilde\bfE_h= \bfPsi_h - \bfPhi_h$. By \eqref{e5.1} we have
\begin{align}\label{e5.9}
b_h(\bfPhi_h,\bfPhi_h) +(\bfPhi_h,\bfPhi_h)_\Ome
=b_h(\bfPsi_h,\bfPhi_h) +(\bfPsi_h,\bfPhi_h)_\Ome.
\end{align}

\medskip
{\em Step 2:} From  Lemma \ref{lem5.1} and \eqref{e5.9} we get
\begin{align}\label{e5.10}
\frac12 \norme{\bfPhi_h}^2 
\leq& \re b_h(\bfPhi_h,\bfPhi_h) -\im b_h(\bfPhi_h,\bfPhi_h)+ \norml{\bfPhi_h}{\Om}^2 \\
=&\re \bigl( b_h(\bfPsi_h,\bfPhi_h) + (\bfPsi_h,\bfPhi_h)_\Ome \bigr) - \im \bigl( b_h(\bfPsi_h,\bfPhi_h) + (\bfPsi_h,\bfPhi_h)_\Ome\bigr)   \nn\\ 
\ls& \norme{\bfPhi_h}\, \norme{\bfPsi_h}.\nn  
\end{align}
Therefore, it follows from 
\eqref{e5.8d} that
\begin{align}\label{e5.11}  
\norme{\bfPhi_h}
&\ls  \norme{\bfPsi_h}\ls  h\, (1+\ga_1)^\frac12 \|\bfE\|_{\bH^1(\curl,\Ome)}, 
\end{align}
which together with the relation $\bfE-\widetilde\bfE_h =\bfPsi_h - \bfPhi_h$ and
the triangle inequality immediately infer \eqref{e5.6}.

\medskip
{\em Step 3:} To show \eqref{e5.7}, we first need the following results that can be proved by following  the proof of
\cite[Proposition 4.5]{HPSS05} and their proofs are omitted: for any $\bfv_h\in \bfV_h$ there exists 
$\bfv_h^c\in \bfV_h\cap \bfH(\curl, \Ome)$ such that
\begin{align} 
\|\bfv_h-\bfv_h^c\|_{\bL^2(\Ome)}^2 
&\leq C\sum_{\cF\in\cE_h^I} h_\cF \norml{\jm{(\bfv_h)_T}}{\cF}^2, \label{e5.11c} \\
\|\curl(\bfv_h-\bfv_h^c)\|_{\bL^2(\cT_h)}^2 &\leq C\sum_{\cF\in\cE_h^I} h_\cF^{-1} \norml{\jm{(\bfv_h)_T}}{\cF}^2 . \label{e5.11d}
\end{align}
Let $\bfPhi_h^c\in \bfV_h\cap \bfH(\curl, \Ome)$
be the conforming approximation of $\bfPhi_h$ as defined above. Then it follows from the definition of the norm $\norm{\cdot}_{DG}$ (cf. \eqref{e3.12}), the above two estimates,  and \eqref{e5.11} that
\begin{align} \label{e5.11a}
\|\bfPhi_h-\bfPhi_h^c\|_{\bL^2(\Ome)}& +h\|\curl(\bfPhi_h-\bfPhi_h^c)\|_{\bL^2(\cT_h)}  \ls\ga_0^{-\frac12} h \norm{\bfPhi_h}_{DG}
\ls h^2 \cR(\bfE).
\end{align}
Noting that
\begin{align*}
&\|\bfE-\widetilde{\bfE}_h\|_{\bL^2(\Ome)}^2 
 = \bigl(\bfE-\widetilde{\bfE}_h, \bfE-\widehat{\bfE}_h \bigr)_\Ome
- \bigl(\bfE-\widetilde{\bfE}_h, \bfPhi_h^c \bigr)_\Ome 
- \bigl(\bfE-\widetilde{\bfE}_h, \bfPhi_h-\bfPhi_h^c \bigr)_\Ome,
\end{align*}
we have
\begin{align*}
\|\bfE-\widetilde{\bfE}_h\|_{\bL^2(\Ome)}& 
\leq \|\bfE-\widehat{\bfE}_h\|_{\bL^2(\Ome)} - 
\frac{\bigl(\bfE-\widetilde{\bfE}_h, \bfPhi_h^c \bigr)_\Ome}{\norml{\bfE-\widetilde\bfE_h}{\Ome}} 
+ \|\bfPhi_h-\bfPhi_h^c\|_{\bL^2(\Ome)},
\end{align*}
which together with \eqref{e5.8a} and \eqref{e5.11a} yields
\begin{align}\label{e5.11e}
\|\bfE-\widetilde{\bfE}_h\|_{\bL^2(\Ome)}
\ls  h^2 \cR(\bfE)
-\frac{\bigl(\bfE-\widetilde{\bfE}_h, \bfPhi_h^c \bigr)_\Ome}{\norml{\bfE-\widetilde\bfE_h}{\Ome}}.  
\end{align}

\medskip
{\em Step 4:} We need to bound the last term on the 
right-hand side of \eqref{e5.11e}. Notice that 
$\bfPhi_h^c\in \bfV_h\cap \bfH(\curl,\Ome)$, by 
using a standard duality argument, see Appendix, 
based on the Helmholtz decomposition of $\bfPhi_h^c$, we can show that
\begin{align} \label{e5.11f}
-\frac{\bigl(\bfE-\widetilde{\bfE}_h, \bfPhi_h^c \bigr)_\Ome}{\norml{\bfE-\widetilde\bfE_h}{\Ome}}\ls  (1+\ga_1)h^2\cR(\bfE). 
\end{align}

\medskip
{\em Step 5:} The desired estimate \eqref{e5.7} follows from combing 
\eqref{e5.11e} and \eqref{e5.11f}. Finally, \eqref{e5.8} follows from $\|\bfE-\widetilde\bfE_h\|_{\bL^2(\Ga)}\le\|\bfE-\widehat\bfE_h\|_{\bL^2(\Ga)}+\|\widehat\bfE_h-\widetilde\bfE_h\|_{\bL^2(\Ga)}$, \eqref{e5.8b}, the trace inequality, \eqref{e5.8a}, and \eqref{e5.7}.  The proof is complete.
\end{proof}

\begin{remark}
The $P_1$-conforming N\'ed\'elec edge element (of second type) 
projection $\widehat\bfE_h$ of $\bfE$ 
is introduced and used in the proof to simplify the analysis at the expense
of requiring $\cT_h$ to be a quasi-uniform and conforming mesh. 
We note that the proof is still valid if one replaces the 
$P_1$-conforming N\'ed\'elec edge element 
projection by the $P_1$-IPDG projection without assuming $\cT_h$ 
is a quasi-uniform or conforming mesh. As expected, the new 
proof will be more complicated and technical, and is left for
the interested reader to explore. 
\end{remark}

\subsection{Error estimates for IPDG method \eqref{e3.17}}
The goal of this subsection is to derive error estimates for scheme 
\eqref{e3.17}.  Instead of using the well-known Schatz argument 
\cite{Schatz74,HPSS05,HPS04,ZSWX09},
which is the (only) technique of choice for deriving error estimates 
for indefinite problems in the literature, we shall obtain
our error estimates by exploiting the linearity of the Maxwell equations 
and making strong use of the discrete stability estimates proved in 
Theorem \ref{discrete_stability} 
and the projection error estimates established in Lemma \ref{lem5.3}. 
This new technique, which is adapted from \cite{fw08a}, allows 
us to derive error estimates for 
$\bfe_h:=\bfE-\bfE_h$ without imposing any mesh constraint.

It is easy to check that there holds the following error equation:
\begin{equation*}
a_h(\bfe_h,\bfv_h)=0 
\quad\forall \bfv_h\in \bfV_h.
\end{equation*}
Let $\bfeta_h:= \bfE-\widetilde\bfE_h$ and $\bfxi_h:= \bfE_h-\widetilde\bfE_h$,  
then $\bfe_h=\bfeta_h-\bfxi_h$. From \eqref{e5.1} we get
\begin{align}\label{e5.13}
a_h(\bfxi_h,\bfv_h) &=a_h(\bfeta_h,\bfv_h) =b_h(\bfeta_h,\bfv_h) -k^2 (\bfeta_h,\bfv_h)_\Ome
-\i\lambda \Langle (\bfeta_h)_T,(\bfv_h)_T \Rangle_\Ga  \\
&=-(k^2+1) (\bfeta_h,\bfv_h)_\Ome-\i\lambda \Langle (\bfeta_h)_T,(\bfv_h)_T \Rangle_\Ga \qquad\forall \bfv_h\in \bfV_h, \nn
\end{align}
The above equation implies that $\bfxi_h\in \bfV_h$ is the solution 
of scheme \eqref{e3.17} with the source functions
$\bff = -(k^2+1) \bfeta_h$ and $\bfg=-\la(\bfeta_h)_T$. Hence, an application of
Theorem~\ref{discrete_stability} and Lemma~\ref{lem5.3} immediately
infers the following estimate for $\bfxi_h$.
 
\begin{lemma}\label{lem5.4}
Under the conditions of Lemma~\ref{lem5.1}, there holds
\begin{align}\label{e5.14} 
&\|\bfxi_h\|_{DG} + k \|\bfxi_h\|_{\bL^2(\Ome)} 
 \ls \hcsta (1+\ga_1) \big(k^2 h^2+\la\, h^{\frac32}\big) \cR(\bfE),
\end{align}
where
\begin{align}\label{e5.15}
\hcsta:= \max\Big(k^{-1}(1+\ga_h), \big(\la^{-1}(1+\ga_h)\big)^{\frac12}\Big), 
\end{align}
and $\ga_h$ is defined by \eqref{e4.5b}.

\end{lemma}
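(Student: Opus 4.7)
The plan is to exploit the identification, already recorded in \eqref{e5.13}, that $\bfxi_h$ is itself the IPDG solution produced by scheme \eqref{e3.17} with source data
\begin{equation*}
\bff = -(k^2+1)\bfeta_h, \qquad \bfg = -\lambda\,(\bfeta_h)_T.
\end{equation*}
Thus, rather than repeating any new stability argument, I would simply feed this manufactured right-hand side into Theorem~\ref{discrete_stability} and then convert the resulting control of $\bfxi_h$ into the claimed bound by invoking Lemma~\ref{lem5.3} for the projection errors $\|\bfeta_h\|_{\bL^2(\Ome)}$ and $\|(\bfeta_h)_T\|_{\bL^2(\Gamma)}$.

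Concretely, I would proceed in three short steps. First, apply both estimates of Theorem~\ref{discrete_stability} to the data pair $(\bff,\bfg)$ above, which yields
\begin{align*}
\norml{\curl \bfxi_h}{\cT_h} + k\norml{\bfxi_h}{\Ome}
&\ls k^{-1}\ga_h (k^2+1)\norml{\bfeta_h}{\Ome}
 + (\la\ga_h)^{1/2}\norml{(\bfeta_h)_T}{\Ga}, \\
\bigl(\cJ_0(\bfxi_h,\bfxi_h)+\cJ_1(\bfxi_h,\bfxi_h)\bigr)^{1/2}
&\ls k^{-1}\ga_h^{1/2}(k^2+1)\norml{\bfeta_h}{\Ome}
 + \la^{1/2}\norml{(\bfeta_h)_T}{\Ga}.
\end{align*}
Adding the two inequalities reconstitutes $\|\bfxi_h\|_{DG}+k\|\bfxi_h\|_{\bL^2(\Ome)}$ on the left (modulo the absent boundary-trace term, which is harmless here since we do not need it in the statement).

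Second, I would insert the $\bL^2$ and boundary estimates from Lemma~\ref{lem5.3},
\begin{equation*}
\norml{\bfeta_h}{\Ome}\ls h^2(1+\ga_1)\cR(\bfE),
\qquad
\norml{(\bfeta_h)_T}{\Ga}\ls h^{3/2}(1+\ga_1)\cR(\bfE),
\end{equation*}
to produce a right-hand side of the form
\begin{equation*}
(1+\ga_1)\bigl[(k^{-1}\ga_h+k^{-1}\ga_h^{1/2})(k^2+1)\,h^2
 + ((\la\ga_h)^{1/2}+\la^{1/2})\,h^{3/2}\bigr]\cR(\bfE).
\end{equation*}
Third, I would absorb the various $k$, $\la$ and $\ga_h$ prefactors into $\hcsta$. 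The two inequalities
\begin{equation*}
k^{-1}(1+\ga_h)(k^2+1)\,h^2\ls \hcsta\, k^2 h^2,
\qquad
\bigl(\la^{-1}(1+\ga_h)\bigr)^{1/2}\la\, h^{3/2}\ls \hcsta\,\la h^{3/2},
\end{equation*}
follow directly from the definition \eqref{e5.15}; combining them delivers \eqref{e5.14}.

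The only step that requires any care is the last bookkeeping step: one has to check that both the curl/$\bL^2$ branch (which costs a factor $\ga_h$) and the jump branch (which costs only $\ga_h^{1/2}$) are uniformly dominated by $\hcsta\,k^2 h^2$ in the volume contribution and by $\hcsta\,\la h^{3/2}$ in the boundary contribution, for arbitrary positive $k$, $\la$, and $\ga_h$. This is the main (mild) obstacle; everything else is a mechanical substitution of Theorem~\ref{discrete_stability} and Lemma~\ref{lem5.3} into \eqref{e5.13}.
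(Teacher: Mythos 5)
Your proposal is correct and takes essentially the same route as the paper's own (one-sentence) proof: the paper likewise reads off from \eqref{e5.13} that $\bfxi_h$ solves scheme \eqref{e3.17} with the manufactured data $\bff=-(k^2+1)\bfeta_h$, $\bfg=-\la(\bfeta_h)_T$, and then cites Theorem~\ref{discrete_stability} together with Lemma~\ref{lem5.3} to conclude. The only caveat is in your final bookkeeping inequality $k^{-1}(1+\ga_h)(k^2+1)h^2\ls \hcsta k^2h^2$, which tacitly uses $k\gtrsim 1$ to absorb $(k^2+1)$ into $k^2$; the paper makes the same tacit assumption (made explicit only later in Corollary~\ref{cor_main}), so this is not a deviation from its argument.
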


By Lemmas \ref{lem5.3} and \ref{lem5.4} and the triangle inequality 
we then obtain the following main theorem of this section.

\begin{theorem}\label{main_theorem}
Let $\bfE$ and $\bfE_h$ be the solutions to problem \eqref{e1.1}--\eqref{e1.2}
and scheme \eqref{e3.17}, respectively. Assume $\bfE\in \bH^2(\Ome)$. 
Then, under the conditions of Lemma~\ref{lem5.1},  there hold the following error estimates:
\begin{align}\label{e5.16}
&\|\bfE-\bfE_h\|_{DG}\ls  \big(h +\hcsta (1+\ga_1) \big(k^2 h^2+\la\, h^{\frac32}\big)\big) \, \cR(\bfE),\\
&\|\bfE-\bfE_h\|_{\bL^2(\Ome)}
\ls \bigl(h^2+\hcsta k^{-1} \big(k^2 h^2+\la\, h^{\frac32}\big)\bigr) (1+\ga_1)
\, \cR(\bfE). \label{e5.17}
\end{align}
\end{theorem}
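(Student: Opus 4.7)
The plan is essentially to combine the two preceding lemmas via the triangle inequality, since the real work has already been done in Lemma \ref{lem5.3} (approximation error of the elliptic projection) and Lemma \ref{lem5.4} (discrete stability applied to the auxiliary problem solved by $\bfxi_h$). Specifically, I would decompose
\begin{equation*}
\bfE-\bfE_h = \bfeta_h - \bfxi_h, \qquad \bfeta_h := \bfE-\widetilde{\bfE}_h,\ \bfxi_h := \bfE_h - \widetilde{\bfE}_h,
\end{equation*}
and estimate each piece separately in the appropriate norm. The use of the $\bH(\curl,\Ome)$-elliptic projection $\widetilde{\bfE}_h$ rather than an interpolant is what makes this clean: the Galerkin orthogonality together with \eqref{e5.1} turns the error equation for $\bfxi_h$ into a discrete problem of the form \eqref{e3.17} with right-hand side data controlled by $\bfeta_h$, as already exploited in Lemma \ref{lem5.4}.

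For the DG-norm bound, I would first observe that $\|\bfeta_h\|_{DG}\le\norme{\bfeta_h}$ by the definition \eqref{e3.13}, so Lemma \ref{lem5.3} yields
\begin{equation*}
\|\bfeta_h\|_{DG}\ls h\,(1+\ga_1)^{1/2}\|\bfE\|_{\bH^1(\curl,\Ome)}\ls h\,\cR(\bfE).
\end{equation*}
Combining with the Lemma \ref{lem5.4} bound $\|\bfxi_h\|_{DG}\ls\hcsta(1+\ga_1)(k^2 h^2+\la h^{3/2})\cR(\bfE)$ and the triangle inequality immediately gives \eqref{e5.16}.

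For the $L^2$-norm bound, I would invoke the $L^2$ estimate from Lemma \ref{lem5.3}, namely $\|\bfeta_h\|_{\bL^2(\Ome)}\ls h^2(1+\ga_1)\cR(\bfE)$, and the stability bound of Lemma \ref{lem5.4} in the form $k\|\bfxi_h\|_{\bL^2(\Ome)}\ls\hcsta(1+\ga_1)(k^2 h^2+\la h^{3/2})\cR(\bfE)$, dividing through by $k$ to isolate $\|\bfxi_h\|_{\bL^2(\Ome)}$. The triangle inequality then yields \eqref{e5.17}.

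Since Lemmas \ref{lem5.3}--\ref{lem5.4} do all the heavy lifting, there is no real obstacle here: the only mild subtlety is bookkeeping the $(1+\ga_1)$ and $\hcsta$ factors so that the two contributions are consolidated into the stated bound, and noting that the $h$ term in \eqref{e5.16} dominates the pure projection contribution $h(1+\ga_1)^{1/2}\cR(\bfE)$ within the convention $(1+\ga_1)^{1/2}\le 1+\ga_1$ used throughout the estimate.
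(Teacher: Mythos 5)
Your proposal is correct and is exactly the paper's argument: the paper proves Theorem \ref{main_theorem} precisely by combining Lemma \ref{lem5.3} (for $\bfeta_h=\bfE-\widetilde\bfE_h$) and Lemma \ref{lem5.4} (for $\bfxi_h=\bfE_h-\widetilde\bfE_h$) via the triangle inequality, with the projection term absorbed into $h\,\cR(\bfE)$ through the definition \eqref{e5.7a}. (One cosmetic note: the projection contribution is $h(1+\ga_1)^{1/2}\|\bfE\|_{\bH^1(\curl,\Ome)}\le h\,\cR(\bfE)$ directly from \eqref{e5.7a}, as in your earlier display, not via the comparison $(1+\ga_1)^{1/2}\le 1+\ga_1$ stated in your last sentence.)
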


To bound $\cR(\bfE)$ in terms of the source functions $\bff$ and $\bfg$,
we need to bound $\|\bfE\|_{\bfH^2(\Ome)}$ and $\|\bfE\|_{\bfH^1(\curl,\Ome)}$ 
by the source functions.  To the end, we appeal to the solution estimate 
\eqref{e2.100a} to get
\begin{align}\label{e5.18}
\cR(\bfE) \ls  (\lambda+k)M(\bff,\bfg) + \|\bfg\|_{H^{\frac12}(\Gamma)} 
\end{align}

Substituting \eqref{e5.18} into \eqref{e5.16} and \eqref{e5.17} yields 
the following explicit in all parameter error bounds for $\bfE-\bfE_h$.

\begin{corollary}\label{cor_main} Suppose $k, \la\gtrsim 1$, and $ 0<\ga_1\ls 1$.
Under the assumptions of Theorem \ref{main_theorem}, there exist constants $C_1$ and $C_2$ independent of $k, \la$, and $h$ such that
\begin{align}\label{e5.19}
\|\bfE-\bfE_h\|_{DG}&\le   C_1(k+\la) h + C_2 \hcsta (k+\la)\big(k^2 h^2+\la\, h^{\frac32}\big),  \\
\|\bfE-\bfE_h\|_{\bL^2(\Ome)}
&\le C_1(k+\la) h^2+ C_2 \hcsta k^{-1}(k+\la)\big(k^2 h^2+\la\, h^{\frac32}\big).\label{e5.20}
\end{align}
\end{corollary}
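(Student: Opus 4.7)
The plan is to derive the corollary as a direct specialization of Theorem \ref{main_theorem} combined with the regularity bound \eqref{e5.18}. The proof is essentially a substitution argument, where the hypotheses $k,\lambda \gtrsim 1$ and $\gamma_1 \lesssim 1$ collapse several auxiliary factors into absolute constants.

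First, I would simplify the prefactors appearing in \eqref{e5.16} and \eqref{e5.17}. Under the assumption $0 < \gamma_1 \lesssim 1$, the factor $(1+\gamma_1)$ is bounded by a universal constant, so it can be absorbed into the generic constant hidden in $\lesssim$. This yields simpler versions of the error bounds, namely
\begin{align*}
\|\bfE-\bfE_h\|_{DG} &\lesssim \bigl(h + \hcsta(k^2 h^2 + \lambda h^{3/2})\bigr)\, \cR(\bfE),\\
\|\bfE-\bfE_h\|_{\bL^2(\Ome)} &\lesssim \bigl(h^2 + \hcsta k^{-1}(k^2 h^2 + \lambda h^{3/2})\bigr)\, \cR(\bfE).
\end{align*}

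Second, I would apply the regularity estimate \eqref{e5.18}, which provides $\cR(\bfE)\lesssim (k+\lambda)M(\bff,\bfg) + \|\bfg\|_{H^{1/2}(\Gamma)}$. Since $k,\lambda\gtrsim 1$, the term $\|\bfg\|_{H^{1/2}(\Gamma)}$ can be majorized by $(k+\lambda)\|\bfg\|_{H^{1/2}(\Gamma)}$, so $\cR(\bfE)$ is controlled by $(k+\lambda)$ times a data-dependent quantity. Treating the data norms as absorbed into the constants $C_1, C_2$ (which are allowed to depend on $\bff, \bfg$ but not on $k,\lambda,h$), we extract the factor $(k+\lambda)$ from $\cR(\bfE)$ and obtain the stated bounds.

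Third, expansion gives the two terms in each estimate. In the $DG$-norm, the $h$ term becomes $C_1(k+\lambda)h$, while the $\hcsta(k^2h^2+\lambda h^{3/2})$ term becomes $C_2 \hcsta(k+\lambda)(k^2h^2+\lambda h^{3/2})$, matching \eqref{e5.19}. The $L^2$ estimate \eqref{e5.20} follows the same pattern using the extra $k^{-1}$ factor that appears in \eqref{e5.17}.

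No serious obstacle is expected; the corollary is genuinely a bookkeeping consequence of the previous results. The only mildly subtle point is the handling of $\|\bfg\|_{H^{1/2}(\Gamma)}$ relative to $M(\bff,\bfg)$: the former is not naturally bounded by the latter, but under $k,\lambda\gtrsim 1$ both can be merged into the multiplicative data constants $C_1$, $C_2$. All the heavy lifting (discrete coercivity, projection error estimates, the non-standard error analysis via $\bfxi_h$ solving the IPDG problem with right-hand side involving $\bfeta_h$) has already been done in Theorems \ref{discrete_coercivity}, \ref{discrete_stability}, \ref{main_theorem}, and Lemmas \ref{lem5.3}, \ref{lem5.4}.
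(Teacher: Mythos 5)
Your proposal is correct and follows exactly the paper's own route: the paper proves the corollary precisely by substituting the regularity bound \eqref{e5.18} into the estimates \eqref{e5.16}--\eqref{e5.17} of Theorem \ref{main_theorem}, with the hypotheses $k,\la\gtrsim 1$ and $0<\ga_1\ls 1$ serving to absorb $(1+\ga_1)$ and the term $\|\bfg\|_{H^{\frac12}(\Gamma)}$ into the constants $C_1$, $C_2$. Your handling of the data norms and of the $\|\bfg\|_{H^{\frac12}(\Gamma)}$ term matches the paper's (implicit) bookkeeping, so nothing further is needed.
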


\begin{remark} \label{r5.3}
(a) If $\la=O(k)$ and $h$ is in the pre-asymptotic range given by $k^2h\gtrsim 1$, then $\la\, h^{\frac32}\ls k\, h^{\frac32}\ls k^2h^2$ and the $H^1$-estimate \eqref{e5.19} becomes
\[\|\bfE-\bfE_h\|_{DG}\le   C_1k h + C_2 \hcsta k^3 h^2.\]

(b) For asymptotic error estimates we refer to \cite[section 7.2]{Monk03}. When $k^3h^2$ is small, it is possible to improve the discrete stability estimates as well as the error estimates via the technique of stability-error iterative improvement from \cite{fw08b, Wu11}. 
\end{remark}

\section{Numerical experiments}\label{sec-6}
Throughout this section, we consider the following Maxwell problem on 
the unit cube $\Om=(0,1)\times(0,1)\times(0,1)$:
\begin{align}
\curl\curl\bfE- k^2\bfE&=\mathbf{0} \qquad\mbox{in }\Omega,\label{e7.1} \\
\curl\bfE\times\bfnu-\i k \bfE_T&=\bfg 
\qquad\mbox{on }\Gamma:=\partial\Ome.\label{e7.2}
\end{align}
where $\bfg$ is so chosen that the exact solution is
$\bfE=\big(e^{\i k z}, e^{\i k x}, e^{\i k y}\big)^T.
$
Notice that we have chosen $\la=k$ for simplicity. 

For any positive integer $m$, let $\cT_{1/m}$ denote the Cartesian mesh that 
consists of $m^3$ congruent cubes of edge length $h=1/m$. We adopt the IPDG method 
using piecewise linear polynomials. We remark that the 
number of total DOFs of the IPDG method on $\cT_{1/m}$ is 
$12m^3$ which is the about twice of that of the corresponding conforming
edge element method (EEM) which uses piecewise trilinear polynomials.

\subsection{Stability}\label{ssec-1} Given a Cartesian mesh $\cT_h$, recall 
that $\bfE_h$ denotes the IPDG solution. Let $\Eheem$ denotes the trilinear conforming
edge element approximation of the problem \eqref{e7.1}--\eqref{e7.2}.
In this subsection, we use the following penalty parameters in
the IPDG method \eqref{e3.17}:
\begin{equation}\label{e7.4}
\ga_{0,\cF}\equiv\ga_0=100\quad\text{ and }\quad \ga_{1,\cF}\equiv\ga_1=0.1 \quad\forall \cF\in\cE_h^I.
\end{equation}

We plot in Figure~\ref{fsta} the following two ratios  
\[
\dfrac{\norm{\bfE_h}_{H(\curl,\cT_h)}}{\norm{\bfE}_{H(\curl,\cT_h)}} \qquad
\mbox{and}\qquad
\dfrac{\norm{\Eheem}_{H(\curl,\cT_h)}}{\norm{\bfE}_{H(\curl,\cT_h)}} 
\]
versus $k$ for $k=1, 2, \cdots, 200$ with $h=0.1, 0.05$, respectively.  It is 
shown that 
\[
\norm{\bfE_h}_{H(\curl,\cT_h)}\ls\norm{\bfE}_{H(\curl,\cT_h)},
\]
which is also implied by Theorem~\ref{discrete_stability} and 
Theorem~\ref{stability}. The $H(\curl)$-norm of the edge element 
solution oscillates for $k$ near $3/h$ but is still bounded 
by $\norm{\bfE}_{H(\curl,\cT_h)}$. 
\begin{figure}[hbt]
\centerline{\includegraphics[scale=0.50]{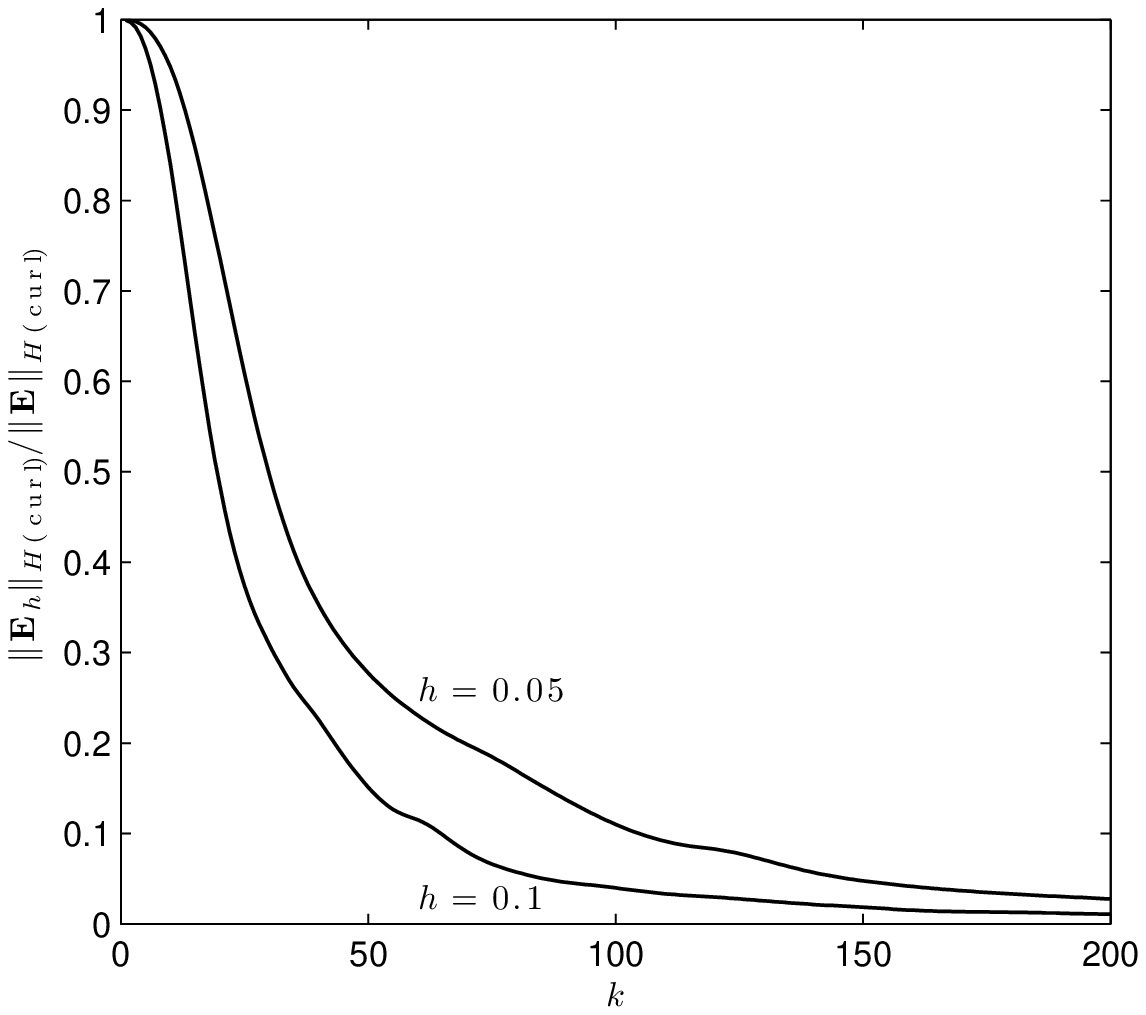} 
\includegraphics[scale=0.50]{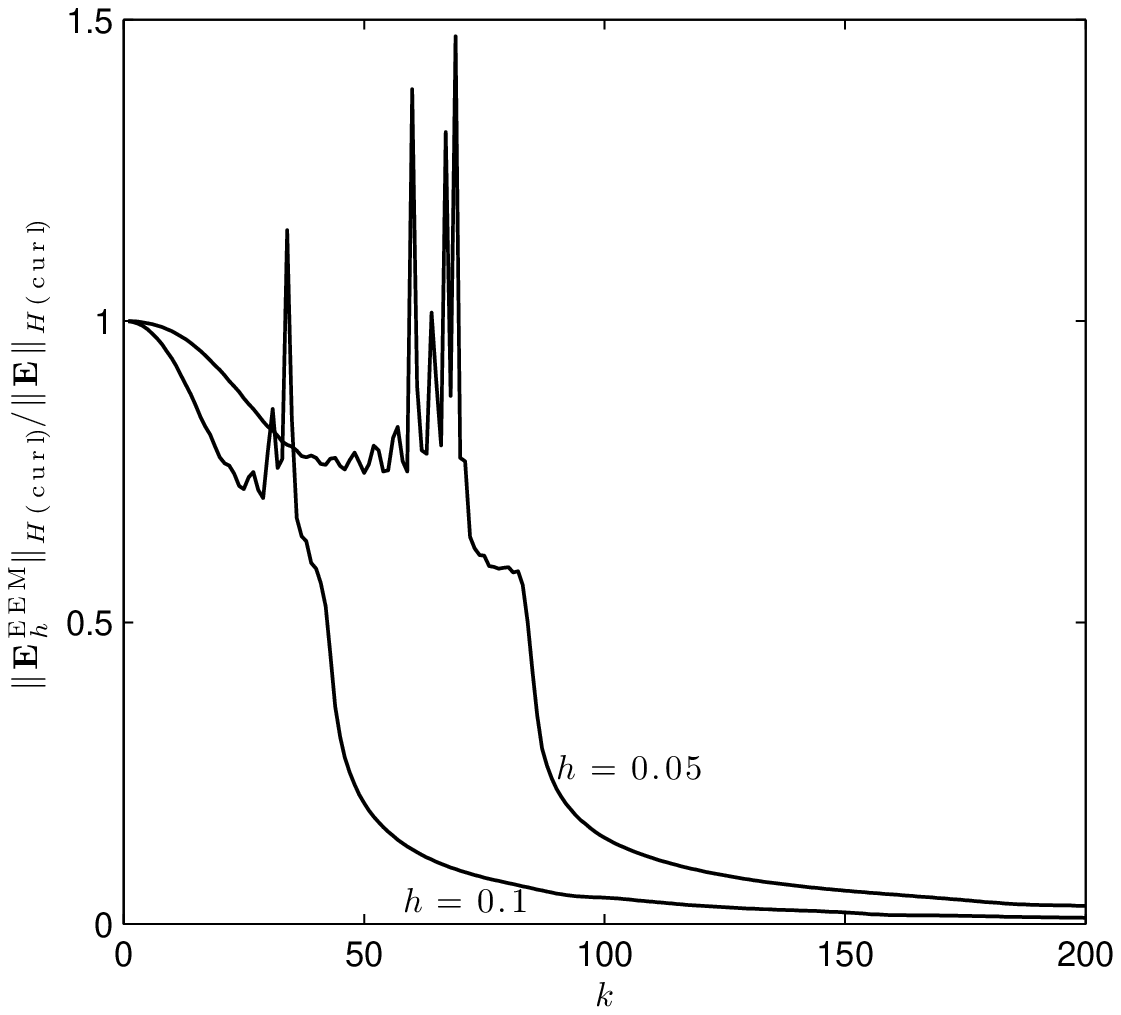}}
\caption{$\norm{\bfE_h}_{H(\curl,\cT_h)}\big/\norm{\bfE}_{H(\curl,\cT_h)}$ (left) 
and $\norm{\Eheem}_{H(\curl,\cT_h)}\big/\norm{\bfE}_{H(\curl,\cT_h)}$ (right) 
versus $k$ for $k=1, 2, \cdots, 200$ with $h=0.1, 0.05$, respectively.}\label{fsta}
\end{figure}

\subsection{Error estimates}\label{ssec-2} In this subsection, we use 
the same penalty parameters as given in \eqref{e7.4}.
In the left graph of Figure~\ref{ferr1}, the relative $H(\curl)$-error 
of the IPDG solution and the relative $H(\curl)$-error of the edge element
interpolant are displayed in one plot. When the mesh size is decreasing, 
the relative error of the
IPDG solution stays around $100\%$ before it is less than $100\%$,
then decays slowly on a range increasing with $k$, and then decays at a
rate greater than $-1$ in the log-log scale but converges as fast as the
edge element interpolant (with slope $-1$) for small $h$. The relative
error grows with $k$ along line $k h=1.$ By contrast, as shown in 
the right of Figure~\ref{ferr1}, the relative error of the finite 
element solution first stay around $100\%$ but oscillates for large $k$, 
then decays at a rate greater than $-1$ in the log-log scale but converges 
as fast as the edge element interpolant (with slope $-1$) for small $h$. 
The relative error of the edge element solution also grows with $k$ 
along line $k h=1$. 
\begin{figure}[ht]
\centerline{
\includegraphics[scale=0.45]{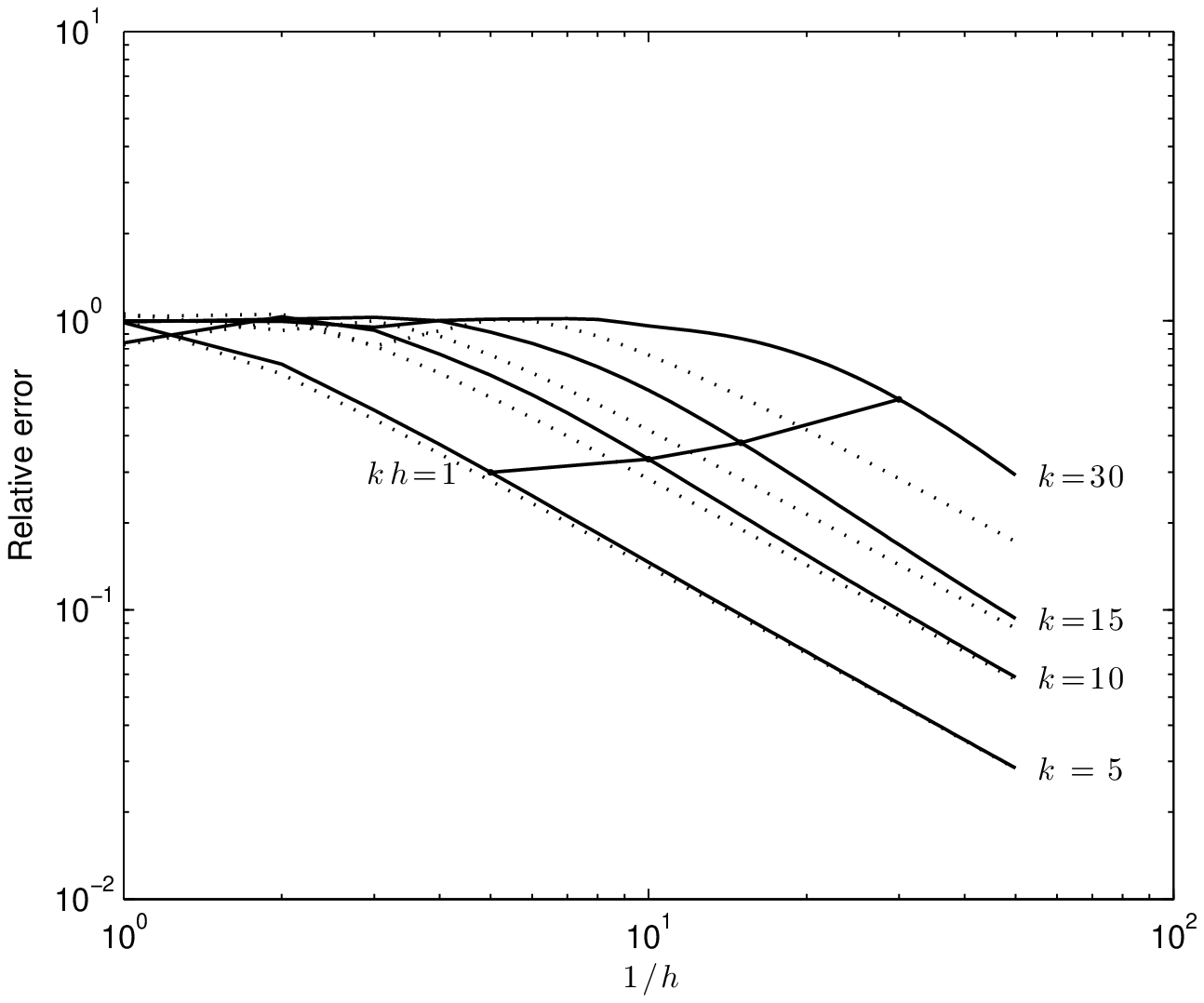}
\includegraphics[scale=0.45]{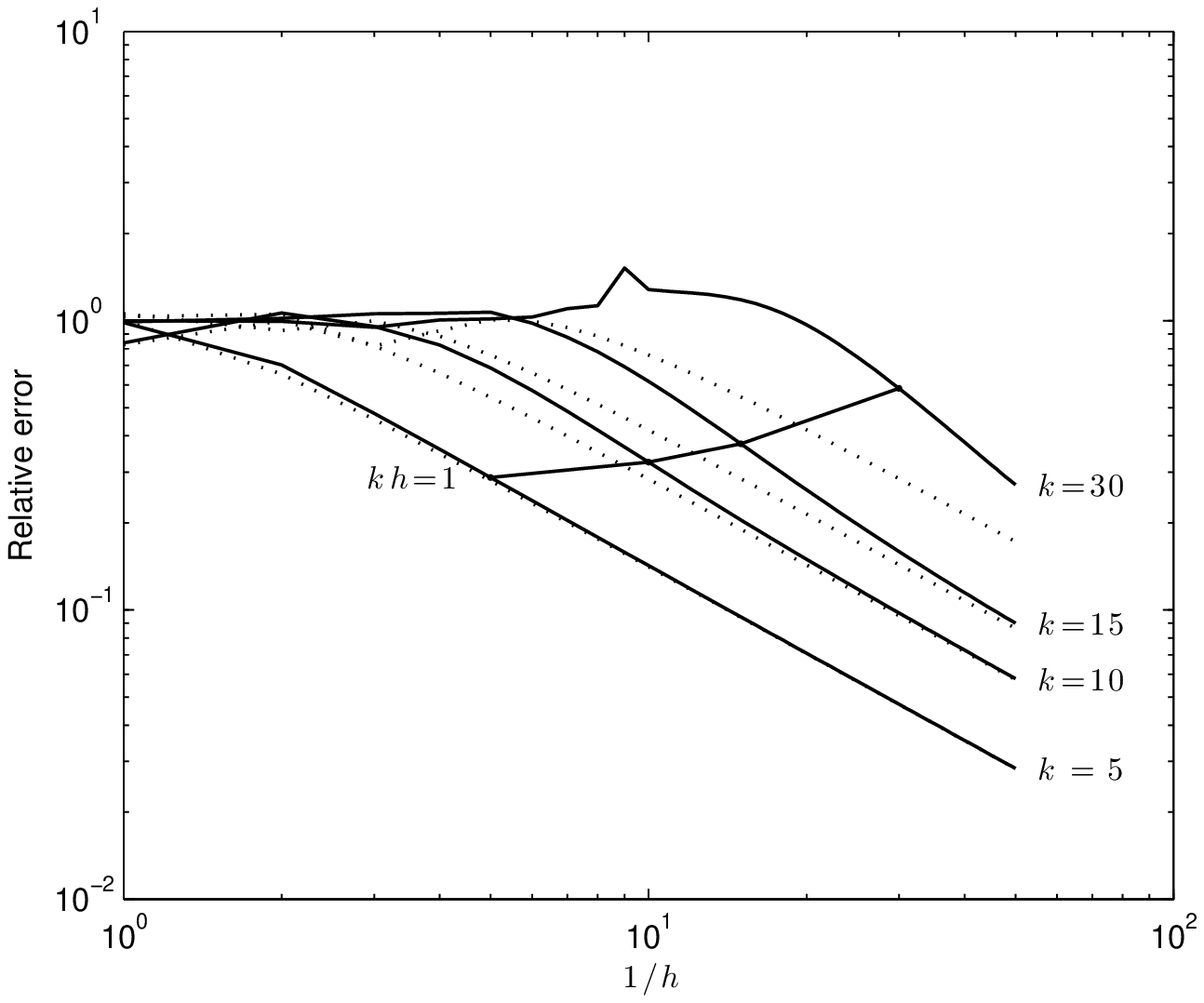}
}
\caption{Left graph: the relative error of the IPDG solution  with parameters
given in \eqref{e7.4} (solid) and the relative error of the edge element
interpolant (dotted) in $H(\curl)$-norm for $k=5, k=10, k=15,$ and $k=30$,
respectively. The dashed line gives reference slope of $-1$. Right graph: 
corresponding plots for edge element solutions.}\label{ferr1}
\end{figure}

Unlike the error of the edge element interpolant, both the error of the 
IPDG solution and that of the edge element solution are not controlled by
the magnitude of $k h$ as indicated by the two graphs in Figure~\ref{ferr2}. 
It is shown that when $h$ is determined according to the ``rule of thumb", 
the relative error of the IPDG solution keeps less than $100\%$  which means 
that the IPDG solution has some accuracy even for large $k$, while the 
edge element solution is unusable for large $k$.   We remark that the 
accuracy of the IPDG solution can be further improved by tuning  
the penalty parameter $\i\ga_1$, see Subsection~\ref{ssec-3} below.

\begin{figure}[ht]
\centerline{
\includegraphics[scale=0.48]{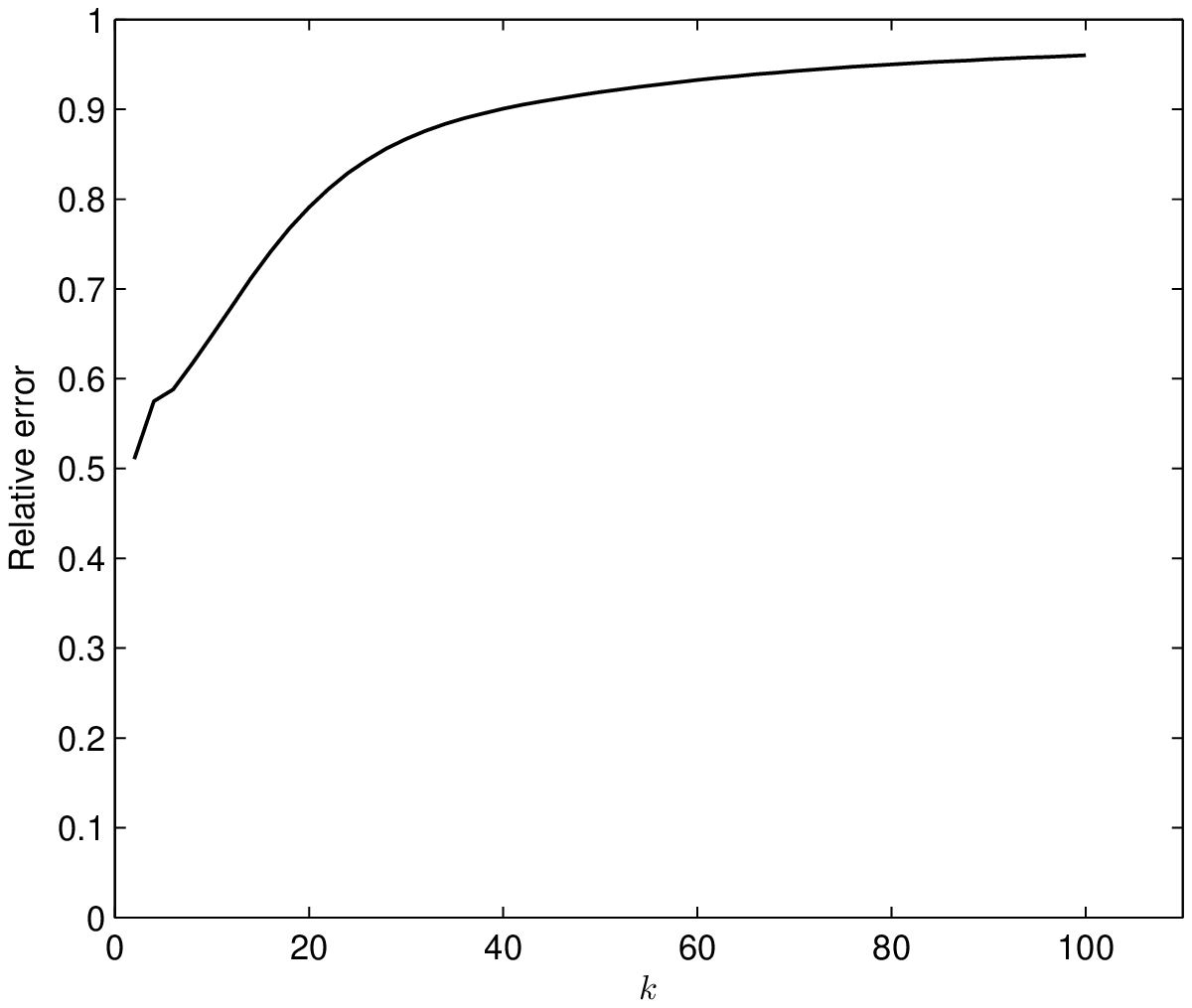}
\includegraphics[scale=0.48]{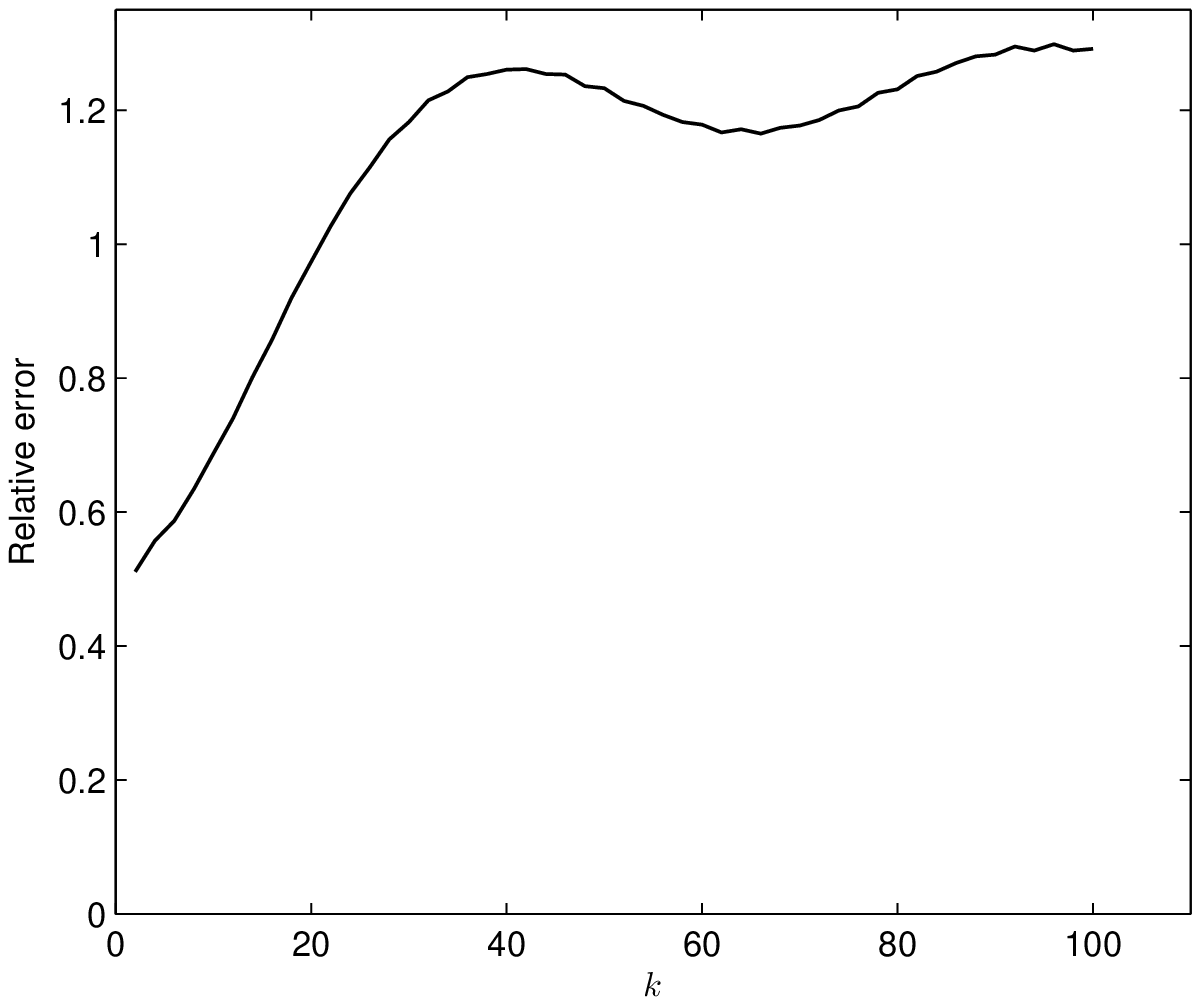}
}
\caption{The relative error of the IPDG solution (left) with parameters given
in \eqref{e7.4} and that of the edge element solution (right) in $H(\curl)$-norm 
computed for $k=2, 4, \cdots, 72$ with mesh size $h$  determined by $kh=2$.}
\label{ferr2}
\end{figure}


Next we verify more precisely the pollution errors. To do so, we recall 
the definition of the critical mesh size with respect to a given relative 
tolerance (cf. \cite[Definition 7.1]{Wu11}). 
\begin{definition}
Given a relative tolerance $\ep$ and a wave number $k$, the critical mesh 
size $h(k,\ep)$ with respect to the relative tolerance $\ep$  is defined 
by the maximum mesh size such that the relative error of the IPDG solution 
(or the edge element solution) in $H(\curl)$-norm is less than or equal to $\ep$.
\end{definition}

It is clear that, if the pollution terms are of order $k^\be h^\al$, 
then $h(k,\ep)$ should be proportional to $k^{-\be/\al}$ for $k$ large 
enough. Figure~\ref{ferr3} which plots $h(k,0.5)$ versus $k$ for the 
IPDG solution (left) with parameters given in \eqref{e7.4} and for the edge 
element solution (right), respectively. They all decay at a 
rate of $O(k^{-3/2})$, just like the linear FEM for the Helmholtz 
problem (cf. \cite{Wu11}). The results of this subsection indicate 
that both methods satisfy the following pre-asymptotic error bounds (cf. Remark~\ref{r5.3}(a)):
\begin{align*}
\Bigl\{\norm{\bfE-\bfE_h}_{H(\curl,\cT_h)},\quad 
\norm{\bfE-\Eheem}_{H(\curl,\Om)} \Bigr\} \le C_1kh+C_2k^3h^2.
\end{align*} 
\begin{figure}[ht]
\centerline{
\includegraphics[scale=0.48]{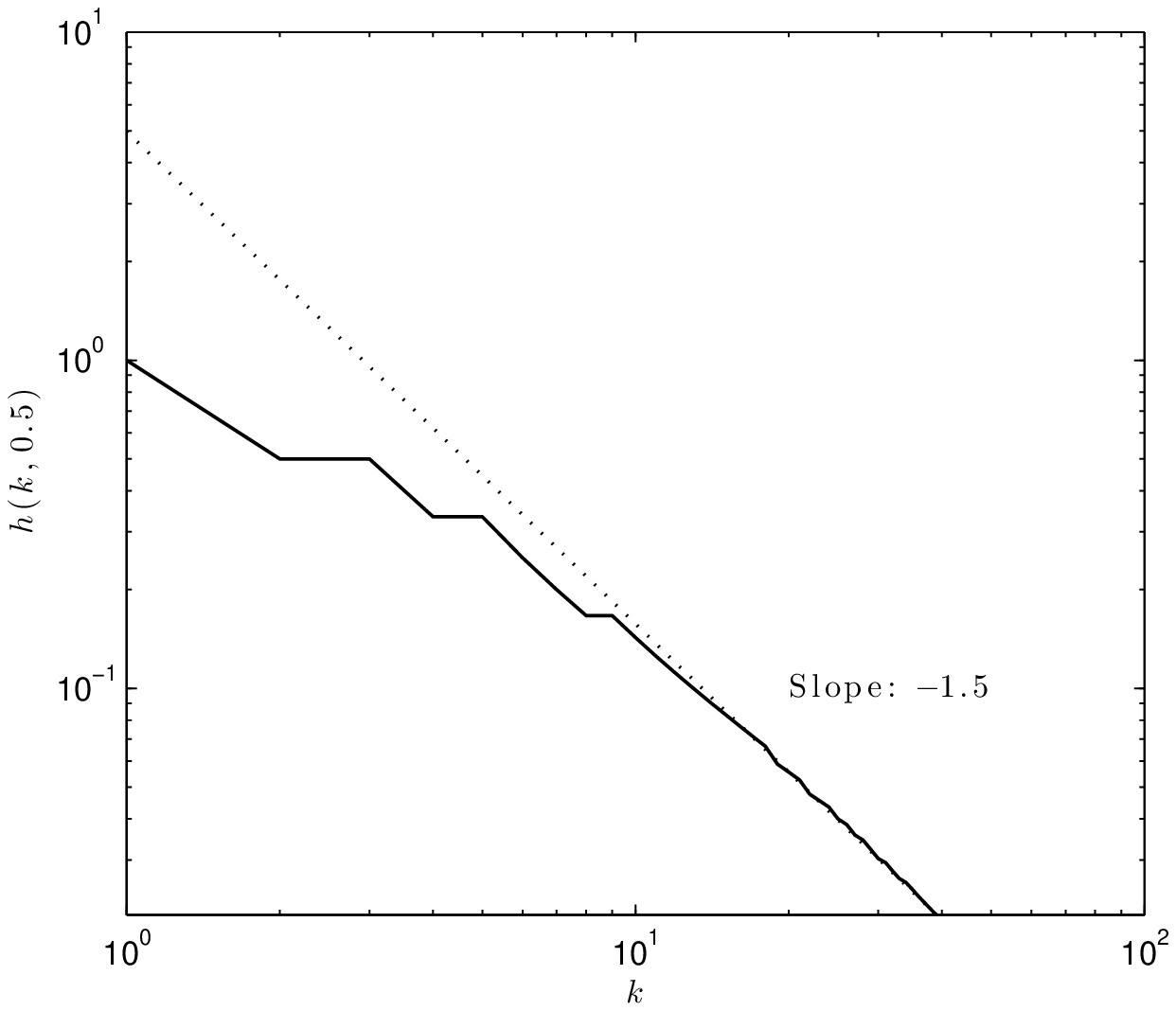}
\includegraphics[scale=0.48]{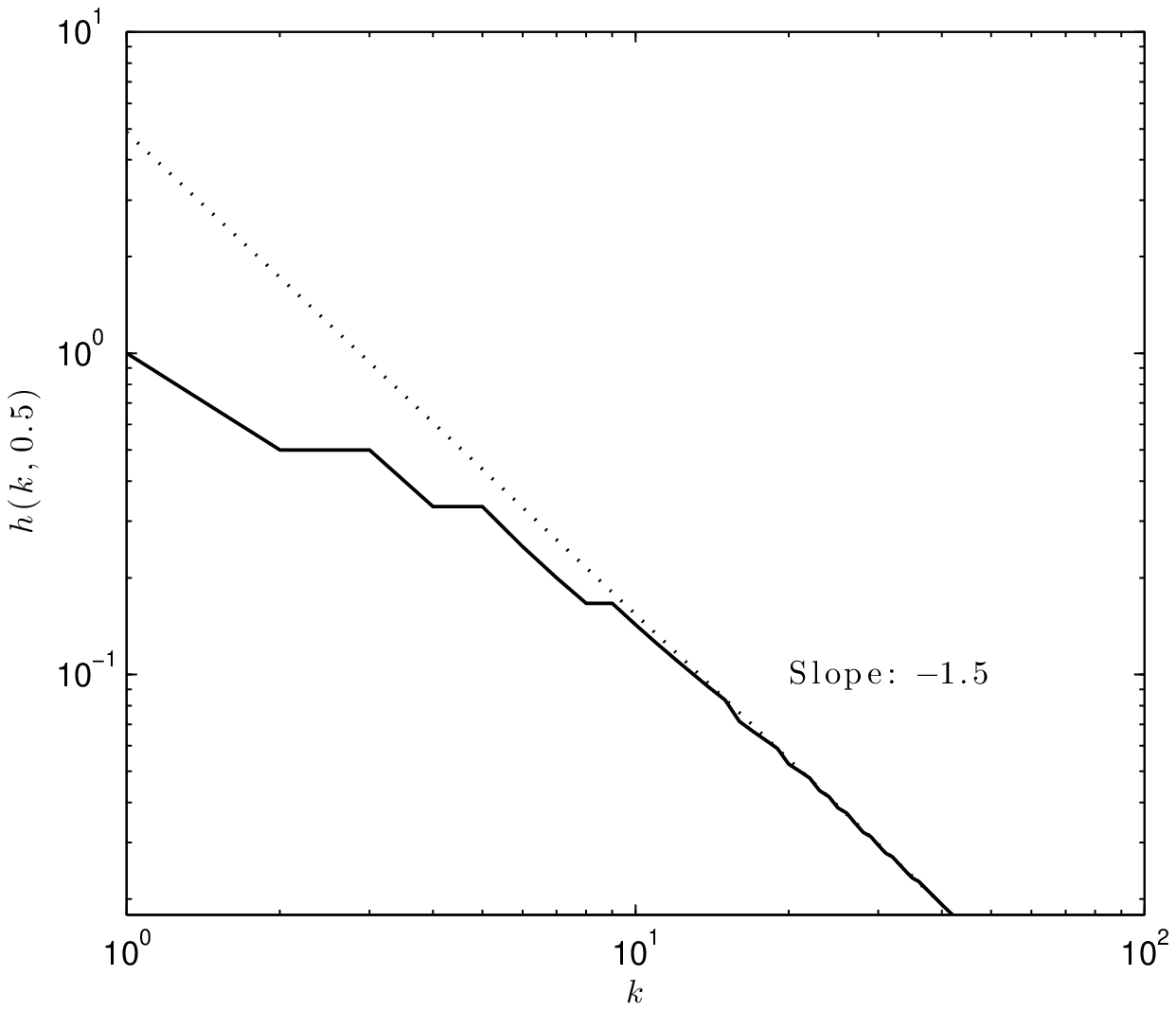}
}
\caption{$h(k,0.5)$ versus $k$ for the IPDG solution (left) with parameters 
given in \eqref{e7.4} and for the edge element solution (right), respectively. 
The dotted lines give lines of slope $-1.5$ in the log-log scale.}\label{ferr3}
\end{figure}

\subsection{Reduction of the pollution effect}\label{ssec-3}
In this subsection, we show that appropriate choice of the penalty parameters
can significantly reduce the pollution error of the IPDG method. 
We use the following parameters:
\begin{equation}\label{e7.8}
\ga_{0,\cF}\equiv\ga_0=100\quad\text{ and }\quad \i\ga_{1,\cF}
\equiv\i\ga_1=0.08+0.01\i \quad\forall \cF\in\cE_h^I.
\end{equation}
We remark that $\i\ga_{1,\cF}$ is simply chosen from the set
$\set{0.01(p+q\i), -50\le p,q\le 50}$ to minimize the relative error of the
IPDG solution in $H(\curl)$-norm with $\ga_0=100$ for
wave number $k=20$ and mesh size $h=1/10$. The optimal penalty parameter 
can also be obtained by the dispersion analysis (cf.  \cite{Ains04}) 
and will be considered in a future work.

The relative error of the IPDG solution with parameters given in \eqref{e7.8}
and the relative error of the edge element interpolant are displayed in the
left graph of Figure~\ref{ferr12b}. The IPDG method with  parameters given
in \eqref{e7.8} is much better than both the IPDG method using  parameters given
in \eqref{e7.4} and the EEM (cf. Figure~\ref{ferr1} and Figure~\ref{ferr2}). 
The relative error does not increase much with the change of $k$ along 
line $k h=1$ for $k\le 30$. But this does not mean that the pollution error
has been eliminated.
\begin{figure}[ht]
\centerline{
\includegraphics[scale=0.46]{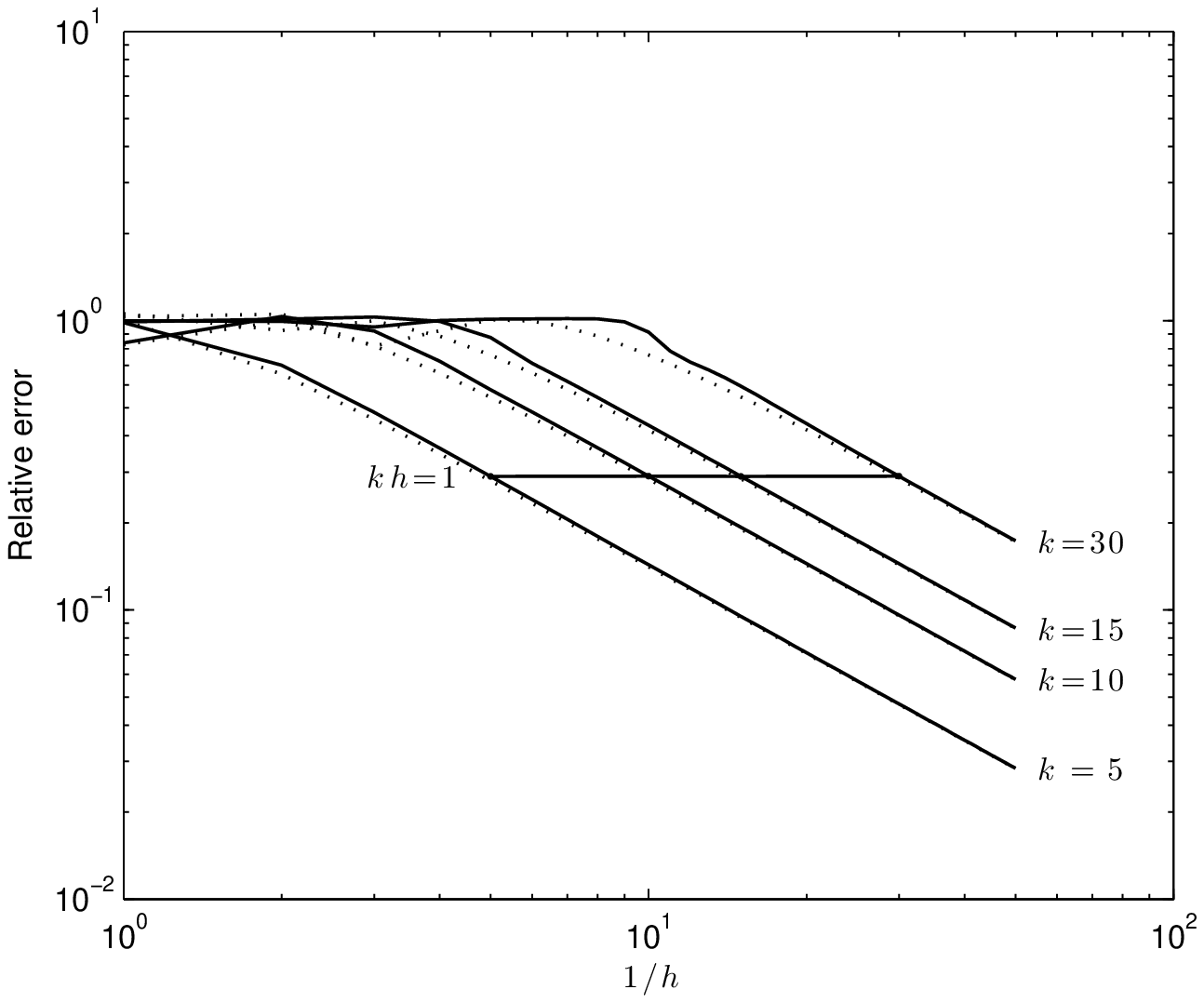}
\includegraphics[scale=0.46]{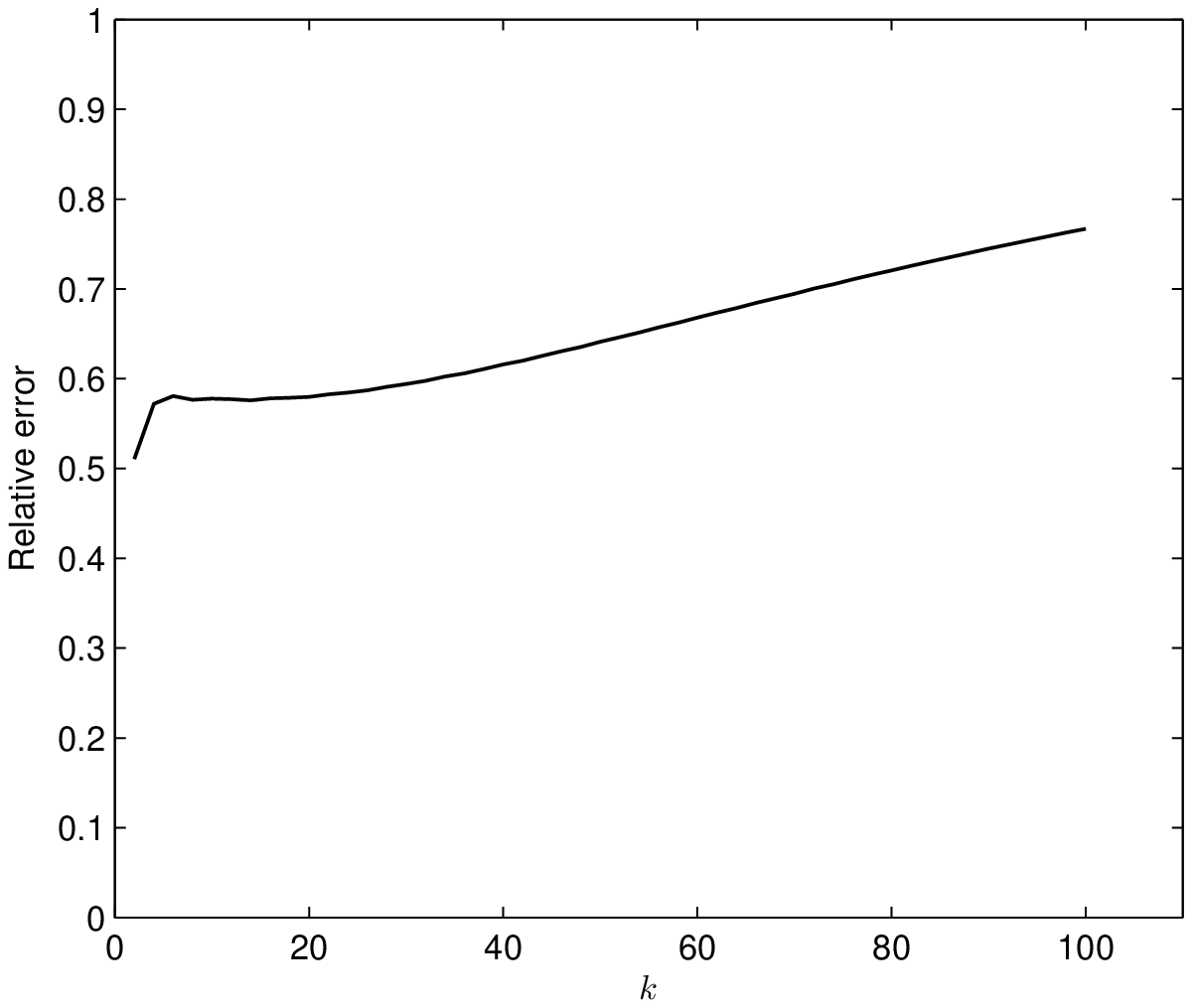}
}
\caption{Left graph: the relative error of the IPDG solution  with parameters given
in \eqref{e7.8} (solid) and the relative error of the edge element
interpolant (dotted) in $H(\curl)$-norm for $k=5, k=10, k=15,$ and $k=30$,
respectively. Right graph: the relative error of the IPDG solution with
parameters given in \eqref{e7.8} in $H(\curl)$-norm computed for $k=2, 4, 
\cdots, 72$ with mesh size $h$ determined by $kh=2$.}\label{ferr12b}
\end{figure}
For more detailed observation, the relative error of the IPDG solution with
parameters given in \eqref{e7.8} in $H(\curl)$-norm computed for $k=2, 4, \cdots, 72$
with mesh size $h$ determined by $kh=2$, are plotted in
the right graph of Figure~\ref{ferr12b}. It is shown that the pollution error is
reduced significantly.

Figure~\ref{ferr3b} plots $h(k,0.5)$, the critical mesh size with respect to the 
relative tolerance $50\%$, versus $k$ for the IPDG method with parameters given
in \eqref{e7.8}. We recall that $h(k,0.5)$ is the maximum mesh size such that 
the relative error of the IPDG solution in $H(\curl)$-norm is less than or 
equal to $50\%$. The decreasing rate of $h(k,0.5)$ in the log-log scale is 
less than $-1.5$, which means that the pollution effect is reduced.  
\begin{figure}[ht]
\centerline{
\includegraphics[scale=0.46]{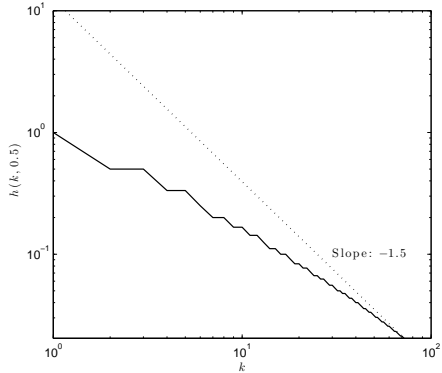}
}
\caption{$h(k,0.5)$ versus $k$ for the IPDG method with parameters given
in \eqref{e7.8}. The dotted line gives a line of slope $-1.5$ in the 
log-log scale.}\label{ferr3b}
\end{figure}

For more detailed comparison between the continuous interior penalty finite 
element method (CIP-FEM) and the FEM, we consider the problem
\eqref{e7.1}--\eqref{e7.2} with wave number $k=36$.
The real parts of $\bfE_{hx}(0.5,0.5,z)$ with parameters given in
\eqref{e7.8} (left, solid), $\bfE_{hx}^{\mathrm{EEM}}(0.5,0.5,z)$ (right, solid), 
and $\bfE_x(0.5,$ $0.5,z)$ (dotted) with mesh sizes $h=1/18$ 
and $1/36$ are plotted in Figure~\ref{fz}. Here $\bfE_{hx}$, 
$\bfE_{hx}^{\mathrm{EEM}}$, and $\bfE_x$ are the $x$ components of the 
IPDG solution,  the edge element solution, and the exact solution, respectively.
The shape of the IPDG solution is roughly same as that of the exact solution
for $h=1/18$ and matches very well for $h=1/36$. While the edge element 
solution has a wrong shape for $h=1/18$ and $z>0.5$ and has a correct
shape for $h=1/36$ but suffers an apparent phase error. 
\begin{figure}[ht]
\centerline{\includegraphics[scale=0.48]{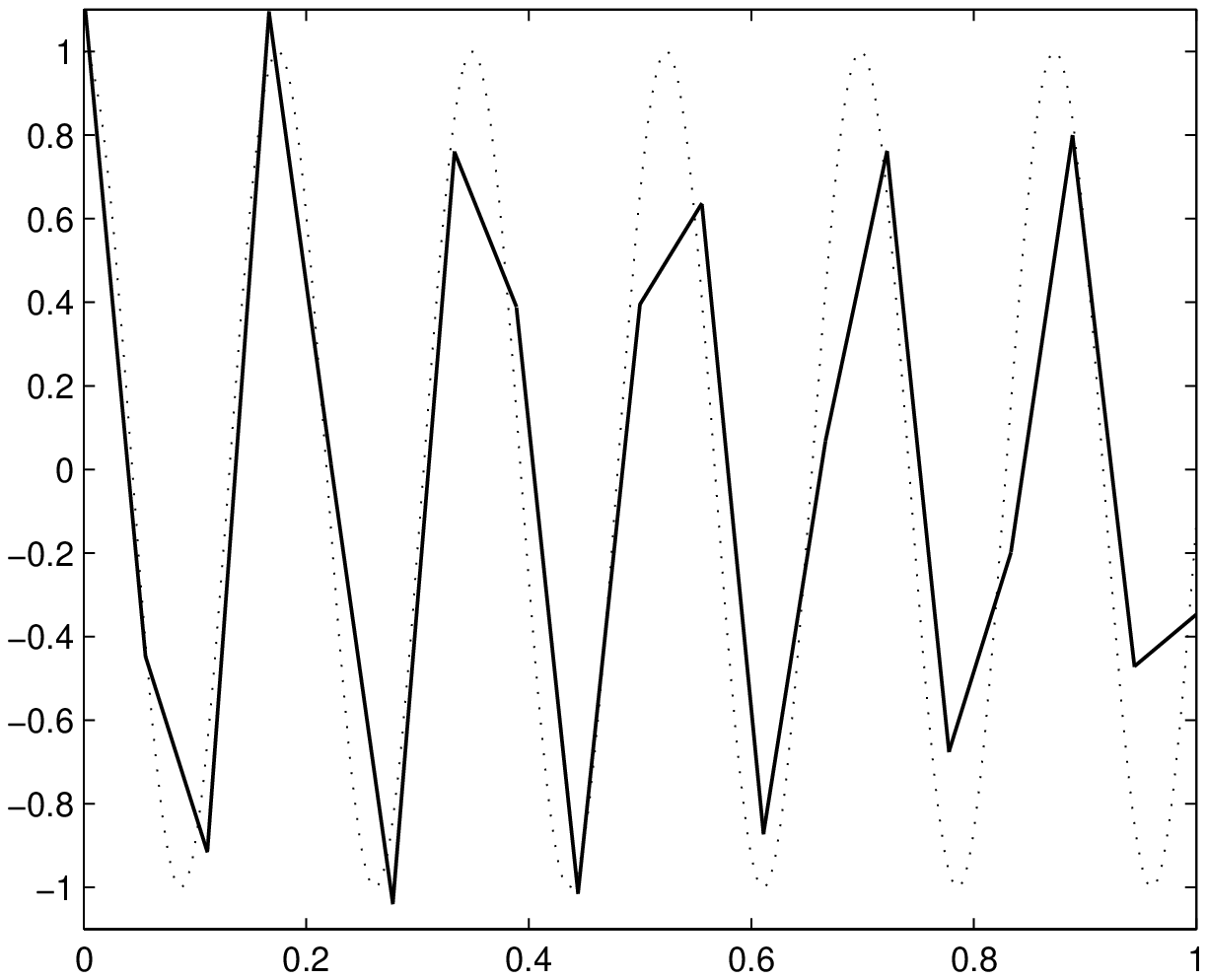}
\includegraphics[scale=0.46]{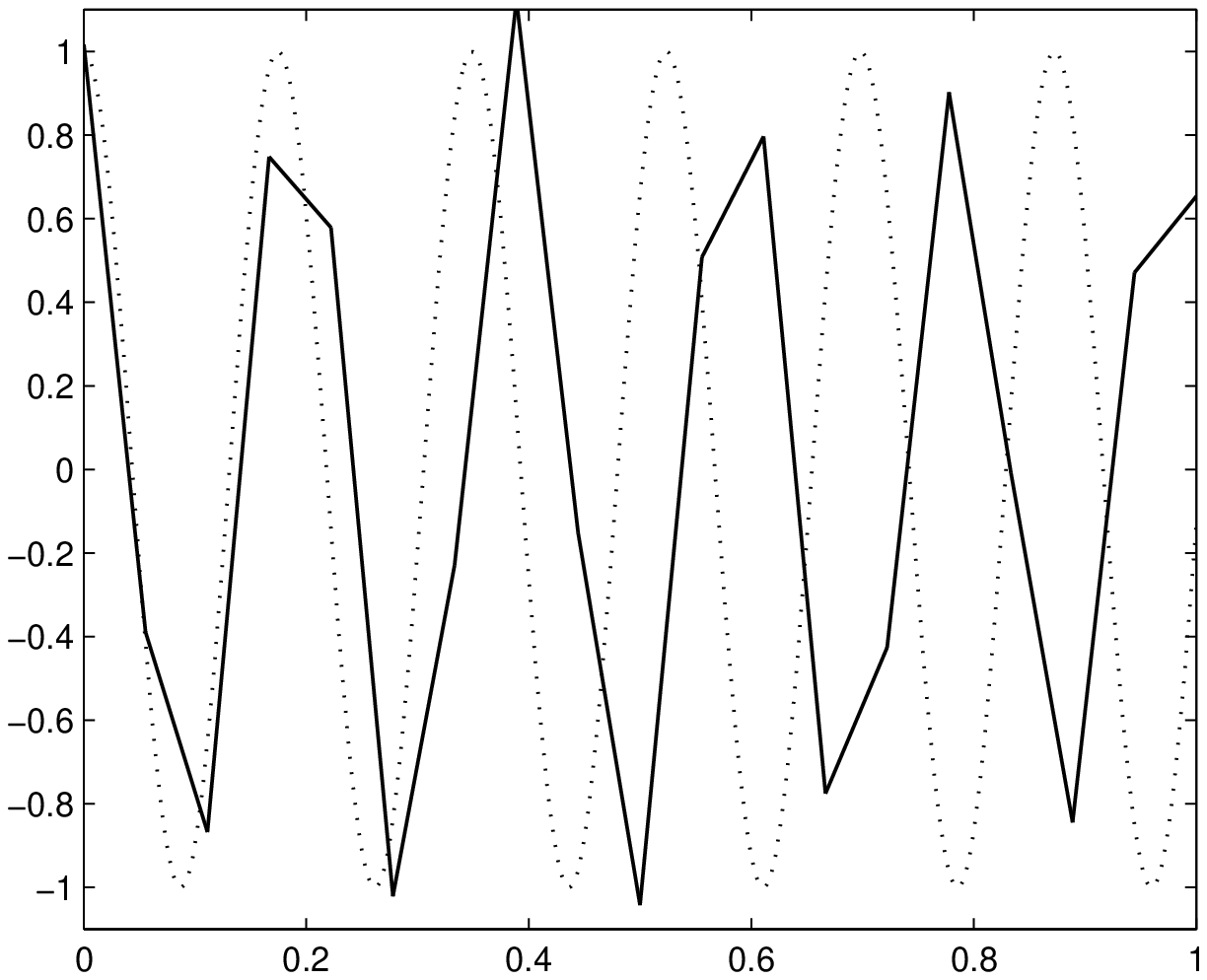}}
\centerline{\includegraphics[scale=0.48]{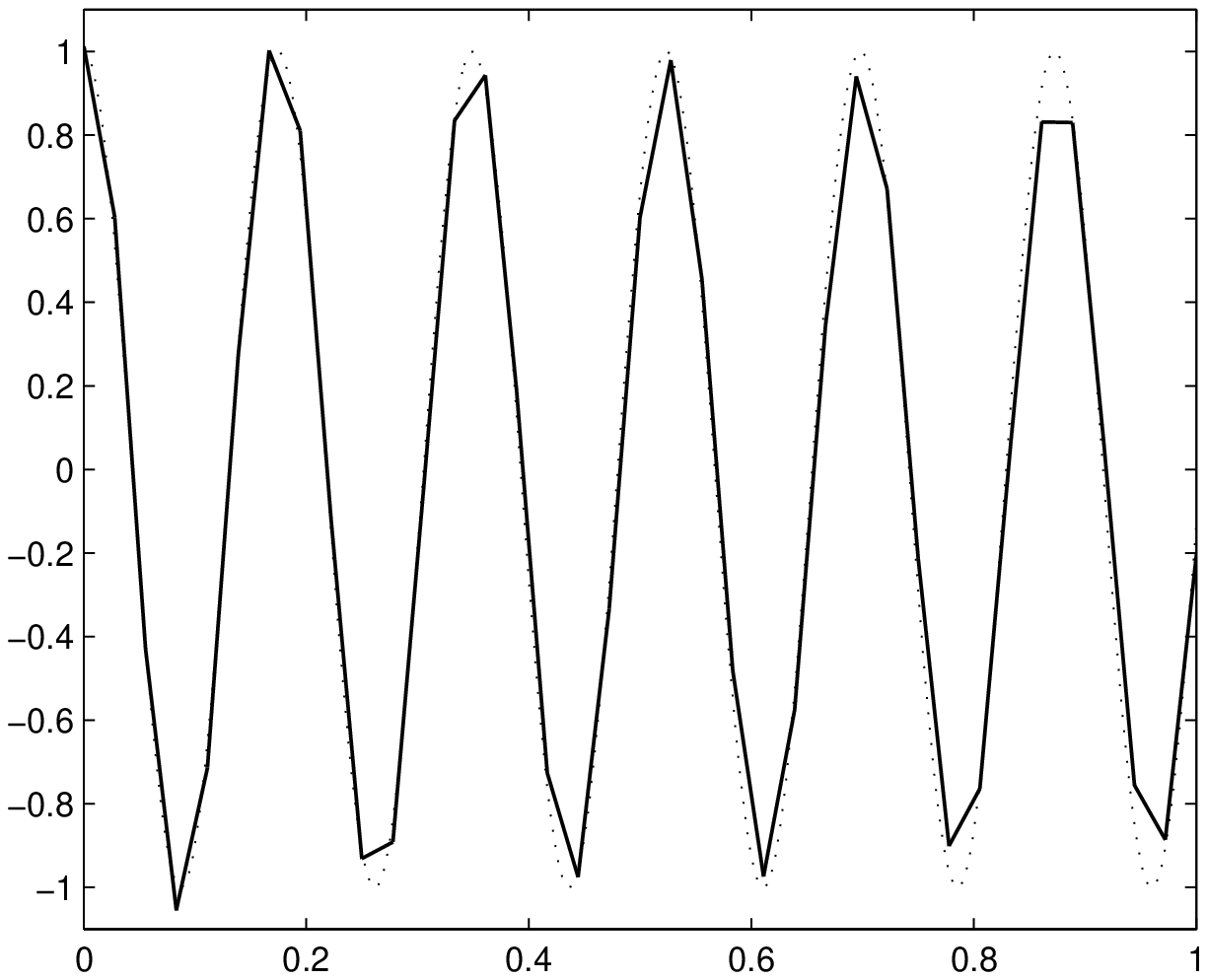}
\includegraphics[scale=0.46]{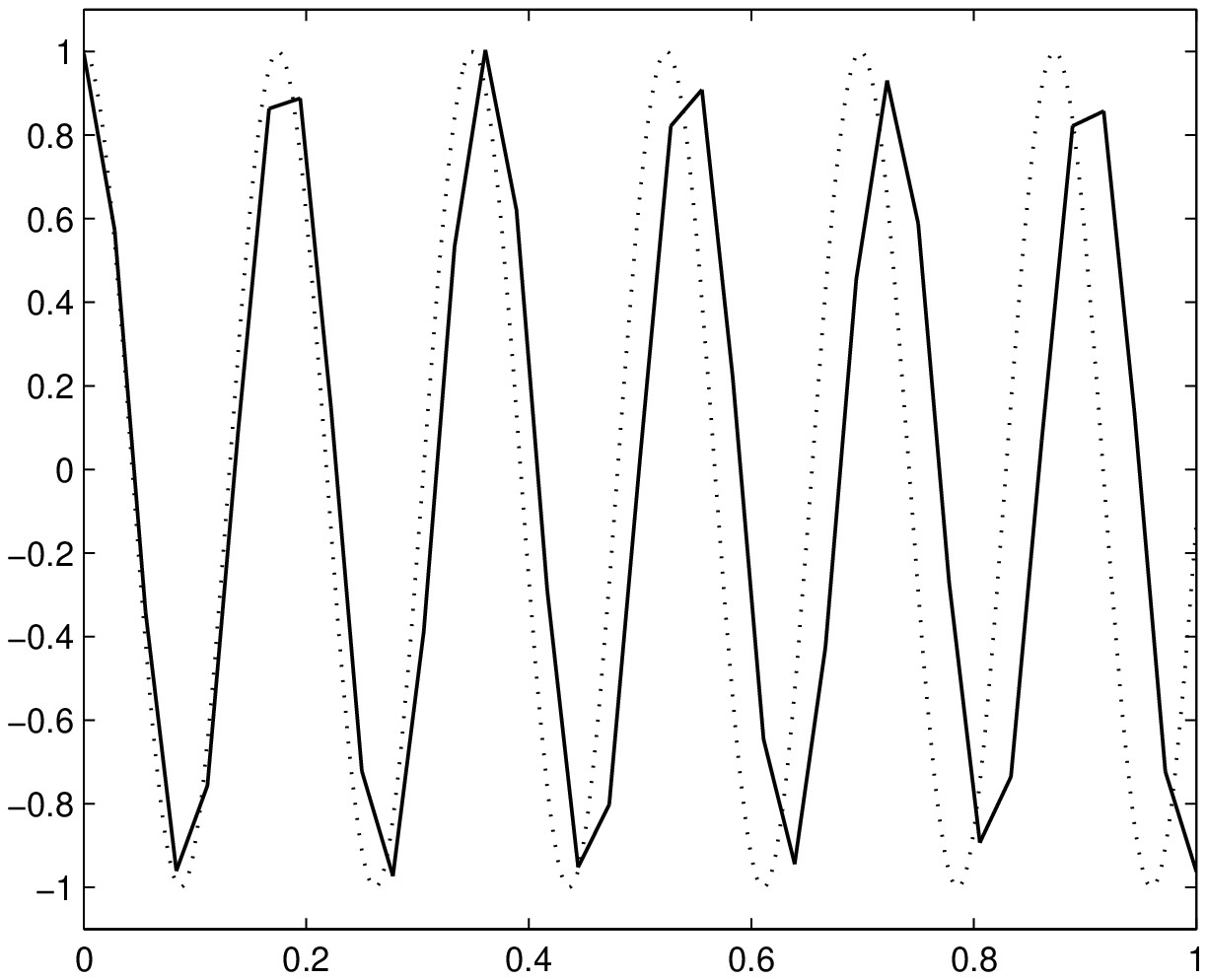}}
\caption{The real parts of $\bfE_{hx}(0.5,0.5,z)$  with parameters
given in \eqref{e7.8} (left, solid), $\bfE_{hx}^{\mathrm{EEM}}(0.5,0.5,z)$ 
(right, solid), and $\bfE_x(0.5,0.5,z)$ (dotted) for $k=36$ and $h=1/18$, $1/36$, 
respectively.} \label{fz}
\end{figure}

Table~\ref{table} shows the numbers of total DOFs needed for $50$\%
relative errors in $H(\curl)$-norm for the edge element interpolant, the 
IPDG solution  with parameters given in \eqref{e7.8}, and the edge element
solution, respectively.  The IPDG method needs less DOFs than the EEM does 
for $k\ge 10$ and much less for large wave number $k$. 
\begin{table}[hbt]
\centering
\begin{tabular}{|l|r|r|r|r|r|}
  \hline
    $k$ & 10 & 20 & 30 & 40 & 50 \\\hline
    Interpolation & 1,764& 12,168  & 33,048 & 79,488 & 141,288  \\\hline
    IPDG & 2,592 & 20,736 & 69,984 & 187,500 & 393,216 \\\hline    
    EEM & 2,688 & 45,600 & 249,900 & 876,408  & 2,398,488 \\\hline 
  \end{tabular}
\caption{Numbers of total DOFs needed for 50\% relative errors
in $H(\curl)$-norm for the edge element interpolant, the IPDG solution with 
parameters given in \eqref{e7.8}, and the edge element solution respectively.}
\label{table} 
\end{table}

\textbf{Acknowledgments.} The authors would like to thank Dr. Huangxin Chen 
of Xiamen University of China for his helpful suggestions on the construction 
and analysis of the $\bH(\curl,\Ome)$-elliptic projection in section \ref{sec-5.1}.


\appendix
\section{Proof of \eqref{e5.11f}}
The proof follows the same lines
as those given in \cite[pages 502--505]{HPSS05} and in \cite{Monk03,ZSWX09}. Let 
\[
U_h=\bigl\{ v_h\in H^1(\Om);\, v_h|_K\in P_2(K), \forall K\in\T \bigr\}
\] 
be the $H^1$-conforming linear finite element space. It follows from \eqref{e5.1} that $\bfE-\widetilde{\bfE}_h$ 
is discrete divergence-free, that is,
\[
\bigl(\bfE-\widetilde{\bfE}_h, \na \vp_h\bigr)_\Ome=0
\qquad\forall \vp_h\in U_h.  
\]
Notice that $\bfPhi_h^c\in \bfV_h\cap \bfH(\curl,\Ome)$, 
we have the following discrete Helmholtz decomposition of
$\bfPhi_h^c$:
\begin{align}
\bfPhi_h^c=\bfw_h+\na r_h,
\end{align}
where $r_h\in U_h$ and $\bfw_h\in \bfV_h\cap \bfH(\curl,\Ome)$ is also discrete 
divergence-free. It is easy to check that 
\begin{align*}
\norml{\na r_h}{\Om}\le\norml{\bfPhi_h^c}{\Om},
\qquad \norml{\bfw_h}{\Om}\ls \norml{\bfPhi_h^c}{\Om}.
\end{align*}
Then from  \cite[Lemma~7.6]{Monk03} and on noting that the domain $\Om$ is convex, there exists $\bfw\in \bfH^1(\Om)$ such that $\bfw\cdot\bn=\mathbf{0}$ on $\Ga$ and 
\begin{align}
&\curl\bfw=\curl\bfw_h,\quad \Div\bfw=0,\label{e6.8}\quad
 \norml{\bfw_h-\bfw}{\Om} 
\ls h\norml{\curl \bfPhi_h^c}{\Om}.
\end{align}
Thus, it follows from the identity 
\begin{align*}
\bigl(\bfE-\widetilde{\bfE}_h, \bfPhi_h^c \bigr)_\Ome
=\bigl(\bfE-\widetilde{\bfE}_h, \bfw_h \bigr)_\Ome
=\bigl(\bfE-\widetilde{\bfE}_h, \bfw_h-\bfw \bigr)_\Ome
+\bigl(\bfE-\widetilde{\bfE}_h, \bfw \bigr)_\Ome
\end{align*}
that
\begin{align}\label{e6.9}
-\frac{\bigl(\bfE-\widetilde{\bfE}_h, \bfPhi_h^c \bigr)_\Ome}{\norml{\bfE-\widetilde\bfE_h}{\Ome}}
&\le \norml{\bfw_h-\bfw}{\Om}+\norml{\bfw}{\Om}\\
&\ls h\norml{\curl \bfPhi_h^c}{\Om}+\norml{\bfw}{\Om}\nn.
\end{align}

The first term on the right-hand side of \eqref{e6.9} can be bounded as follows:
\begin{align}\label{e6.10}
&h\norml{\curl\bfPhi_h^c}{\Om}
=h\norml{\curl(\widehat{\bfE}_h-\bfE+\bfE-\widetilde\bfE_h
+\bfPhi_h- \bfPhi_h^c)}{\Om}\\
\le& h\|\widehat{\bfE}_h- \bfE\|_{\bH(\curl,\Om)}
+h\|\bfE-\widetilde\bfE_h\|_{DG}
+h\|\curl(\bfPhi_h-\bfPhi_h^c)\|_{\bL^2(\cT_h)}\ls h^2\cR(\bfE).\nn
\end{align}
where we have used \eqref{e5.6}, \eqref{e5.8c}, and \eqref{e5.11a} to derive the last inequality.

To estimate $\norml{\bfw}{\Om}$, we appeal to a duality argument to be
described next. Let $\bfz$ be the solution of the following auxiliary problem:
\begin{align}
\curl\curl\bfz+\bfz&=\bfw \quad\mbox{in }\Omega, \qquad\curl\bfz\times\bfnu=\mathbf{0} 
\quad\mbox{on }\Gamma:=\partial\Ome. \label{e:z}
\end{align}
Noting that $\Om$ is convex, the above problem attains a unique solution $\bfz\in \bH^1(\curl,\Om)$ and satisfies the following regularity estimate  (cf. \cite{HMP10,Monk03})
\begin{equation}\label{C_la}
\|\bfz\|_{\bH^1(\curl,\Ome)} \ls \|\bfw\|_{\bL^2(\Ome)}.
\end{equation} 

Define sesquilinear forms
\begin{align*}
A(\bfu,\bfv)&:=(\curl\bfu,\curl\bfv)_\Ome +(\bfu,\bfv)_\Ome \qquad\forall \bfu,\bfv\in \hat{\bcV},\\
A_h(\bfu,\bfv)&:=b_h(\bfu,\bfv)+(\bfu,\bfv)_\Ome \qquad\forall \bfu,\bfv\in \bfV.
\end{align*}
Let $\bfz_h^c\in \bfV_h\cap \bfH(\curl,\Om)$ and $\bfz_h\in \bfV_h$ denote the edge finite element approximation 
and the IPDG approximation to $\bfz$, respectively, that is,
\begin{align*}
A(\bfv_h,\bfz_h^c)&=(\bfv_h,\bfw)_\Ome 
\qquad\forall\bfv_h\in \bfV_h\cap \bfH(\curl,\Om),\\
A_h(\bfv_h,\bfz_h)&=(\bfv_h,\bfw)_\Ome \qquad\forall\bfv_h\in \bfV_h.
\end{align*}
It can be shown that there hold the following estimates (cf. \eqref{e5.6}):
\begin{align}\label{C1}
\|\bfz-\bfz_h^c\|_{\bH(\curl,\Ome)}  
&\ls h\, \|\bfz\|_{\bH^1(\curl,\Ome)} 
\ls h\, \norml{\bfw}{\Om}, \nn  \\
\norme{\bfz-\bfz_h^c},\quad\norme{\bfz-\bfz_h} &\ls h\,(1+\ga_1)^\frac12 \|\bfz\|_{\bH^1(\curl,\Ome)}
\ls h\,(1+\ga_1)^\frac12  \norml{\bfw}{\Om}. \nn
\end{align}

Since
\begin{align*}
\norml{\bfw}{\Om}^2&=A(\bfw,\bfz)=A(\bfw,\bfz-\bfz_h^c)+A(\bfw,\bfz_h^c), 
\end{align*} 
on noting that $\bfw, \bfw_h\in \bfH(\curl,\Ome)$, from \eqref{e6.8} and \eqref{e6.10}, we have
\begin{align*}
A(\bfw,\bfz-\bfz_h^c)&=A(\bfw-\bfw_h,\bfz-\bfz_h^c)
= (\bfw-\bfw_h,\bfz-\bfz_h^c)_\Ome\\
&\ls h\norml{\curl\bfPhi_h^c}{\Om}\, h\,\|\bfw\|_{\bL^2(\Om)}\ls h^3\cR(\bfE) \norml{\bfw}{\Om}.
\end{align*}
 On the other hand,
\begin{align*}
A(\bfw,\bfz_h^c)&=(\curl \bfw,\curl\bfz_h^c)_\Ome
+(\bfw,\bfz_h^c)_\Ome =\bigl(\curl \bfPhi_h^c,\curl\bfz_h^c\bigr)_\Ome
+(\bfw,\bfz_h^c)_\Ome 
 \db\\
&=A(\bfPhi_h^c,\bfz_h^c)
+\bigl(\bfw-(\bfw_h+\na r_h),\bfz_h^c\bigr)_\Ome=A(\bfPhi_h^c,\bfz_h^c)+(\bfw-\bfw_h,\bfz_h^c)_\Ome.
\end{align*}
From the definitions of $A, A_h$ and $b_h$, we get
\begin{align*}
&A(\bfPhi_h^c,\bfz_h^c)
=A_h(\bfPhi_h^c,\bfz_h^c)
+\i J_1(\bfPhi_h^c,\bfz_h^c)\\
&\quad
=A_h(\bfE-\widehat{\bfE}_h,\bfz_h^c)
+A_h(\widetilde\bfE_h-\bfE,\bfz_h^c)
+A_h(\bfPhi_h^c-\bfPhi_h,\bfz_h^c)
+\i J_1(\bfPhi_h^c,\bfz_h^c)\\
&\quad
=A_h(\bfE-\widehat{\bfE}_h,\bfz_h^c)
+A_h(\bfPhi_h^c-\bfPhi_h,\bfz_h^c)
+\i J_1(\bfPhi_h^c,\bfz_h^c-\bfz).
\end{align*}
Therefore
\begin{align*}
A(\bfw,\bfz_h^c)=&A_h(\bfE-\widehat{\bfE}_h,\bfz_h^c-\bfz)
+A_h(\bfE-\widehat{\bfE}_h,\bfz)+A_h(\bfPhi_h^c-\bfPhi_h,\bfz_h^c-\bfz+\bfz-\bfz_h)
\\
&+A_h(\bfPhi_h^c-\bfPhi_h,\bfz_h)+\i J_1(\bfPhi_h^c,\bfz_h^c-\bfz)
+(\bfw-\bfw_h,\bfz_h^c)_\Ome.
\end{align*}
Since
\begin{align*}
A_h(\bfE-\widehat{\bfE}_h,\bfz)=(\bfE-\widehat{\bfE}_h,\bfw)_\Ome,
\qquad 
A_h(\bfPhi_h^c-\bfPhi_h,\bfz_h)
=(\bfPhi_h^c-\bfPhi_h,\bfw)_\Ome,
\end{align*}
we have from Lemma~\ref{lem5.1} and the local trace inequality,
\begin{align*}
&A(\bfw,\bfz_h^c) \ls\norme{\bfE-\widehat{\bfE}_h} \norme{\bfz_h^c-\bfz}
+\norml{\bfE-\widehat{\bfE}_h}{\Om}\norml{\bfw}{\Om}\\
&\hskip 46pt+ \norme{\bfPhi_h^c-\bfPhi_h}
\big(\norme{\bfz_h^c-\bfz}+\norme{\bfz-\bfz_h}\big)
+\norml{\bfPhi_h^c-\bfPhi_h}{\Om} \norml{\bfw}{\Om}\\
&\hskip 46pt+\ga_1\norml{\curl\bfPhi_h^c}{\Om}
\big(\norml{\curl(\bfz_h^c-\bfz)}{\Om}+h\|\curl(\bfz_h^c-\bfz)\|_{H^1(\T)}\big)
\\
&\hskip 46pt+\norml{\bfw-\bfw_h}{\Om} \norml{\bfz_h^c}{\Om}\\
&\ls\norml{\bfw}{\Om}\Big(h\,(1+\ga_1)^\frac12  \norme{\bfE-\widehat{\bfE}_h}
+\norml{\bfE-\widehat{\bfE}_h}{\Om}+h\,(1+\ga_1)^\frac12  
\norme{\bfPhi_h^c-\bfPhi_h} \\
&\quad+\norml{\bfPhi_h^c-\bfPhi_h}{\Om}+\ga_1h\norml{\curl\bfPhi_h^c}{\Om}+h\norml{\curl \bfPhi_h^c}{\Om}\Big).
\end{align*}
Moreover, from \eqref{e5.11a}, \eqref{e5.11}, $\ga_0\gtrsim 1$,  and the local trace inequality, we get
\begin{align*}
\norme{
\bfPhi_h^c-\bfPhi_h}&\ls (1+\ga_1)^\frac12
\|\curl(\bfPhi_h-\bfPhi_h^c)\|_{\bL^2(\cT_h)}+\|\bfPhi_h\|_{DG}+\norml{\bfPhi_h^c-\bfPhi_h}{\Om}\\
&\ls (1+\ga_1)^\frac12 h\,\cR(\bfE).
\end{align*}
Thus, it follows from \eqref{e5.11}, \eqref{e6.10}, \eqref{e5.11a}, and the above estimate that
\begin{align*}
A(\bfw,\bfz_h^c) \ls &(1+\ga_1) h^2 \cR(\bfE)\norml{\bfw}{\Om}.
\end{align*}
Then we obtain the following estimates for $\norml{\bfw}{\Om}$:
\begin{align*}
\norml{\bfw}{\Om} \ls (1+\ga_1)h^2\cR(\bfE),
\end{align*}
which together with \eqref{e6.9} and \eqref{e6.10} implies that \eqref{e5.11f} holds.
The proof is complete.

\end{document}